\numberwithin{equation}{section}
\newtheorem{proposition}{Proposition}[section]
\newtheorem{lemma}[proposition]{Lemma}
\newtheorem{corollary}[proposition]{Corollary}
\newtheorem{theorem}[proposition]{Theorem}
\newtheorem{maintheorem}{Theorem}
\newtheorem{mainprop}[maintheorem]{Proposition}
\theoremstyle{definition}
\newtheorem{definition}[proposition]{Definition}
\newtheorem{remark}[proposition]{Remark}
\newtheoremstyle{qqq}
  {}   
  {}   
  {\slshape}  
  {0pt}       
  {\bfseries} 
  {.}         
  {5pt plus 1pt minus 1pt} 
  {}          
\theoremstyle{qqq}
\newtheorem{question}{Question}
\newtheorem{conjecture}[question]{Conjecture}
\begin{document}
\title{Mapping TASEP back in time}

\author[L. Petrov]{Leonid Petrov}
\address{L. Petrov, University of Virginia, Department of Mathematics,
141 Cabell Drive, Kerchof Hall,
P.O. Box 400137,
Charlottesville, VA 22904, USA,
and Institute for Information Transmission Problems,
Bolshoy Karetny per. 19, Moscow, 127994, Russia}
\email{lenia.petrov@gmail.com}

\author[A. Saenz]{Axel Saenz}
\address{A. Saenz, University of Virginia, Department of Mathematics,
141 Cabell Drive, Kerchof Hall,
P.O. Box 400137,
Charlottesville, VA 22904, USA
}
\email{saenzaxel@gmail.com}

\date{}

\begin{abstract}
	We obtain a new relation between the distributions $\upmu_t$ at
	different times $t\ge 0$ of the continuous-time TASEP (Totally Asymmetric Simple
	Exclusion Process) started from the step initial configuration.
	Namely, we present a continuous-time Markov process with local
	interactions and particle-dependent rates which maps the TASEP
	distributions $\upmu_t$ backwards in time. 
	Under the backwards process, particles jump to the left, and 
	the dynamics
	can be
	viewed as a version of the discrete-space Hammersley process.
	Combined with the forward TASEP evolution, this leads to a
	stationary Markov dynamics preserving $\upmu_t$
	which in turn brings new identities for expectations with respect to $\upmu_t$.
	
	The construction
	of the backwards dynamics is based on Markov maps interchanging
	parameters of Schur processes, and is motivated by bijectivizations
	of the Yang-Baxter equation. 
	We also present a number of corollaries, 
	extensions, and open questions 
	arising from our constructions.
\end{abstract}

\maketitle

\setcounter{tocdepth}{3}


\section{Introduction}

\subsection{TASEP}

The Totally Asymmetric Simple Exclusion Process
(TASEP)
is a prototypical stochastic model 
of transport in one dimension. Introduced around 
50 years ago in parallel in biology
\cite{macdonald1968bioASEP},
\cite{MacdonaldGibbsASEP1969}
and probability theory \cite{Spitzer1970},
it has been extensively studied
by a variety of methods.

TASEP is a continuous-time Markov process on 
the space of 
particle configurations in $\mathbb{Z}$ in which at most one
particle per site is allowed.
Each particle has an independent exponential 
clock of rate $1$ (that is, the random time~$T$ after 
which the clock rings is distributed
as 
$\mathrm{Prob}(T>s)=e^{-\uplambda s}$, $s>0$, 
where $\uplambda=1$ is the rate). When the 
clock rings, the particle jumps to the 
right by one if the destination is free of a particle.
Otherwise, the jump is blocked and nothing happens. 
See \Cref{fig:intro_TASEP} for an illustration.

\begin{figure}[htpb]
	\centering
	\includegraphics{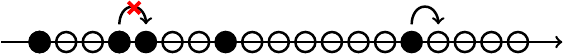}
	\caption{A forbidden jump (on the left) and a jump (on the right)
		in TASEP.}
	\label{fig:intro_TASEP}
\end{figure}

In this work we focus on
the process with 
the most well-studied initial condition
--- the step initial condition.
Under it,
the particles
initially occupy $\mathbb{Z}_{<0}$,
while $\mathbb{Z}_{\ge0}$ is free of particles.
Denote by $h(t,x)$ the TASEP interface 
(where $t\in \mathbb{R}_{\ge0}$,
$x\in \mathbb{Z}$), which is obtained
by placing a slope $+1$ or a slope $-1$ segment
over a hole or particle, respectively,
with the agreement that 
the step initial configuration corresponds to $h(0,x)=|x|$.
See \Cref{fig:TASEP_BHP} for an illustration.
We also denote the TASEP distribution at time $t$
(with step initial condition)
by $\upmu_t$.

It was shown by \cite{Rost1981}
(see also, e.g., 
\cite{johansson2000shape},
\cite[Chapter 4]{romik2015surprising}
for an alternative approach based on symmetric functions) 
that the interface grows linearly with time and tends to the limit shape,
under the \emph{hydrodynamic scaling} (i.e.~linear space and time scaling),
which is a parabola:
\begin{equation}
	\label{eq:TASEP_limit_shape}
	\frac{1}{L}\, h(\tau L,\varkappa L)
	\to
	\frac{\varkappa^2+\tau^2}{2\tau},
	\qquad L\to+\infty,
\end{equation}
where $\varkappa$ and $\tau$ are scaled space and time,
and $|\varkappa|\le \tau$.

In the past 20 years, starting with \cite{johansson2000shape},
much finer results about asymptotic behavior
of TASEP have become available through the tools of  
Integrable Probability
(cf. 
\cite{BorodinGorinSPB12},
\cite{BorodinPetrov2013Lect}).
This asymptotic analysis revealed that 
TASEP belongs to the 
(one-dimensional) Kardar-Parisi-Zhang 
(KPZ)
universality class
\cite{CorwinKPZ},
\cite{QuastelSpohnKPZ2015}.
In particular, 
the TASEP interface at time $L$, on the
horizontal $L^{2/3}$ and vertical $L^{1/3}$ scales,
converges to the Airy$_2$ process,
which is the top line of the 
Airy$_2$ line ensemble 
(about the latter see, e.g., \cite{corwin2014brownian}).
Furthermore, computations with TASEP allow 
to formulate general predictions for all one-dimensional
systems in the KPZ class (e.g., see \cite{Ferrari_Airy_Survey}, \cite{Spohn2012}).
The progress in understanding 
multitime 
asymptotics of the TASEP interfaces
is rapidly 
advancing at present
(see \Cref{rmk:finer_scaling} for 
references to recent results).

\subsection{The backwards dynamics}
\label{sub:intro_BHP}

The goal of our work is to present a 
new surprising property of the 
family of TASEP distributions $\left\{ \upmu_t \right\}_{t\ge0}$.
We show that the distributions $\upmu_t$
are coupled 
in the reverse time direction 
by a time-homogeneous Markov process with 
local interactions 
(the interaction strength 
depends on the location in the system).
Let us now describe this backwards dynamics.

Denote by $\mathcal{C}$ 
the (countable) space of 
configurations on $\mathbb{Z}$ 
which differ from the step 
configuration 
by finitely many TASEP 
jumps.\footnote{In 
other words, $\mathcal{C}$ consists of 
configurations 
$\{x_1>x_2>x_3>\ldots \}\subset\mathbb{Z}$
which possess a rightmost particle $x_1$,
and such that 
$x_N=-N$
for all $N$ large enough.}
Consider the continuous-time Markov chain on $\mathcal{C}$
which evolves as follows. At each hole
there is an independent exponential
clock whose rate is equal to the number of particles 
to the right of this hole.
When the clock at a hole rings, 
the leftmost of the particles that are to the right of the hole
instantaneously jumps into this hole
(in particular, the particles almost surely jump to the left).
See \Cref{fig:intro_BHP} for an illustration 
or \eqref{eq:BHP_gen} for a description of the generator.
Note that, for configurations in $\mathcal{C}$,
almost surely at most one particle can move
at any time moment 
because there are only finitely many holes with 
nonzero rate.

\begin{figure}[htpb]
	\centering
	\includegraphics{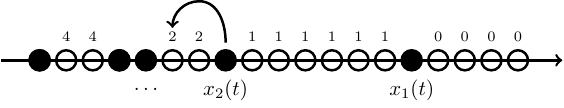}
	\caption{An illustration of the backwards 
	process. Jump rates attached to holes
	and a possible jump are indicated.}
	\label{fig:intro_BHP}
\end{figure}

The jumping mechanism described above 
has the following features:
\begin{itemize}
	\item 
		gaps attract neighboring particles 
		from the right;
	\item 
		the rate of attraction is proportional to the
		size of the gap;
	\item 
		the jumping particle
		lands inside the gap uniformly at random.
\end{itemize}
The same features 
of the jumping mechanism
appear in the well-known 
continuous-space
Hammersley process
\cite{hammersley1972few}, \cite{aldous1995hammersley},
and the discrete-space Hammersley process
\cite{ferrari1996}, \cite{FerrariMartin2005}.
For this reason
we call our Markov process 
(which evolves in the discrete space)
the \emph{backwards Hammersley-type
process}, or BHP, for short.
Note that compared to the well-known continuous-space 
Hammersley
process, our BHP is \emph{space-inhomogeneous}: the 
jump rate at a hole also depends on the number
of particles to the right of it.
The evolutions of the interface under TASEP and the BHP 
are given in \Cref{fig:TASEP_BHP}.

Let $\{\mathbf{L}_\tau\}_{\tau\in \mathbb{R}_{\ge0}}$ be
the Markov semigroup of the BHP defined
in \Cref{sub:intro_BHP}. That is, 
$\mathbf{L}_\tau(\mathbf{x},\mathbf{y})$,
$\mathbf{x}, \mathbf{y}\in \mathcal{C}$,
is the probability that the particle configuration
is $\mathbf{y}$ at time $\tau$ 
given that it started at $\mathbf{x}$ at time $0$
(here we use the fact that BHP is time-homogeneous).

\begin{remark}
	The backwards process is well-defined. Indeed, 
	for each initial condition $\mathbf{x}\in \mathcal{C}$
	of the backwards process, the set of its possible further states 
	is finite. Therefore, the probability
	$\mathbf{L}_{\tau}(\mathbf{x},\mathbf{y})$
	for any 
	$\mathbf{x},\mathbf{y}\in \mathcal{C}$
	is well-defined (and can be obtained by exponentiating the corresponding
		finite-size
	piece of the BHP jump matrix).
\end{remark}

\subsection{Main result}

Recall that $\upmu_t$ is the 
distribution of the TASEP configuration
at time $t$ (with the step initial condition).
The measure $\upmu_t$ is supported on the space
$\mathcal{C}$ for all $t\ge0$.

\begin{maintheorem}
	\label{thm:intro_action_on_TASEP}
	The BHP maps the TASEP distributions backwards in time.
	That is,
	for any $t,\tau\in \mathbb{R}_{\ge0}$, we have
	\begin{equation}
		\label{eq:intro_TASEP_short}
		\upmu_t \,\mathbf{L}_\tau=\upmu_{\,e^{-\tau}t}.
	\end{equation}
	In detail, this identity means that
	for any $\mathbf{x}\in \mathcal{C}$ 
	we have
	\begin{equation*}
		\sum_{\mathbf{y}\in\mathcal{C}}
		\upmu_{t}(\mathbf{y})
		\,
		\mathbf{L}_\tau(\mathbf{y},\mathbf{x})
		=
		\upmu_{\,e^{-\tau}t}(\mathbf{x}).
	\end{equation*}
\end{maintheorem}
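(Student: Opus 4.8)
The plan is to identify both sides of \eqref{eq:intro_TASEP_short} with marginals of Schur processes and to realize the BHP as a composition of elementary Markov maps that interchange the parameters of such a process. Recall that the TASEP distribution $\upmu_t$ with step initial condition, encoded through the positions of particles (equivalently through the interface $h(t,\cdot)$), coincides with a marginal of the Schur measure with one ``time'' parameter $t$ appearing in the specialization of the skew Schur functions; concretely, $\upmu_t$ is the pushforward of a Schur process built from a finite piece with parameter governed by $t$. Viewing $\upmu_{e^{-\tau}t}$ as the same Schur-type measure with the parameter $t$ replaced by $e^{-\tau}t$, the content of the theorem is that conjugating this change of parameter through the RSK-type correspondence yields precisely the local jump rates of the BHP: a hole with $k$ particles to its right carries rate $k$, and the jumping particle lands uniformly in the gap.

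The key steps, in order, are: (1) recall from the earlier sections the bijectivization of the Yang--Baxter equation, which produces a sampling algorithm for Schur processes in which moving a parameter past a ``cross'' vertex is implemented by an explicit local stochastic move on the associated Gelfand--Tsetlin pattern / lattice path; (2) specialize this to the relevant rank-one situation and read off that the induced move on the TASEP configuration is exactly one BHP jump, with the combinatorial factors producing the rate $=$ (number of particles to the right of the hole) and the uniform landing law — this is where the three bulleted features of the jumping mechanism get matched to the Schur combinatorics; (3) assemble infinitely many such elementary moves into the continuous-time generator: multiplying the parameter by $e^{-\tau}$ corresponds to running these local moves for total ``Hammersley time'' $\tau$, and one checks the resulting jump process is well defined on $\mathcal{C}$ (which is immediate since each state in $\mathcal{C}$ has only finitely many holes of nonzero rate, as already noted in the Remark); (4) take $\tau_1,\tau_2$ and use $e^{-\tau_1}e^{-\tau_2}=e^{-(\tau_1+\tau_2)}$ to confirm the semigroup property $\mathbf{L}_{\tau_1}\mathbf{L}_{\tau_2}=\mathbf{L}_{\tau_1+\tau_2}$ is consistent with the identity, then conclude \eqref{eq:intro_TASEP_short} by differentiating at $\tau=0$: it suffices to check that the BHP generator $\mathsf{A}$ satisfies $\upmu_t\,\mathsf{A} = -t\,\frac{d}{dt}\upmu_t$ as a linear functional identity on $\mathcal{C}$, which reduces to a finite computation for each fixed state.

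The main obstacle I expect is Step (2): extracting from the bijectivization the precise statement that the elementary parameter-swap move, after projecting from the full Schur process (with its extra ``history'' coordinates from the RSK filling) down to the single TASEP slice, collapses to the clean local rule ``rate $k$, land uniformly,'' with no leftover dependence on the hidden coordinates. This requires showing that the relevant conditional distributions of the Schur process are the right ones for the projection to be Markov in the TASEP variable alone — essentially an intertwining/commutation argument between the parameter-swap kernel and the projection map. Once that is in place, Steps (3)–(4) are bookkeeping: verifying summability and exchanging the sum over $\mathcal{C}$ with the generator (justified by the finiteness in the Remark), and matching the scalar $-t\,d/dt$ against the $e^{-\tau}$ scaling. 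A secondary technical point is to make sure the one-parameter family $t\mapsto \upmu_t$ is differentiable in the appropriate (e.g.\ pointwise on the countable set $\mathcal{C}$) sense so that the generator-level identity indeed integrates back up to \eqref{eq:intro_TASEP_short}; this follows from the explicit Schur/Fredholm-determinant formulas for $\upmu_t$ but should be stated carefully.
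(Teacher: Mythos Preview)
Your proposal has the right conceptual backbone --- Schur processes plus parameter-swapping Markov maps from bijectivization --- but it is missing the paper's central technical device, and without it Step~(2) cannot be carried out as stated.

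The issue is this: the TASEP measure $\upmu_t$ corresponds to the Schur process with \emph{all spectral parameters equal} (namely $\vec c=(1,1,1,\ldots)$). Swapping two equal spectral parameters is the identity map, so there is no nontrivial elementary move to extract the BHP from directly at this level. The paper instead introduces an auxiliary parameter $q\in(0,1)$ and works with the $q$-deformed TASEP (particle $k$ has speed $q^{k-1}$), whose Schur process has spectral parameters $(1,q,q^2,\ldots)$. Now the swaps are nontrivial: the composite map $\mathbb{L}^{(q)}=L_q^{(1)}L_{q^2}^{(2)}L_{q^3}^{(3)}\cdots$ pushes the bottom parameter $1$ past all the others, and after rescaling one obtains the \emph{discrete} identity $\upmu_t^{(q)}\mathbf{L}^{(q)}=\upmu_{qt}^{(q)}$ (Theorem~\ref{thm:action_on_q_geom_TASEP}). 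Iterating $T$ times and sending $q=e^{-\varepsilon}\to1$, $T=\lfloor\tau/\varepsilon\rfloor$, gives the BHP as a Poisson-type limit. The inhomogeneous rate ``$k$ per hole'' for particle $x_k$ is exactly the $\varepsilon\to0$ expansion $1-q^k=k\varepsilon+O(\varepsilon^2)$ of the truncated-geometric parameter at level $k$; this is where the rate comes from, and your outline has no mechanism producing it.

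Two smaller points. First, the projection-to-leftmost-particles step you flag as the main obstacle is actually immediate here: by construction each $L_{q^j}^{(j)}$ moves $\lambda_j^{(j)}$ using only $\lambda_{j+1}^{(j+1)}$ from the level above, so the restriction of $\mathbb{L}^{(q)}$ to $\{\lambda_N^{(N)}\}_{N\ge1}$ is Markov with no intertwining argument needed. Second, the paper does not prove the theorem by checking the generator identity $\upmu_t\,\mathsf{A}=-t\,\tfrac{d}{dt}\upmu_t$ and integrating; it establishes the exact discrete-$q$ relation and passes to the limit (using a tightness Lemma~\ref{lemma:finite_part_of_C} to localize to finite subsets of $\mathcal{C}$). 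Your generator route could in principle work as an alternative endgame, but only after you have the $q$-deformation to supply the rates.
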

\begin{figure}[p]
	\centering
	\vspace{10pt}
	\boxed{\includegraphics[width=\textwidth]{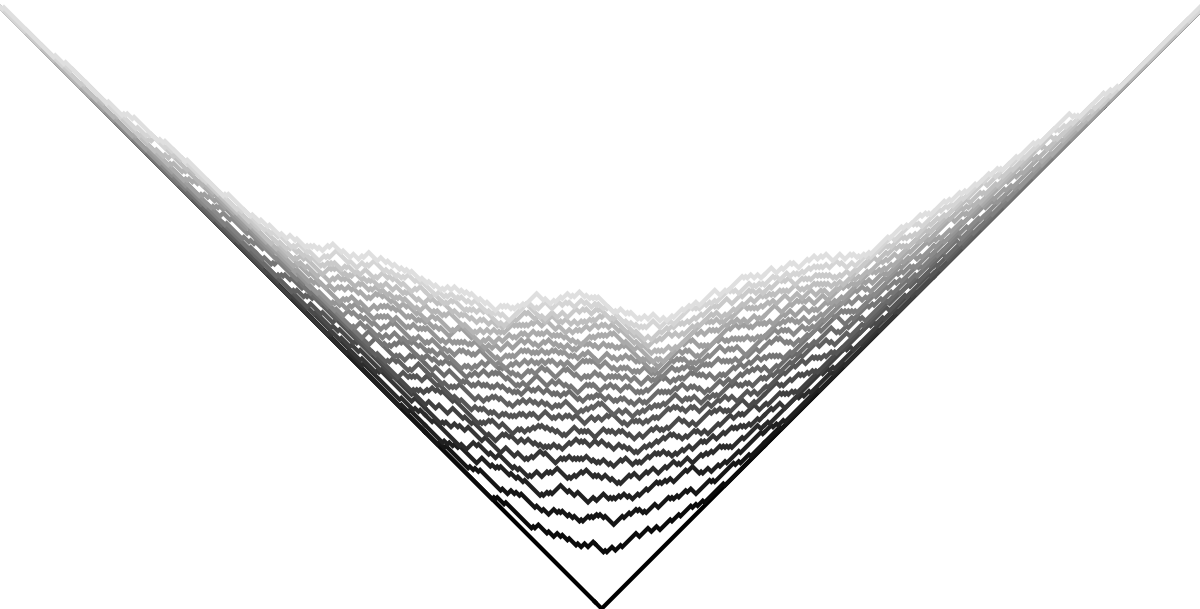}}

	\vspace{20pt}
	\boxed{\includegraphics[width=\textwidth]{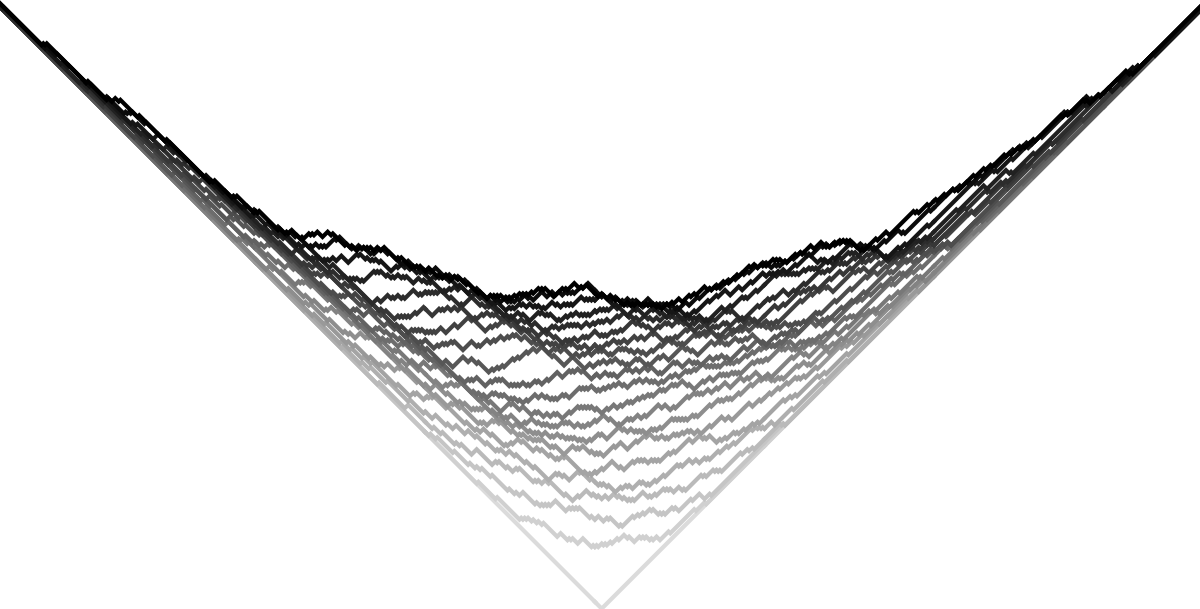}}
	\vspace{10pt}
	\caption{An illustration of the TASEP interface
	growth (top) and the interface decay under the backwards dynamics 
	(bottom). In both pictures, lighter curves are
	the interfaces at later times. One can see that the 
	TASEP evolution is symmetric about 
	the vertical axis, while the 
	backwards dynamics is not symmetric.
	Because of this asymmetry, there are 
	in fact two backwards processes --- one focusing on 
	holes and the other focusing on particles. We
	only consider one of them in the present work.}
	\label{fig:TASEP_BHP}
\end{figure}
As $\tau\to+\infty$, the right-hand side of 
\eqref{eq:intro_TASEP_short} becomes $\upmu_0$, which is the 
delta measure on the step configuration.
This agrees with the observation 
that for any $\mathbf{x}\in \mathcal{C}$ we 
have\footnote{Throughout
the paper
$\mathds{1}_{E}$ stands for the indicator function
if the event $E$.}
\begin{equation*}
	\lim_{\tau\to+\infty}\mathbf{L}_\tau(\mathbf{x},\mathbf{y})=
	\mathds{1}_{\mathbf{y}=\textnormal{step configuration}}.
\end{equation*}

\Cref{thm:intro_action_on_TASEP} leads to a stationary
Markov dynamics on the TASEP measure $\upmu_t$
(it is discussed in
\Cref{sub:intro_equil} below). In particular, this stationary
dynamics brings new identities for expectations with respect
to $\upmu_t$. One of these identities
is given in \Cref{cor:microscopic_equation}.

The simulation depicting the TASEP evolution
from the step initial configuration to $t=350$, 
and then the action of the BHP on this interface is 
available online 
\cite{PetrovLi2019simul}.
The interfaces in 
\Cref{fig:TASEP_BHP}
are snapshots of this simulation.

\subsection{Remark. Reversal of Markov processes}
\label{sub:intro_Markov_reversal}

Before discussing the strategy of the proof
of \Cref{thm:intro_action_on_TASEP}
let us mention that TASEP, 
like any Markov chain (under certain technical
assumptions), can be 
reversed in time, and its reversal is again a Markov chain
---
but usually
time-inhomogeneous and quite complicated.

For TASEP,
let $\{\mathbf{T}_t\}_{t\in \mathbb{R}_{\ge0}}$
be its Markov semigroup.
Defining 
\begin{equation*}
	\mathbf{T}^{rev}_{t,s}(\mathbf{x},\mathbf{y})
	=
	\frac{\upmu_s(\mathbf{y})}{\upmu_t(\mathbf{x})}
	\,
	\mathbf{T}_{t-s}(\mathbf{y},\mathbf{x})
	,\qquad 
	t>s,
\end{equation*}
we see that $\mathbf{T}^{rev}$ also maps
the TASEP distributions back in time:
$\upmu_t \mathbf{T}^{rev}_{t,s}=\upmu_s$, $s<t$.
In other words, the probabilities $\mathbf{T}^{rev}$
come from the time-reversal of the TASEP conditional distributions.
The Markov process
corresponding to $\{\mathbf{T}^{rev}_{t,s}\}$
is time-inhomogeneous, and its interactions are substantially
nonlocal.
\Cref{thm:intro_action_on_TASEP} 
implies that
the BHP $\{\mathbf{L}_\tau\}$
is a \emph{different}, much more natural, Markov process
which maps the TASEP distributions back in time.

By a different
mapping of the distributions
we mean the following. 
One can check that the joint distribution
of the TASEP configuration at two times $e^{-\tau}t$ and $t$
differs from the joint distribution of $(\mathbf{x},\mathbf{y})$,
where $\mathbf{y}$ is distributed as $\upmu_t$, 
and $\mathbf{x}$ is obtained from $\mathbf{y}$ by running
the BHP process $\mathbf{L}_{\tau}$.

\subsection{Idea of proof of \Cref{thm:intro_action_on_TASEP}}
\label{sub:intro_idea_of_proof}

We prove 
\Cref{thm:intro_action_on_TASEP} in 
\Cref{sec:Markov_maps,sec:action_on_q_Gibbs,sec:limit_q_1}.
Here let us outline the main steps.

First, we modify the problem by introducing an 
extra parameter $q\in(0,1)$,
and consider the TASEP
in which the $k$-th particle from the right, 
$k\in \mathbb{Z}_{\ge1}$, 
has the jump rate $q^{k-1}$.\footnote{We
emphasize that this 
$q$-version of the TASEP
should not be confused with
the 
$q$-TASEP of \cite{SasamotoWadati1998}, \cite{BorodinCorwin2011Macdonald}.} 
Let the distribution at time $t$ of this TASEP
(with step initial configuration)
be denoted by $\upmu_t^{(q)}$.

\begin{figure}[ht]
	\centering
	\includegraphics{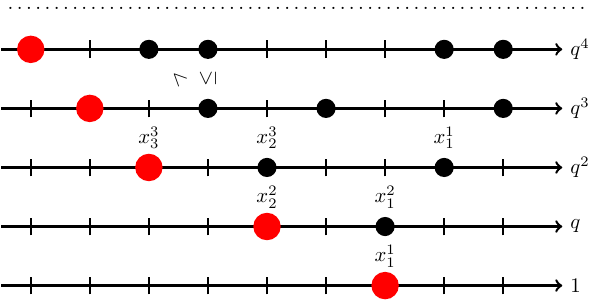}
	\caption{A configuration $\{x^j_i\}$ 
		in $\mathbb{Z}\times \mathbb{Z}_{\ge1}$.
		The leftmost (marked) particles are identified
		with TASEP. The interlacing condition 
		$x^{j+1}_{i+1}< x^{j}_{i}\le x^{j+1}_{i}$
		holds throughout the configuration.}
	\label{fig:intro_interlacing}
\end{figure}

Second, we use 
the well-known mapping of the TASEP to 
\emph{Schur processes}.
Schur processes 
\cite{okounkov2003correlation}
(and their various generalizations
including the Macdonald processes
\cite{BorodinCorwin2011Macdonald})
are one of the central 
tools in Integrable Probability.
The particular Schur 
processes we employ are probability distributions
on particle configurations
$\{x^j_i\}_{1\le i\le j}$
in $\mathbb{Z}\times \mathbb{Z}_{\ge1}$
which satisfy
an interlacing condition,
see \Cref{fig:intro_interlacing}.

There exists a Schur process
(depending on $q$ 
and the time parameter $t\in \mathbb{R}_{\ge0}$)
under which the joint distribution
of
the leftmost particles 
$\{x^N_N\}_{N\in \mathbb{Z}_{\ge1}}$
in each horizontal row
is the same as of the $q$-dependent
TASEP particles $x_1(t)>x_2(t)>\ldots$
(i.e., this is 
$\upmu_t^{(q)}$).
This mapping between TASEP and Schur processes
is described in \cite{BorFerr2008DF}, 
but also follows from earlier constructions
involving the Robinson-Schensted-Knuth correspondence.
We recall the details in \Cref{sec:Schur_and_TASEP}.

This Schur process corresponding to 
$\upmu_t^{(q)}$ depends on $q$ via the
\emph{spectral parameters} $1,q,q^2,\ldots $ attached to the 
horizontal lines
(as indicated in \Cref{fig:intro_interlacing}).
The new ingredients we
bring to Schur processes 
are \emph{Markov maps} interchanging two neighboring
spectral parameters (say, the $j$-th and the $(j+1)$-th). 
By a Markov map we mean 
a way to randomly modify the interlacing particle configuration
in $\mathbb{Z}\times \mathbb{Z}_{\ge1}$
such that:
\begin{itemize}
	\item At the $j$-th horizontal level the particles almost
		surely jump to the left;
	\item All other levels are untouched;
	\item The interlacing conditions
		are preserved;
	\item If the starting configuration
		was distributed as a Schur process,
		then the resulting configuration
		is distributed as a modified Schur process with 
		the $j$-th and the $(j+1)$-th
		spectral parameters interchanged.
\end{itemize}
We refer to this as the ``L Markov map''
since it moves particles to the left 
(it has a counterpart, the ``R Markov map'', but 
we do not need it for the main result).
The L Markov 
map at each $j$-th level
depends only on the ratio of the spectral parameters
being interchanged.

Combining the L Markov maps in such a way that they interchange
the bottommost spectral parameter~$1$ 
with $q$, then with $q^2$, then with $q^3$, and so on, 
we can move this parameter $1$ to infinity, where it 
``disappears'' (see \Cref{fig:braid_q} below
for an illustration). 
The 
resulting distribution of the configuration
will again
be a Schur process with the same 
spectral parameters $(1,q,q^2,\ldots )$,
but with the modified time parameter, $t\mapsto qt$.
Here we use the fact that the measure
does not change under the simultaneous 
rescaling of the spectral parameters.

Considering the action of this combination of the L Markov 
maps on the leftmost particles
$\{x_N^N\}$, we arrive at an explicit Markov transition
kernel on $\mathcal{C}$, denoted by $\mathbf{L}^{(q)}$,
with the property that 
(this is \Cref{thm:action_on_q_geom_TASEP} below)
\begin{equation*}
	\upmu_t^{(q)}\,\mathbf{L}^{(q)}=
	\upmu_{qt}^{(q)}
	\qquad \textnormal{for all $t\in \mathbb{R}_{\ge0}$}.
\end{equation*}
Finally,
iterating the action of $\mathbf{L}^{(q)}$
and taking the limit as $q\to1$, we arrive at 
\Cref{thm:intro_action_on_TASEP}.

\subsection{\texorpdfstring{``}{"}Toy\texorpdfstring{''}{"} example. Coupling of Bernoulli random walks}
\label{sub:intro_Bernoulli_coupling}

The Schur process computations 
leading to \Cref{thm:intro_action_on_TASEP}
have an
elementary consequence
which we now describe.
Its connection to Schur processes is detailed in 
\Cref{sub:back_TASEP_and_branching_graph}.

\begin{figure}[htpb]
	\centering
	\includegraphics{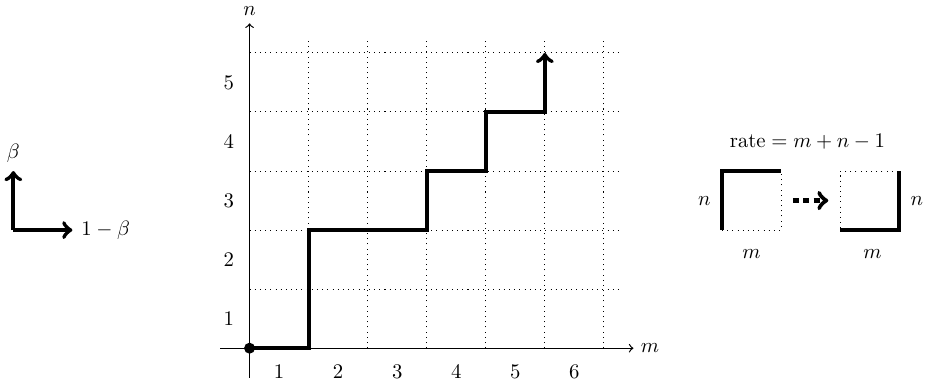}
	\caption{Left: Probabilities in the Bernoulli
	random walk. Center: A sample trajectory
	of the Bernoulli random walk. 
	Right: Local step of the process 
	$\mathbf{D}_\tau$.}
	\label{fig:intro_Bernoulli}
\end{figure}

Fix $\beta\in(0,1)$, and let 
$\mathsf{b}_\beta$ be the distribution of the simple random
walk in the quadrant $\mathbb{Z}_{\ge0}^2$, 
under which the walker 
starts at $(0,0)$ and
goes up with probability $\beta$ 
and to the right with probability $1-\beta$, independently
at each step. 

Consider the continuous-time Markov process on the space
of random walk trajectories under which each 
$(up, right)$ local piece 
is independently
replaced by the $(right,up)$ piece
at rate $m+n-1$,
where $(m,n)\in \mathbb{Z}_{\ge1}$ 
are the coordinates of the local piece.
See \Cref{fig:intro_Bernoulli} for an illustration.
Clearly, in each triangle $\left\{ m+n\le K \right\}$,
almost surely 
at each time moment there is at most one change of the trajectory.
Moreover, for different $K$ these processes are compatible, so 
by the Kolmogorov extension theorem
they indeed define a 
continuous-time Markov process on the full space
of random walk trajectories.
Denote the resulting Markov 
semigroup by 
$\{\mathbf{D}_\tau\}_{\tau\in\mathbb{R}_{\ge0}}$.

\begin{mainprop}
	\label{prop:Bernoulli_random_walk}
	For any $\beta\in(0,1)$ and $\tau\ge0$ we have
	\begin{equation*}
		\mathsf{b}_\beta\,\mathbf{D}_\tau=\mathsf{b}_{\beta(\tau)},
		\quad \text{where}\quad
		\beta(\tau)=\frac{\beta e^{-\tau}}{1-\beta+\beta e^{-\tau}}.
	\end{equation*}
\end{mainprop}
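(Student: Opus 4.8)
The plan is to prove \Cref{prop:Bernoulli_random_walk} by a direct generating-function / transfer-matrix computation, tracking how the measure $\mathsf{b}_\beta$ transforms under the infinitesimal generator of $\{\mathbf{D}_\tau\}$. First I would encode a random walk trajectory by the sequence of heights at which horizontal steps are taken; equivalently, by the positions of the ``up'' steps along the time axis. Because the walk is infinite but each initial segment $\{0,\dots,m\}\times\mathbb Z$ sees only finitely many reorderings, it suffices to prove the identity for the pushforward to each such segment (the partition function of $\mathsf{b}_\beta$ restricted to a finite region is a product over heights $n$ of factors $\beta$ for each up-step and $1-\beta$ for each right-step). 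Under $\mathbf{D}_\tau$ a local $(up,right)$ corner at height $n$ becomes a $(right,up)$ corner at rate $n$: this swap moves one up-step from ``just after a right-step at height $n$'' to ``just before it,'' i.e. it advances the up-step past a right-step located at height $n$, which changes the height of that up-step from $n$ to $n+1$ — wait, more precisely it is the horizontal step that changes its height. I would set up the bookkeeping so that the relevant statistic is the total number of $(up,right)$ inversions weighted by height, and show the generator acts diagonally on the family $\{\mathsf{b}_\gamma\}$.

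The cleanest route is to verify the ODE $\frac{d}{d\tau}\bigl(\mathsf b_\beta \mathbf D_\tau\bigr)=\mathsf b_\beta \mathbf D_\tau \,\mathsf A$ where $\mathsf A$ is the generator of $\mathbf D$, and check that the one-parameter curve $\tau\mapsto \mathsf b_{\beta(\tau)}$ with $\beta(\tau)=\beta e^{-\tau}/(1-\beta+\beta e^{-\tau})$ satisfies the same ODE with the same initial condition $\mathsf b_\beta$ at $\tau=0$. Concretely, fix a finite target trajectory $w$ in a segment $\{0,\dots,m\}\times\mathbb Z$; then $\mathsf b_\gamma(w)=\gamma^{U}(1-\gamma)^{R}$ where $U,R$ are the numbers of up/right steps (these are determined by the endpoint, independent of $w$), so I need $\sum_{w'} \mathsf b_\gamma(w')\,\mathsf A(w',w)$ to equal $\frac{d}{d\tau}\big|_{\beta(\tau)=\gamma}\mathsf b_{\beta(\tau)}(w)\cdot(\text{correct }\tfrac{d\gamma}{d\tau})$. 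The quantity $\beta(\tau)$ satisfies $\dot\gamma=-\gamma(1-\gamma)$, so the right-hand side is $\bigl(U(1-\gamma)-R\gamma\bigr)\gamma^{U}(1-\gamma)^{R}\cdot(-1)$ — I would match this against the incoming/outgoing rates: each $(right,up)$ corner of $w$ at height $n$ receives probability flux from the $w'$ obtained by the reverse swap (an $(up,right)$ corner at height $n$), contributing $+n$ times a ratio of $\mathsf b_\gamma$ weights equal to $\gamma/(1-\gamma)$ or its reciprocal depending on how the heights shift; and $w$ loses mass at total rate $\sum n$ over its own $(up,right)$ corners. Summing these over all corners telescopes: $\sum_n n\cdot(\#\text{down-crossings at height }n) - \sum_n n\cdot(\#\text{up-crossings})$ collapses to a boundary term expressible through $U$ and $R$, which is exactly the needed linear combination.

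The main obstacle I anticipate is the combinatorial identity in the last step: showing that the signed, height-weighted sum of corner contributions equals $U(1-\gamma)-R\gamma$ (up to the overall $\mathsf b_\gamma(w)$ factor) for \emph{every} trajectory $w$, not just on average. This requires a careful Abel-summation / telescoping argument over the heights $0,1,\dots,m$, using that at height $n$ the trajectory crosses horizontally some number of times and that the alternation of up- and right-steps forces the corner counts at adjacent heights to interlock. Once this is established, uniqueness of solutions to the linear ODE on each finite segment (the state space there is finite, so $\mathbf D_\tau$ restricted to it is a genuine finite Markov semigroup) finishes the proof, and compatibility across $m$ plus Kolmogorov extension upgrades it to the full trajectory space. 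An alternative, possibly slicker, route — which I would keep in reserve — is to recognize $\mathbf D_\tau$ as the $q\to$ (geometric) specialization of the L Markov map of \Cref{sub:intro_idea_of_proof} acting on a single spectral-parameter swap, in which case the proposition is a corollary of \Cref{thm:action_on_q_geom_TASEP}; but since the statement is advertised as provable by elementary means, I would present the self-contained generator computation as the primary proof.
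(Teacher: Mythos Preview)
Your proposal is a genuinely different route from the paper's. The paper does not give a self-contained proof of \Cref{prop:Bernoulli_random_walk} at all: it simply remarks in \Cref{sub:intro_Bernoulli_coupling} that the statement can be proved elementarily, and in \Cref{sub:back_TASEP_and_branching_graph} observes that it is the special case $\beta_1^+=\beta$ (all other parameters zero) of \Cref{thm:L_action_q_to_1_gibbs_general}. So the paper's ``proof'' is exactly your backup plan --- inherit it from the Schur machinery. Your primary plan, a direct forward-equation computation, is the elementary proof the paper alludes to but does not carry out.

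That said, your setup has a real gap. You assert that on a finite segment ``the state space is finite, so $\mathbf{D}_\tau$ restricted to it is a genuine finite Markov semigroup,'' and simultaneously that $\mathsf b_\gamma(w)=\gamma^U(1-\gamma)^R$ with $U,R$ fixed by the endpoint. These two claims are in tension: if the segment is ``the first $K$ steps,'' the projection is \emph{not} Markov (whether step $K$ swaps depends on step $K{+}1$); if instead the segment is ``the heights $h_0\le h_1\le\dots\le h_m$ of the first $m{+}1$ right-steps,'' then the projection \emph{is} Markov but the total height $U=h_m$ is not conserved --- the boundary swap at column $m$ decreases it. It is precisely this boundary term, with its $\gamma/(1-\gamma)$ weight ratio, that produces the $\gamma$-dependence you need; the internal swaps alone leave $\mathsf b_\gamma$ invariant (since they preserve $U,R$). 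Your telescoping sketch (``down-crossings'' vs.\ ``up-crossings'') does not make sense for a monotone up-right path and will not produce the required identity. The correct bookkeeping is: work with the Markov state $(h_0,\dots,h_m)$; the internal in/out fluxes cancel by an Abel-summation over the strict-increase indices, and the single boundary flux at $i=m$ gives exactly $(m{+}1)\gamma-(1-\gamma)h_m$, matching $\dot\gamma\,\partial_\gamma\log[\gamma^{h_m}(1-\gamma)^{m+1}]$ with $\dot\gamma=-\gamma(1-\gamma)$. Once you reorganize around this, the argument goes through; as written, the segment/endpoint confusion would cause it to fail.
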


The action of $\mathbf{D}_{\tau}$ decreases the parameter
$\beta$ and almost surely moves the 
trajectory closer to the $m$ (horizontal) axis.
By symmetry, one can also define a continuous-time 
Markov chain which moves the vertical pieces
of the trajectory to the left, and 
increases the parameter $\beta$.
It could be interesting to look at the
stationary dynamics ---
a combination of the two processes
running in parallel which
does not change $\beta$ ---
and understand its large-scale
asymptotic behavior. 
We do not focus on this question in the present work.

\subsection{Stationary dynamics on the TASEP measure}
\label{sub:intro_equil}

Fix $t\in \mathbb{R}_{>0}$.
The backwards Hammersley-type process 
slowed down by a factor of $t$ 
compensates the time change 
of the
forward TASEP 
evolution.
Running these two processes in parallel
thus amounts to 
a
continuous-time Markov process which 
\emph{preserves} the TASEP distribution~$\upmu_t$.

One can say that the TASEP distributions $\upmu_t$
are the ``blocking measures''
for the stationary dynamics
\cite{Liggett1985}
(see also
\cite{balazs2018product}).

The presence of the stationary dynamics
on $\upmu_t$ allows to obtain new 
properties of the TASEP measure. 
In particular, we
write down an exact
evolution equation 
for $\mathbb{E}\,G(N_t^0)$,
where 
$N_t^0$ is the number of 
particles to the right of zero at time $t$, and 
$G$ is an arbitrary function.
This equation contains
one more random quantity --- the 
number of holes immediately
to the left of zero.
See 
\Cref{cor:microscopic_equation}
for details.

Moreover, in \Cref{sec:equil_dyn}
we rederive the limit shape parabola for the TASEP 
by looking at the hydrodynamics
of the process preserving~$\upmu_t$.
Indeed, recall that the TASEP local equilibria --- the ergodic
translation invariant measures
on configurations on the full line $\mathbb{Z}$
which are also invariant
under the TASEP evolution --- 
are precisely the product Bernoulli measures
\cite{Liggett1985}. In the bulk of the BHP,
the difference between jump rates of consecutive particles 
is inessential. Thus, the product
Bernoulli measures also serve as local 
equilibria for the BHP.\footnote{In fact, they are the only 
(extreme)
local equilibria because the particle-hole 
involution turns the homogeneous BHP into the 
PushTASEP (=~long-range TASEP), and local equilibria for the 
latter are classified 
\cite{guiol1997resultat},
\cite{andjel2005long}.}
By looking at the local equilibria,
one can write down two hydrodynamic 
PDEs for the 
TASEP limit shape: first is the well-known
Burgers' equation, and the second is a PDE
coming from the BHP, which is specific to the 
step initial condition.
After simplifications, these PDEs
lead to the parabola \eqref{eq:TASEP_limit_shape}.

Beyond hydrodynamics, 
the asymptotic \emph{fluctuation}
behavior 
of the TASEP measures 
$\upmu_t$ as $t\to+\infty$
is understood very well by now, starting from
\cite{johansson2000shape}. 
It would be very interesting to extend these results
to the combination $\textnormal{TASEP}+t^{-1}\textnormal{BHP}$ which preserves
$\upmu_t$.

\subsection{Further extensions}
\label{sub:intro_extensions_questions}

The Markov maps 
on Schur processes
we introduce to prove 
our main result,
\Cref{thm:intro_action_on_TASEP},
offer a variety of other applications and open problems.
We discuss them in more detail \Cref{sec:extensions}.
Here let us briefly outline the main directions:
\begin{itemize}
	\item 
		The one-dimensional statement
		(mapping the TASEP distributions back in time)
		has an extension to two dimensions.
		Namely, there is a continuous-time Markov
		process on interlacing particle configurations
		(as in \Cref{fig:intro_interlacing})
		which maps back in time the 
		distributions of the 
		anisotropic KPZ growth process
		on interlacing arrays 
		studied in
		\cite{BorFerr2008DF}.
	\item 
		Instead of Schur processes, one can consider 
		interlacing configurations of finite depth.
		This includes
		probability
		distributions on boxed plane partitions
		with weight proportional to $q^{\mathrm{vol}}$
		(where $\mathrm{vol}$ is the volume under the boxed
		plane partition).
		In this setting our constructions produce Markov chains
		mapping the measure $q^{\mathrm{vol}}$ to 
		the measure
		$q^{-\mathrm{vol}}$, and vice versa.
		(A
		simulation is available online
		\cite{PetrovZhang2019simul}.)
		Applying this procedure twice 
		leads to a new sampling algorithm for the 
		measures $q^{\pm\mathrm{vol}}$.
	\item
		A certain bulk limit of our two-dimensional Markov
		maps essentially leads to the 
		growth processes preserving ergodic
		Gibbs measures on 
		two-dimensional interlacing configurations
		introduced and 
		studied in \cite{Toninelli2015-Gibbs}.
		Thus, one can view our Markov maps as 
		exact
		``pre-bulk'' 
		stationary dynamics on two-dimensional
		interlacing configurations.
	\item 
		\Cref{thm:intro_action_on_TASEP} 
		may be interpreted as the 
		statement that the family of measures $\{\upmu_t\}$
		is coherent with respect to a \emph{projective
		system} determined by the process $\{\mathbf{L}_\tau\}$.
		Projective systems \cite{BorodinOlsh2011Bouquet}
		generalize the notion of branching graphs, and the 
		latter play a fundamental role in 
		Asymptotic Representation Theory
		\cite{VK81AsymptoticTheory},
		\cite{borodin2016representations}.
		(Even further, the distributions of the anisotropic
		KPZ growth are also coherent, on a 
		projective system whose ``levels'' are spaces of
		two-dimensional interlacing configurations.)
		The framework of projective systems / branching graphs
		provides many natural questions 
		in this setting.
	\item 
		Structurally, our Markov maps
		are inspired
		by the 
		study of stochastic vertex 
		models and bijectivization of the Yang-Baxter equation
		\cite{BufetovPetrovYB2017}, 
		\cite{BufetovMucciconiPetrov2018}.
		Compared with the Schur case, the 
		full Yang-Baxter equation 
		for the quantum $\mathfrak{sl}_2$
		contains more parameters.
		In this setting, Schur polynomials
		should be replaced 
		by the spin Hall-Littlewood or spin 
		$q$-Whittaker symmetric functions 
		\cite{Borodin2014vertex},
		\cite{BorodinWheelerSpinq}.
		It is interesting to see how far
		\Cref{thm:intro_action_on_TASEP} can be generalized
		to other particle systems
		arising in this framework, 
		including ASEP, various stochastic
		six vertex models, and 
		random matrix models.
	\item There exists a backwards dynamics
		for the ASEP started from a
		family of shock measures
		\cite{belitsky2016self}.
		This ASEP backwards dynamics is obtained via a
		duality.
		While the shock measures
		are very different
		from the step initial configuration,
		it would be interesting to 
		find connections of 
		\Cref{thm:intro_action_on_TASEP}
		to Markov duality.
\end{itemize}

Concrete open questions 
along these directions 
are formulated and discussed
in 
\Cref{sec:extensions}.

\subsection*{Outline}

In \Cref{sec:ascending_Schur,sec:Schur_and_TASEP}
we recall the necessary facts about Schur processes,
TASEP, and their connection.
In \Cref{sec:Markov_maps} we introduce the L and R Markov
maps at the level of interlacing arrays. The action of 
each such map
swaps two neighboring spectral
parameters.
In \Cref{sec:action_on_q_Gibbs}
we combine the L Markov maps in such a way that their combination
$\mathbb{L}^{(q)}$ preserves the class of $q$-Gibbs measures
on interlacing arrays
(which includes the Schur
processes related to the $q$-dependent TASEP).
We compute the action of $\mathbb{L}^{(q)}$
on $q$-Gibbs measures and the corresponding Schur processes.
In \Cref{sec:limit_q_1} we take a limit $q\to1$, which
leads to our main result, \Cref{thm:intro_action_on_TASEP}.
In \Cref{sec:equil_dyn} 
we illustrate the relation between the TASEP and the backwards evolutions
at the hydrodynamic level by looking at the stationary 
dynamics on the TASEP distribution $\upmu_t$.
Finally, in \Cref{sec:extensions} we discuss 
possible extensions of our
constructions 
indicated in 
\Cref{sub:intro_extensions_questions} above,
and formulate a number of open questions.

\subsection*{Data Availability}

Data sharing not applicable to this article as no datasets were generated or analysed during the current study.

\subsection*{Acknowledgments}

We are grateful to 
Alexei Borodin, 
Evgeni Dimitrov,
Patrik Ferrari,
Vadim Gorin, 
Matthew Nicoletti,
Grigori Olshanski,
Dan Romik, 
Tomohiro Sasamoto, 
Mykhaylo Shkolnikov,
and 
Fabio Toninelli
for helpful remarks.
LP is grateful to the organizers of the workshop ``Asymptotic
Algebraic Combinatorics'' and the support of the Banff International
Research Station where a part of this work was done.
Both authors were partially supported by the National Science Foundation grant
DMS-1664617.


\section{Ascending Schur processes}
\label{sec:ascending_Schur}

This section is a brief review of ascending Schur processes introduced
in \cite{okounkov2003correlation} and their relation to TASEP.
More details may be found in, e.g., \cite{BorodinGorinSPB12}.

\subsection{Partitions} 
	A partition 
	$\lambda = (\lambda_1\ge \ldots\ge \lambda_\ell(\lambda)>0 )$,
	where $\lambda_i\in \mathbb{Z}$,
	is a weakly decreasing sequence of nonnegative integers. 
	We denote $|\lambda|:=\sum_{i=1}^{N}\lambda_i$.
	We call $\ell(\lambda)$ the length of a partition. 
	By convention we do not distinguish partitions
	if they differ by trailing zeroes.
	In this way $\ell(\lambda)$ always denotes the number
	of strictly positive parts in $\lambda$.
	Denote by $\mathbb{Y}$ the set of all partitions
	including the empty one $\varnothing$ (by convention,
	$\ell(\varnothing)=|\varnothing|=0$).
	

\subsection{Schur polynomials}

Fix $N\in \mathbb{Z}_{\ge1}$.
The Schur symmetric polynomials 
in $N$ variables are
indexed $\lambda\in \mathbb{Y}$
and are defined as
\begin{equation*}
	s_\lambda(x_1,\ldots,x_N ):=
	\frac{\det \bigl[x_i^{\lambda_j + N -j} \bigr]_{i,j=1}^N}{\prod_{1\le i<j\le N} (x_i -x_j)},\qquad 
	N\ge \ell(\lambda).
\end{equation*}
If $N<\ell(\lambda)$, we set $s_\lambda(x_1,\ldots,x_N )=0$,
by definition.

The Schur polynomials $s_\lambda$ indexed by all $\lambda\in \mathbb{Y}$
with $\ell(\lambda)\le N$
form a linear basis in
the space $\mathbb{C}[x_1,\ldots,x_N ]^{\mathfrak{S}_N}$
of symmetric polynomials in $N$ variables.
Each $s_\lambda$ is a homogeneous polynomial
of degree $|\lambda|$.

The Schur polynomials are stable in the following sense:
\begin{equation}
	\label{eq:schur_stability}
	s_{\lambda}
	(x_1,\ldots,x_N,0)=
	s_{\lambda}
	(x_1,\ldots,x_N).
\end{equation}
This stability allows to define Schur symmetric \emph{functions}
$s_\lambda$, $\lambda\in \mathbb{Y}$,
in infinitely many variables.
These objects form a linear basis of the algebra of
symmetric functions $\Lambda$. We refer to 
\cite[Ch. I.2]{Macdonald1995} for the precise definition and details 
on the algebra $\Lambda$. 

\subsection{Skew Schur polynomials}
The skew Schur polynomials
$s_{\lambda/\varkappa}$, $\lambda,\varkappa\in \mathbb{Y}$
are defined through the branching rule as follows:
\begin{equation}
	\label{eq:branching}
	s_\lambda(x_1,\ldots,x_N )
	=
	\sum_{\varkappa\in \mathbb{Y}}
	s_\varkappa(x_1,\ldots,x_K )
	s_{\lambda/\varkappa}(x_{K+1},\ldots,x_N ).
\end{equation}
Indeed, 
$s_\lambda(x_1,\ldots,x_N )$ is a symmetric polynomial in 
$x_1,\ldots,x_K$, and so 
the skew Schur polynomials in
\eqref{eq:branching} are the coefficients of the linear expansion.
These skew Schur polynomials are symmetric in $x_{K+1},\ldots,x_N$
and satisfy the stability property similar to \eqref{eq:schur_stability}.
We have $s_{\lambda/\varnothing}=s_\lambda$.

Let $\lambda,\varkappa\in\mathbb{Y}$.
Plugging in just one variable into $s_{\lambda/\varkappa}$
simplifies this symmetric function. Namely,
$s_{\lambda/\varkappa}(x)$ vanishes unless
$\varkappa$ and $\lambda$ \emph{interlace}
(notation $\varkappa\prec \lambda$; equivalently, $\lambda/\varkappa$ is a horizontal strip):
\begin{equation}
	\label{eq:interlacing}
		\lambda_1\ge \varkappa_1\ge \lambda_2\ge \varkappa_2
		\ge \ldots .
\end{equation}
Moreover, 
\begin{equation}
	\label{eq:skew_one_var}
	s_{\lambda/\varkappa}(x)=x^{|\lambda|-|\varkappa|}\mathds{1}_{\varkappa\prec\lambda}.
\end{equation}
For any $\lambda\in \mathbb{Y}$, the set $\left\{ \varkappa\colon \varkappa\prec \lambda \right\}$
is finite.

Iterating \eqref{eq:branching} and breaking down all skew
Schur polynomials into single-variable ones,
we see that each Schur polynomial has the following form:
\begin{equation}\label{eq:schur_array_expansion}
	s_\lambda(x_1,\ldots,x_N )=
	\sum_{\lambda^{(1)}\prec \ldots\prec \lambda^{(N)}=\lambda }
	x_1^{|\lambda^{(1)}|}
	x_2^{|\lambda^{(2)}|-|\lambda^{(1)}|}
	\ldots
	x_{N-1}^{|\lambda^{(N-1)}|-|\lambda^{(N-2)}|}
	x_{N}^{|\lambda^{(N)}|-|\lambda^{(N-1)}|},
\end{equation}
where the sum is taken over all interlacing arrays of partitions 
of depth $N$ in which the top row coincides with $\lambda$
(see \Cref{fig:array} for an illustration).
In combinatorial language, \eqref{eq:schur_array_expansion} is the 
representation of a Schur polynomial as a generating function
of semistandard Young tableaux, cf. \cite{fulton1997young}.

\begin{remark}\label{rmk:interlacing_number_of_rows}
	If $N<\ell(\lambda)$, then there are no interlacing arrays
	of depth $N$ whose top row is $\lambda$ because
	at each level one can add at most one nonzero component. 
	Thus, the right-hand side of \eqref{eq:schur_array_expansion}
	automatically vanishes if $N<\ell(\lambda)$.
	This agrees with the fact that
	$s_\lambda(x_1,\ldots,x_N )=0$ if $N<\ell(\lambda)$.
\end{remark}

The following two identities for skew 
Schur polynomials play a fundamental role in our work.
The first identity is a straightforward consequence of the symmetry 
of the Schur polynomials.

\begin{proposition}	
	\label{prop:Schur_symmetry}
	For any $\lambda,\mu\in \mathbb{Y}$ and variables $x,y$ we have
	\begin{equation*}
		\sum_{\varkappa\in \mathbb{Y}}
		s_{\lambda/\varkappa}(x)s_{\varkappa/\mu}(y)
		=
		\sum_{\hat \varkappa\in \mathbb{Y}}
		s_{\lambda/\hat\varkappa}(y)
		s_{\hat\varkappa/\mu}(x).
	\end{equation*}
	The sums in both sides are finite.
\end{proposition}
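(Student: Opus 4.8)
The plan is to recognize both sides as the two-variable skew Schur polynomial $s_{\lambda/\mu}(x,y)$ with its two arguments listed in the two possible orders, and then to invoke the symmetry of skew Schur polynomials in their variables. So the first step is to record the \emph{skew} branching rule: for any $0\le K\le N$,
\[
	s_{\lambda/\mu}(x_1,\dots,x_N)=\sum_{\nu\in\mathbb{Y}}s_{\nu/\mu}(x_1,\dots,x_K)\,s_{\lambda/\nu}(x_{K+1},\dots,x_N).
\]
This is not stated verbatim in the excerpt, so I would derive it from the non-skew branching rule \eqref{eq:branching}. Introduce an auxiliary batch of variables $\mathbf z$ and expand $s_\lambda(\mathbf z,x_1,\dots,x_N)$ in two ways: first, iterating \eqref{eq:branching} to peel off $\mathbf z$, one gets $\sum_{\mu}s_\mu(\mathbf z)\,s_{\lambda/\mu}(x_1,\dots,x_N)$ by the definition of skew Schur functions; second, peeling off the block $(\mathbf z,x_1,\dots,x_K)$ and then re-expanding $s_\bullet(\mathbf z,x_1,\dots,x_K)$ via \eqref{eq:branching} gives $\sum_\nu\sum_\mu s_\mu(\mathbf z)\,s_{\nu/\mu}(x_1,\dots,x_K)\,s_{\lambda/\nu}(x_{K+1},\dots,x_N)$. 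Since the $s_\mu(\mathbf z)$ form a linear basis of symmetric functions in $\mathbf z$, comparing coefficients yields the displayed identity.

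Now specialize to $N=2$, $K=1$. Taking $(x_1,x_2)=(y,x)$ gives
\[
	s_{\lambda/\mu}(y,x)=\sum_{\varkappa\in\mathbb{Y}}s_{\varkappa/\mu}(y)\,s_{\lambda/\varkappa}(x),
\]
which is the left-hand side of the Proposition; taking instead $(x_1,x_2)=(x,y)$ gives
\[
	s_{\lambda/\mu}(x,y)=\sum_{\hat\varkappa\in\mathbb{Y}}s_{\hat\varkappa/\mu}(x)\,s_{\lambda/\hat\varkappa}(y),
\]
which is the right-hand side. Because the skew Schur polynomials are symmetric in their arguments (noted in the excerpt after \eqref{eq:branching}), we have $s_{\lambda/\mu}(x,y)=s_{\lambda/\mu}(y,x)$, and the two sums coincide. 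For the finiteness claim: by \eqref{eq:skew_one_var} the summand $s_{\lambda/\varkappa}(x)\,s_{\varkappa/\mu}(y)$ vanishes unless $\mu\prec\varkappa\prec\lambda$, and the set of such $\varkappa$ is contained in the finite set $\{\varkappa:\varkappa\prec\lambda\}$; the same applies to the right-hand sum.

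I do not expect a genuine obstacle here — the entire content is the symmetry of Schur polynomials under permutation of variables. The only point requiring care is the step justifying the skew branching rule (equivalently, the two-row specialization of the combinatorial expansion \eqref{eq:schur_array_expansion} in skew form), since the excerpt states only the non-skew branching rule \eqref{eq:branching}; the auxiliary-variable comparison above supplies what is missing in one line. Alternatively, one could bypass this and argue directly via \eqref{eq:skew_one_var}, noting that both sides equal $\sum_{\mu\prec\varkappa\prec\lambda}x^{|\lambda|-|\varkappa|}y^{|\varkappa|-|\mu|}$ read against the two variables $x,y$, i.e.\ both equal the monomial expansion of the two-variable skew Schur polynomial, which is manifestly symmetric; but the branching-rule route is cleaner and reusable later.
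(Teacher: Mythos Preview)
Your proposal is correct and matches the paper's approach: the paper does not give a detailed proof, stating only that the identity ``is a straightforward consequence of the symmetry of the Schur polynomials.'' Your argument fleshes out exactly this, identifying both sides with $s_{\lambda/\mu}(x,y)=s_{\lambda/\mu}(y,x)$ via the skew branching rule, and your derivation of the latter from \eqref{eq:branching} is a reasonable justification since the paper does not state the skew version explicitly.
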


The second is the 
skew Cauchy identity, see \cite[Ch. I.5]{Macdonald1995}.

\begin{proposition}\label{prop:skew_Cauchy}
	For any $\lambda, \mu \in \mathbb{Y}$
	and variables $x_1,\ldots,x_N,y_1,\ldots,y_M $ we have
	\begin{equation}\label{eq:cauchy}
		\begin{split}
			&
			\sum_{\nu\in \mathbb{Y}} 
			s_{\nu/\mu}(x_1,\ldots,x_N )
			s_{\nu/\lambda}(y_1,\ldots,y_M )
			\\&\hspace{70pt}=
			\prod_{i=1}^{N}\prod_{j=1}^{M}
			\frac{1}{1 - x_i y_j} 
			\sum_{\varkappa \in \mathbb{Y}} 
			s_{\lambda / \varkappa }(x_1,\ldots,x_N)
			s_{\mu/\varkappa}(y_1,\ldots,y_M ).
		\end{split}
	\end{equation}
	This is an identity of generating series in $x_i,y_j$
	under the standard geometric series expansion
	$\frac{1}{1-x_iy_j}=1+x_iy_j+(x_iy_j)^2+\ldots $.
	Moreover, \eqref{eq:cauchy} holds as a numerical identity if 
	$x_i,y_j\in \mathbb{C}$ are such that $|x_iy_j|<1$ for all $i,j$.
\end{proposition}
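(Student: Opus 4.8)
Since the skew Cauchy identity \eqref{eq:cauchy} is classical, there are several standard routes to it: via the Hall inner product on the algebra of symmetric functions, where skewing by $\mu$ is adjoint to multiplication by $s_\mu$, the coproduct is $\Delta s_\mu=\sum_\nu s_{\mu/\nu}\otimes s_\nu$, and the Cauchy kernel $\prod_{i,j}(1-x_iy_j)^{-1}=\sum_\kappa s_\kappa(x)s_\kappa(y)$ acts as a reproducing kernel; via non-intersecting lattice paths and the Lindström--Gessel--Viennot lemma, where \eqref{eq:cauchy} becomes a path-uncrossing bijection; or via a shape-preserving variant of the RSK correspondence. The plan here is instead to give a self-contained argument using only the branching rule \eqref{eq:branching} and the one-variable formula \eqref{eq:skew_one_var}, by induction on the total number $N+M$ of variables. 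Throughout, \eqref{eq:cauchy} is established as an identity of formal power series in the $x_i,y_j$ (each coefficient a finite sum), from which the numerical statement for $|x_iy_j|<1$ follows in the usual way.

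\emph{Base cases.} If $N=0$ or $M=0$, then, since the empty-variable specialization of $s_{\nu/\mu}$ equals $\mathds{1}_{\nu=\mu}$, both sides of \eqref{eq:cauchy} reduce to $s_{\mu/\lambda}$ (respectively $s_{\lambda/\mu}$) in the remaining variables, so these cases are immediate. The only irreducible base case is $N=M=1$, and this is the one delicate point of the whole argument. By \eqref{eq:skew_one_var} the two sides of \eqref{eq:cauchy} become
\[
\sum_{\nu:\ \mu\prec\nu,\ \lambda\prec\nu} x^{|\nu|-|\mu|}\,y^{|\nu|-|\lambda|}
\qquad\text{and}\qquad
\frac{1}{1-xy}\sum_{\varkappa:\ \varkappa\prec\lambda,\ \varkappa\prec\mu} x^{|\lambda|-|\varkappa|}\,y^{|\mu|-|\varkappa|}.
\]
Set $a_i:=\max(\lambda_i,\mu_i)$ and $b_i:=\min(\lambda_i,\mu_i)$. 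The interlacing constraints decouple row by row: $\mu\prec\nu$ and $\lambda\prec\nu$ hold if and only if $\nu_1\ge a_1$ and $a_i\le\nu_i\le b_{i-1}$ for all $i\ge2$ (the partition inequalities $\nu_i\ge\nu_{i+1}$ being automatic, since $\nu_{i+1}\le b_i\le a_i\le\nu_i$), while $\varkappa\prec\lambda$ and $\varkappa\prec\mu$ hold if and only if $a_{i+1}\le\varkappa_i\le b_i$ for all $i\ge1$. As $x^{|\nu|-|\mu|}y^{|\nu|-|\lambda|}=\prod_i x^{\nu_i-\mu_i}y^{\nu_i-\lambda_i}$ factors over rows, each side becomes a product of one infinite and finitely many finite geometric series; summing them and using $\lambda_i-b_i=a_i-\mu_i$ and $\mu_i-b_i=a_i-\lambda_i$ (valid because $\{\lambda_i,\mu_i\}=\{a_i,b_i\}$), one checks that both sides equal $x^{|a|-|\mu|}\,y^{|a|-|\lambda|}\,\tfrac{1}{1-xy}\prod_{i\ge2}\tfrac{1-(xy)^{b_{i-1}-a_i+1}}{1-xy}$ after reindexing $i\mapsto i-1$ in the product coming from the $\varkappa$-side (and both sides vanish when some interval $[a_i,b_{i-1}]$ is empty). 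This step is where genuine care is needed: treating the unique infinite geometric series correctly, verifying the row-by-row decoupling including the automatic partition monotonicity of $\nu$ and $\varkappa$, and matching the telescoping finite factors after the index shift.

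\emph{Inductive step.} Let $N+M\ge2$ and assume \eqref{eq:cauchy} for all smaller values of $N+M$. Since \eqref{eq:cauchy} is symmetric under $(x_1,\dots,x_N;\lambda)\leftrightarrow(y_1,\dots,y_M;\mu)$, we may assume $N\ge2$. Peel off $x_N$ by the branching rule \eqref{eq:branching}: the left side of \eqref{eq:cauchy} equals $\sum_\rho s_{\rho/\mu}(x_1,\dots,x_{N-1})\sum_\nu s_{\nu/\rho}(x_N)\,s_{\nu/\lambda}(y_1,\dots,y_M)$. Apply the inductive hypothesis to the inner sum (which has $1+M<N+M$ variables), rewriting it as $\prod_{j=1}^M(1-x_Ny_j)^{-1}\sum_\sigma s_{\lambda/\sigma}(x_N)\,s_{\rho/\sigma}(y_1,\dots,y_M)$; swap the $\rho$- and $\sigma$-sums, and apply the inductive hypothesis once more to $\sum_\rho s_{\rho/\mu}(x_1,\dots,x_{N-1})\,s_{\rho/\sigma}(y_1,\dots,y_M)$ (now with $(N-1)+M<N+M$ variables) to get $\prod_{i=1}^{N-1}\prod_{j=1}^M(1-x_iy_j)^{-1}\sum_\varkappa s_{\sigma/\varkappa}(x_1,\dots,x_{N-1})\,s_{\mu/\varkappa}(y_1,\dots,y_M)$. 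Finally recombine $\sum_\sigma s_{\sigma/\varkappa}(x_1,\dots,x_{N-1})\,s_{\lambda/\sigma}(x_N)=s_{\lambda/\varkappa}(x_1,\dots,x_N)$ via \eqref{eq:branching}. The prefactors collect into $\prod_{i=1}^N\prod_{j=1}^M(1-x_iy_j)^{-1}$, and what remains is exactly the right side of \eqref{eq:cauchy}. Everything after the base case is thus formal bookkeeping with the branching rule, and the passage from the formal identity to the numerical one for $|x_iy_j|<1$ is routine.
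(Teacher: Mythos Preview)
The paper does not actually prove this proposition; it simply cites \cite[Ch.~I.5]{Macdonald1995}. Your proposal therefore supplies what the paper omits, and the argument is correct: the $N=M=1$ base case reduces cleanly to matching products of geometric series after the row-by-row decoupling you describe (the check that the partition monotonicity of $\nu$ and $\varkappa$ is automatic via $b_i\le a_i$ is the key observation), and the inductive peeling-off of one variable via branching is straightforward bookkeeping. Compared with the standard textbook routes you list (Hall inner product, RSK, lattice paths), your approach is more elementary in that it uses only \eqref{eq:skew_one_var} and branching, at the cost of the explicit base-case computation.

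One small point worth making explicit: the branching rule you invoke in the inductive step is the \emph{skew} version
\[
s_{\nu/\mu}(x_1,\ldots,x_N)=\sum_{\rho} s_{\rho/\mu}(x_1,\ldots,x_{N-1})\,s_{\nu/\rho}(x_N),
\]
whereas \eqref{eq:branching} as stated in the paper is for the non-skew $s_\lambda$. The skew version follows immediately by iterating \eqref{eq:branching} and using the linear independence of the $s_\mu(x_1,\ldots,x_K)$, but since you cite \eqref{eq:branching} directly, a one-line remark to this effect would close the gap.
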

\begin{remark}
	If we set $\lambda=\mu=\varnothing$ in \eqref{eq:cauchy}, the 
	sum in the right-hand side disappears 
	(because $s_{\varnothing/\varkappa}=\mathds{1}_{\varkappa=\varnothing}$),
	and we obtain 
	\begin{equation}
		\label{eq:usual_cauchy}
		\sum_{\nu\in \mathbb{Y}} 
			s_{\nu}(x_1,\ldots,x_N )
			s_{\nu}(y_1,\ldots,y_M )
			=
			\prod_{i=1}^{N}\prod_{j=1}^{M}
			\frac{1}{1 - x_i y_j}.
	\end{equation}
	Again, this is a numerical identity provided that 
	$|x_iy_j|<1$ for all $i,j$.
\end{remark}

\subsection{Specializations}
\label{sub:specializations}

When $x\ge0$, 
we have $s_{\lambda/\varkappa}(x)\ge0$
from \eqref{eq:skew_one_var}.
More generally,
the Schur polynomials
$s_\lambda(x_1,\ldots,x_N )$
are nonnegative 
for real nonnegative $x_1,\ldots,x_N$.

We will also need the
\emph{Plancherel} specializations of Schur functions $s_\lambda$.
These specializations,
indexed by $t\ge0$,
may be defined through the limit
\begin{equation}
	\label{eq:Plancherel_limit}
	s_{\lambda}(\rho_t)
	:=
	\lim_{K\to+\infty}
	s_\lambda\left( \frac{t}{K},\ldots,\frac{t}{K} \right),
	\qquad 
	\lambda\in \mathbb{Y},
\end{equation}
where $\frac{t}{K}$ is repeated $K$ times.

\begin{remark}
	We also have 
	$s_\lambda(\rho_t)=t^{|\lambda|}\,\dfrac{\dim \lambda}{|\lambda|!}$,
	where $\dim \lambda$ is the dimension of the irreducible
	representation of the symmetric
	group $\mathfrak{S}_{|\lambda|}$,
	or, equivalently, the number of standard Young tableaux of 
	shape $\lambda$.
\end{remark}

Generic nonnegative specializations will be denoted
as $\rho\colon \Lambda\to \mathbb{R}$, and we will also use the notation $s_\lambda(\rho)$
for $\rho(s_\lambda)$. For the purposes of the present paper, 
$\rho$ would be either a Plancherel specialization, or 
a substitution of a finitely many nonnegative variables
into the symmetric function.

\begin{remark}
	A classification of 
	Schur-positive
	specializations
	(that is, algebra homomorphisms $\Lambda\to\mathbb{R}$ which are 
	nonnegative on Schur functions)
	is known and is
	equivalent to the celebrated Edrei--Thoma theorem.
	See, for example, \cite{borodin2016representations} 
	for a modern account 
	discussing various equivalent formulations.
\end{remark}

\subsection{Schur processes}

Schur measures and processes 
are probability distributions on 
partitions 
or sequences of partitions
whose probability weights are expressed through Schur polynomials
in a certain way. They
were introduced in
\cite{okounkov2001infinite}, 
\cite{okounkov2003correlation}. 

A \emph{Schur measure} is a probability
measure on $\mathbb{Y}$
with probability weights depending on two nonnegative specializations
$\rho_1,\rho_2$:
\begin{equation}
	\label{eq:Schur_measure}
	\mathbb{P}[\rho_1\mid  \rho_2](\lambda) =
	\frac{1}{Z}\,
	s_{\lambda}(\rho_1) s_{\lambda}(\rho_2),
	\qquad 
	Z=\sum_{\lambda\in\mathbb{Y}}s_\lambda(\rho_1)s_\lambda(\rho_2).
\end{equation}
The normalizing constant $Z$ can be computed 
using the Cauchy identity \eqref{eq:usual_cauchy}
(provided that the infinite sum converges).

Schur processes are probability measures on 
sequences of partitions generalizing the 
Schur measures. 
We will only need the particular case of \emph{ascending Schur processes}. 
These are probability measures on interlacing arrays
\begin{equation*}
	\lambda^{(1)}\prec \lambda^{(2)}\prec \ldots\prec \lambda^{(N)}, 
	\qquad \lambda^{(j)}\in\mathbb{Y}
\end{equation*}
(for some fixed $N$) depending on a nonnegative specialization $\rho$
and $c_1,\ldots,c_N\ge0$:
\begin{equation}
	\label{eq:Schur_process}
	\mathbb{P}[\vec c\mid \rho](\lambda^{(1)},\ldots,\lambda^{(N)} ):=
	\frac{1}{Z}\,
	s_{\lambda^{(1)}}(c_1)s_{\lambda^{(2)}/\lambda^{(1)}}(c_2)\ldots
	s_{\lambda^{(N)}/\lambda^{(N-1)}}(c_N)s_{\lambda^{(N)}}(\rho).
\end{equation}
The normalizing constant has the form
(this follows from \eqref{eq:branching} and \eqref{eq:usual_cauchy}):
\begin{equation}\label{eq:Schur_process_normalization}
	Z=\sum_{\lambda\in \mathbb{Y}}
	s_\lambda(c_1,\ldots,c_N )s_\lambda(\rho)
\end{equation}
(provided that the series converges).
We call $N$ the depth of a Schur process.
We will sometimes call the $c_i$'s the \emph{spectral parameters} 
of Schur process $\mathbb{P}[\vec c\mid \rho]$.

The next statement immediately follows from \eqref{eq:branching}
and the skew Cauchy identity:
\begin{proposition}
	\label{prop:marginal_Schur}
	Under the Schur process \eqref{eq:Schur_process},
	the marginal distribution of each 
	$\lambda^{(K)}$, $1\le K\le N$,
	is given by the Schur measure $\mathbb{P}[(c_1,\ldots,c_K )\mid\rho]$.
\end{proposition}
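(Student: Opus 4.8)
The plan is to obtain the marginal of $\lambda^{(K)}$ by summing the Schur process weight \eqref{eq:Schur_process} over all partitions $\lambda^{(j)}$ with $j\ne K$. In \eqref{eq:Schur_process} the ``lower'' partitions $\lambda^{(1)}\prec\cdots\prec\lambda^{(K-1)}$ and the ``upper'' partitions $\lambda^{(K+1)}\prec\cdots\prec\lambda^{(N)}$ interact only through $\lambda^{(K)}$, so the two blocks can be summed independently.

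First I would collapse the lower block. Iterating the branching rule \eqref{eq:branching} (equivalently, reading \eqref{eq:schur_array_expansion} from the bottom row upward) gives the finite sum
\[
  \sum_{\lambda^{(1)}\prec\cdots\prec\lambda^{(K-1)}\prec\lambda^{(K)}}
  s_{\lambda^{(1)}}(c_1)\,s_{\lambda^{(2)}/\lambda^{(1)}}(c_2)\cdots
  s_{\lambda^{(K)}/\lambda^{(K-1)}}(c_K)
  = s_{\lambda^{(K)}}(c_1,\ldots,c_K).
\]
For the upper block I would first use the branching rule again to collapse $\lambda^{(K+1)},\ldots,\lambda^{(N-1)}$, reducing the remaining sum to $\sum_{\lambda^{(N)}\in\mathbb{Y}} s_{\lambda^{(N)}/\lambda^{(K)}}(c_{K+1},\ldots,c_N)\,s_{\lambda^{(N)}}(\rho)$, and then invoke the skew Cauchy identity (\Cref{prop:skew_Cauchy}) with $\mu=\lambda^{(K)}$, $\lambda=\varnothing$, the $x$-variables specialized to $c_{K+1},\ldots,c_N$ and the $y$-variables to $\rho$. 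On the right-hand side of \eqref{eq:cauchy} the sum over $\varkappa$ collapses because $s_{\varnothing/\varkappa}=\mathds{1}_{\varkappa=\varnothing}$, so
\[
  \sum_{\lambda^{(N)}\in\mathbb{Y}} s_{\lambda^{(N)}/\lambda^{(K)}}(c_{K+1},\ldots,c_N)\,s_{\lambda^{(N)}}(\rho)
  = A\cdot s_{\lambda^{(K)}}(\rho),
\]
where the Cauchy factor $A=A(c_{K+1},\ldots,c_N;\rho)$ does not depend on $\lambda^{(K)}$.

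Combining the two computations, the $\lambda^{(K)}$-marginal of \eqref{eq:Schur_process} equals $\tfrac{A}{Z}\,s_{\lambda^{(K)}}(c_1,\ldots,c_K)\,s_{\lambda^{(K)}}(\rho)$, which is proportional to the Schur measure weight in \eqref{eq:Schur_measure}. Since both are probability measures on $\mathbb{Y}$, the constants must agree, so $\tfrac{A}{Z}$ is precisely the reciprocal of $\sum_{\lambda}s_\lambda(c_1,\ldots,c_K)s_\lambda(\rho)$; this can also be checked directly from \eqref{eq:Schur_process_normalization} and the Cauchy identity \eqref{eq:usual_cauchy}. The argument is essentially bookkeeping and I do not anticipate a real obstacle; the only points needing care are the convergence of the infinite sum over $\lambda^{(N)}$, which is covered by the standing assumption that $Z$ in \eqref{eq:Schur_process_normalization} is finite, and the fact that \Cref{prop:skew_Cauchy} must be applied in the form valid for an arbitrary nonnegative specialization $\rho$ (a Plancherel specialization or a substitution of finitely many nonnegative variables), obtained by applying $\rho$ to the $y$-variables and, in the Plancherel case, passing to the limit \eqref{eq:Plancherel_limit}.
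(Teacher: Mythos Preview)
Your argument is correct and is exactly what the paper has in mind: the text states only that the result ``immediately follows from \eqref{eq:branching} and the skew Cauchy identity,'' and your proposal simply unpacks those two ingredients. The only extra care you take---noting convergence via \eqref{eq:Schur_process_normalization} and extending \Cref{prop:skew_Cauchy} to a general nonnegative specialization $\rho$---is appropriate bookkeeping.
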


\subsection{Schur processes of infinite depth}
\label{sub:infinite_depth}

Let us denote by $\mathcal{S}$ the set of interlacing arrays of infinite depth
$\{\lambda^{(j)}\}_{j\in \mathbb{Z}_{\ge0}}$, where
$\lambda^{(j)}\in \mathbb{Y}$ and
$\lambda^{(j-1)}\prec \lambda^{(j)}$ (cf. \Cref{fig:array} for an illustration).

\begin{remark}
	The interlacing array 
	in \Cref{fig:array}
	and the one in \Cref{fig:intro_interlacing}
	in the Introduction are related by 
	$x^N_k=\lambda^{(N)}_k-N+k$.
	We work with the $\{\lambda^{(N)}_k\}$
	notation throughout the paper.
\end{remark}

By the Kolmogorov extension theorem, a measure on $\mathcal{S}$
is uniquely determined by a collection of compatible joint distributions
of $\{\lambda^{(1)}\prec\ldots\prec\lambda^{(N)}\}_{N\ge1}$.
If these joint distributions satisfy the 
$\vec c$-Gibbs property,
then the resulting measure on $\mathcal{S}$ is $\vec c$-Gibbs.

Thus, \Cref{prop:marginal_Schur} implies the following extension of the definition of a 
Schur process.
Given an infinite sequence $c_1,c_2,\ldots, $ of nonnegative reals
such that the sums like \eqref{eq:Schur_process_normalization} converge for all 
$N$, one can define the Schur process $\mathbb{P}[\vec c\mid \rho]$ of infinite depth, i.e.,
a probability measure on $\mathcal{S}$.
Indeed, this is because the distributions \eqref{eq:Schur_process}
for different $N$ are compatible with each other
by \Cref{prop:marginal_Schur}, so the measure on 
$\mathcal{S}$ with the desired finite-dimensional distributions
exists.

\begin{figure}[htpb]
	\centering
	\begin{tikzpicture}[scale=1.2]
		\def\h{0.8}
		\def\x{2}
		\node at (-5,5) {$\lambda_N^{(N)}$};
		\node at (-3,5) {$\lambda_{N-1}^{(N)}$};
		\node at (0-\x/2,5) {$\ldots\ldots\ldots\ldots$};
		\node at (0-\x/2,5.7) {$\ldots\ldots\ldots\ldots\ldots\ldots\ldots\ldots\ldots\ldots\ldots\ldots\ldots\ldots$};
		\node at (3-\x,5) {$\lambda_{2}^{(N)}$};
		\node at (5-\x,5) {$\lambda_{1}^{(N)}$};
		\node at (-4,5-\h) {$\lambda^{(N-1)}_{N-1}$};
		\node at (-2,5-\h) {$\lambda^{(N-1)}_{N-2}$};
		\node at (0-\x/2+.1,5-\h) {$\ldots$};
		\node at (2-\x,5-\h) {$\lambda^{(N-1)}_{2}$};
		\node at (4-\x,5-\h) {$\lambda^{(N-1)}_{1}$};
		\node at (-3,5-2*\h) {$\lambda^{(N-2)}_{N-2}$};
		\node at (3-\x,5-2*\h) {$\lambda^{(N-2)}_{1}$};
		\foreach \LePoint in {(4.5-\x,5-\h/2),(2.5-\x,5-\h/2),(-3.5,5-\h/2),
		(-2.5,5-3*\h/2),(3.5-\x,5-3*\h/2),(2.5-\x,5-5*\h/2),(1.7-\x,5-7*\h/2)} {
    		\node [rotate=45] at \LePoint {$\le$};
    	};
    	\foreach \GePoint in {(3.5-\x,5-\h/2),(-4.5,5-\h/2),(-2.5,5-\h/2),
    	(-3.5,5-3*\h/2),(-2.5,5-5*\h/2),(.5,5-3*\h/2),(-1.7,5-7*\h/2)} {
    		\node [rotate=135] at \GePoint {$\ge$};
    	};
			\node at (0-\x/2,5-3*\h+.1) {$\ldots\ldots\ldots\ldots$};
    	\node at (0-\x/2+.08,5-4*\h) {$\lambda^{(1)}_1$};
	\end{tikzpicture}
	\caption{An interlacing array.}
	\label{fig:array}
\end{figure}
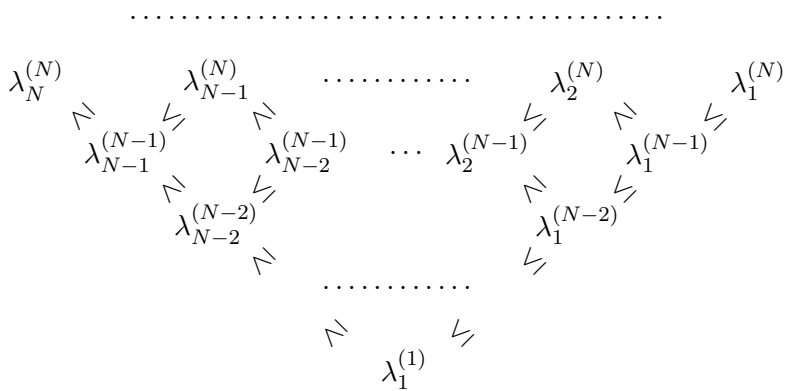

\subsection{\texorpdfstring{$\vec c$}{c}-Gibbs measures}
\label{sub:c_Gibbs_def}

Fix nonnegative reals $c_1,c_2,\ldots $.
A probability distribution on 
$\mathcal{S}$ is called \emph{$\vec c$-Gibbs} if
for any $N$, given $\lambda^{(N)}=\lambda$, the conditional distribution of the 
bottom part $\lambda^{(1)}\prec \ldots\prec\lambda^{(N-1)}\prec\lambda $
of the interlacing array has the form
\begin{equation}
	\label{eq:c_Gibbs}
	\mathrm{Prob}(\lambda^{(1)},\ldots,\lambda^{(N-1)}\mid \lambda^{(N)}=\lambda )
	=
	\frac{s_{\lambda^{(1)}}(c_1)s_{\lambda^{(2)}/\lambda^{(1)}}(c_2)\ldots
	s_{\lambda^{(N-1)}/\lambda^{(N-2)}}(c_{N-1})s_{\lambda/\lambda^{(N-1)}}(c_N)}
	{s_{\lambda}(c_1,\ldots,c_N )}.
\end{equation}
The expression in the denominator is simply the normalizing constant.
One can say that each interlacing array in \eqref{eq:c_Gibbs} is weighted 
proportional to the corresponding term in the expansion \eqref{eq:schur_array_expansion}.
Note that the $\vec c$-Gibbs property depends on the order of the $c_i$'s,
but the normalizing constant in \eqref{eq:c_Gibbs} does not.

The next lemma is straightforward consequence of \eqref{eq:c_Gibbs}.
\begin{lemma}
	\label{lemma:c_Gibbs_one_level}
	Fix any $j\ge2$.
	Under a $\vec c$-Gibbs measure, the conditional probability 
	of $\lambda^{(j)}$ given all $\lambda^{(i)}$, with $i\ne j$, 
	is proportional to 
	$s_{\lambda^{(j+1)}/\lambda^{(j)}}(c_{j+1})\,
	s_{\lambda^{(j)}/\lambda^{(j-1)}}(c_j)$.
\end{lemma}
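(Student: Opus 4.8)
The statement to prove is Lemma~\ref{lemma:c_Gibbs_one_level}: under a $\vec c$-Gibbs measure, the conditional weight of $\lambda^{(j)}$ given all $\lambda^{(i)}$, $i\ne j$, is proportional to $s_{\lambda^{(j+1)}/\lambda^{(j)}}(c_{j+1})\,s_{\lambda^{(j)}/\lambda^{(j-1)}}(c_j)$.

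The plan is to reduce the conditional probability to the finite-depth expression \eqref{eq:c_Gibbs} and then cancel everything that does not involve $\lambda^{(j)}$. First I would fix $N>j+1$ large enough that $\lambda^{(j)},\lambda^{(j-1)},\lambda^{(j+1)}$ all appear strictly below level $N$, and condition on $\lambda^{(N)}=\lambda$. By the defining property of a $\vec c$-Gibbs measure, the joint law of $\lambda^{(1)}\prec\cdots\prec\lambda^{(N-1)}$ given $\lambda^{(N)}=\lambda$ is the explicit product \eqref{eq:c_Gibbs}. To obtain the conditional law of $\lambda^{(j)}$ given the other $\lambda^{(i)}$ with $i\le N$, $i\ne j$, I divide the numerator of \eqref{eq:c_Gibbs} by the same expression summed over $\lambda^{(j)}$; the normalizing denominator $s_\lambda(c_1,\dots,c_N)$ cancels immediately since it does not depend on $\lambda^{(j)}$.

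Next, in the numerator $s_{\lambda^{(1)}}(c_1)\cdots s_{\lambda/\lambda^{(N-1)}}(c_N)$, every factor except $s_{\lambda^{(j)}/\lambda^{(j-1)}}(c_j)$ and $s_{\lambda^{(j+1)}/\lambda^{(j)}}(c_{j+1})$ is independent of $\lambda^{(j)}$ and so cancels between numerator and the $\lambda^{(j)}$-sum in the denominator. (Here I use that $j\ge2$ so that the factor below level $j$ is genuinely $s_{\lambda^{(j)}/\lambda^{(j-1)}}(c_j)$ rather than a boundary term; if $j=1$ the lower factor would be $s_{\lambda^{(1)}}(c_1)$, which is the degenerate case $\lambda^{(0)}=\varnothing$ and is consistent with the stated formula under that convention.) What remains is exactly the asserted proportionality. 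Finally, I would note that this does not depend on the choice of $N$: enlarging $N$ only multiplies both sides of the conditional by factors independent of $\lambda^{(j)}$, and by the Kolmogorov-consistency of the $\vec c$-Gibbs measures on $\mathcal S$ (Section~\ref{sub:infinite_depth}) one may pass to the full conditioning on all $\lambda^{(i)}$, $i\ne j$, including levels above $N$, since those contribute nothing new.

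There is essentially no obstacle here — the lemma is a bookkeeping consequence of the product structure of \eqref{eq:c_Gibbs}; the only point requiring a line of care is the passage from conditioning on finitely many levels to conditioning on the whole array, which is handled by the consistency already established, and the mild boundary bookkeeping at small $j$.
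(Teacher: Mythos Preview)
Your argument is correct; the paper does not give a proof at all, simply declaring the lemma ``straightforward,'' and your cancellation of the $\lambda^{(j)}$-independent factors in the product \eqref{eq:c_Gibbs} is exactly the intended one-line verification. The remarks about boundary cases and passing to infinite $N$ are more care than the paper itself expends, but they are accurate and do no harm.
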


Denote the space of all $\vec c$-Gibbs measures on $\mathcal{S}$ by $\mathfrak{G}_{\vec c}$.
Note that this space does not change
if we multiply all the parameters 
by the same positive number:
$\mathfrak{G}_{\vec c}=\mathfrak{G}_{a\cdot\vec c}$, $a>0$.
Indeed, this follows from \eqref{eq:c_Gibbs} and 
the homogeneity of the 
Schur polynomials. 

\begin{remark}
	\label{rmk:uniform_Gibbs}
	When all $c_i\equiv 1$, the conditional distribution \eqref{eq:c_Gibbs}
	becomes uniform (on the set of all interlacing arrays of depth $N$ 
	with top row $\lambda$).
	This uniform Gibbs case 
	justifies the name $\vec c$-Gibbs in the general situation.
\end{remark}

The Schur process $\mathbb{P}[\vec c\mid \rho]$ is a particular example of a
$\vec c$-Gibbs measure. The full classification of 
$\vec c$-Gibbs measures is known only in several particular cases.
In the uniform case $c_i\equiv 1$ this is the celebrated Edrei--Voiculescu theorem
(see \Cref{sub:general_schur} below and also, e.g., \cite{BorodinOlsh2011GT} for a modern account discussing various equivalent formulations).
When the $c_i$'s form a geometric sequence, the classification was obtained
much more recently in \cite{Gorin2010q}
(see also \cite{gorin2016quantization} for a generalization).

\section{Schur processes and TASEP}
\label{sec:Schur_and_TASEP}

In this section we recall a coupling between TASEP (with step initial configuration and 
particle-dependent speeds)
and a marginal of an ascending Schur process.
This mapping can be seen as a consequence of the column Robinson-Schensted-Knuth
insertion \cite{Vershik1986}, \cite{OConnell2003Trans}, \cite{OConnell2003}.
One can also define a continuous-time Markov dynamics
on interlacing arrays whose marginal is TASEP
\cite{BorFerr2008DF} (see also \cite{BorodinPetrov2013Lect}).

\subsection{TASEP}
\label{sub:TASEP_def}

Let $c_1,\ldots,c_N,\ldots$ be positive reals. 
The continuous-time TASEP (Totally Asymmetric Simple
Exclusion Process) with step initial condition and speeds $\vec c$ is defined as follows.
It is a Markov process on particle configurations
$\mathbf{x}(t)=(x_1(t)>x_2(t)>\ldots )$ on the integer lattice,
such that 
\begin{itemize}
	\item The initial particles' locations are $x_i(0)=-i$, $i=1,2,\ldots $
		(this is the step initial configuration);
	\item The configuration has the rightmost particle $x_1$;
	\item The configuration is densely packed far to the left,
		that is, for all large enough $M$ (where the bound on $M$ depends on $t$)
		we have
		$x_{M}(t)=-M$;
	\item There is at most one particle per site.
\end{itemize}
Denote the space of such left-packed and right-finite 
particle configurations on $\mathbb{Z}$
by $\mathcal{C}$.

The continuous-time Markov evolution of TASEP proceeds as follows.
Each particle $x_i$ has an independent exponential
clock with rate $c_i$. 
That is, the time before $x_i$ attempts to jump is an exponential random
variable: $\mathrm{Prob}(\textnormal{time}>t)=e^{-c_i t}$, $t\ge0$.
(We will refer to $c_i$'s as to the particle speeds.)
When the clock of $x_i$ rings, the particle jumps to the right by one
if the destination is not occupied. If the destination of the jumping particle
is occupied, the jump is forbidden and the particle configuration does not 
change. 
Because the process starts from the step initial configuration,
only finitely many particles are free to jump at any particular time. 
Therefore
at any time almost surely at most one jump happens.
See \Cref{fig:TASEP} for an illustration.

\begin{figure}[htpb]
	\centering
	\includegraphics{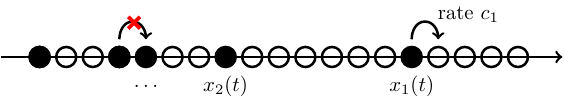}
	\caption{An example of a jump and a forbidden jump in TASEP.}
	\label{fig:TASEP}
\end{figure}


\subsection{Coupling to a Schur process}

Fix $N\in \mathbb{Z}_{\ge1}$, positive reals $c_1,\ldots,c_N $, and $t\ge0$.
Consider the Schur process
$\mathbb{P}[\vec c\mid\rho_t]$
defined by \eqref{eq:Schur_process}, where $\rho_t$ is the 
Plancherel specialization. Note that the series for the normalizing constant
\eqref{eq:Schur_process_normalization} always converges because
\begin{equation*}
	\begin{split}
		Z=\sum_{\lambda\in \mathbb{Y}}
	s_\lambda(c_1,\ldots,c_N)
	s_\lambda(\rho_t)
	&=
	\lim_{K\to\infty}
	\sum_{\lambda\in \mathbb{Y}}
	s_\lambda(c_1,\ldots,c_N)
	s_\lambda\left( \frac{t}{K},\ldots,\frac{t}{K}  \right)
	\\&=
	\lim_{K\to\infty}\prod_{i=1}^{N}\frac{1}{(1-c_i t/K)^K}
	=
	e^{(c_1+\ldots+c_N)t},
	\end{split}
\end{equation*}
and the last expression is an entire function in $t$ and $c_i$.
Since this procedure works for all $N$, we can 
view $\mathbb{P}[\vec c\mid \rho_t]$ as a Schur process of infinite depth,
i.e., a probability measure on $\mathcal{S}$.

When $t=0$, $\mathbb{P}[\vec c\mid \rho_0]$
concentrated on 
the single 
interlacing array 
densely packed
at zero, that is,
with each $\lambda^{(j)}=(0,\ldots,0 )$
($j$ times).

The next result is present in
\cite{BorFerr2008DF}, but alternatively follows from much
earlier constructions involving Robinson-Schensted-Knuth 
correspondences \cite{Vershik1986}, \cite{OConnell2003Trans}, \cite{OConnell2003}.

\begin{theorem}
	\label{thm:BorFerr}
	Fix $t\ge0$ and particle speeds $c_1,c_2,\ldots $, and consider the TASEP 
	as in \Cref{sub:TASEP_def} at time $t$. Then we have equality of joint
	distributions at the fixed time $t$:
	\begin{equation}\label{eq:TASEP_Schur_equality}
		x_i(t)\stackrel{d}{=}\lambda_i^{(i)}-i,\qquad 
		i=1,2,\ldots,
	\end{equation}
	where $\lambda^{(i)}$ are the random partitions coming from the Schur process
	$\mathbb{P}[\vec c\mid \rho_t]$ described above.
\end{theorem}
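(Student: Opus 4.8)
The plan is to prove the coupling \eqref{eq:TASEP_Schur_equality} by constructing a Markov dynamics on the space $\mathcal{S}$ of interlacing arrays whose single-time distribution at time $t$ is exactly the Schur process $\mathbb{P}[\vec c\mid\rho_t]$, and whose projection to the leftmost particles $\lambda_i^{(i)}-i$ reproduces the TASEP of \Cref{sub:TASEP_def}. This is the approach of \cite{BorFerr2008DF}. First I would recall the local push-block dynamics on interlacing arrays: each coordinate $\lambda^{(j)}_k$ carries an independent exponential clock of rate $c_j$, and when it rings, $\lambda^{(j)}_k$ attempts to increase by one; the jump is \emph{blocked} if it would violate $\lambda^{(j)}_k\le\lambda^{(j-1)}_{k-1}$, and it \emph{pushes} the particles $\lambda^{(i)}_{k}$ on levels $i>j$ that would otherwise violate $\lambda^{(i)}_{k}\le\lambda^{(j)}_k$ (equivalently, the minimal consistent configuration is restored instantaneously after each elementary move). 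Since the array starts densely packed at zero and $\rho_0$ concentrates there, only finitely many coordinates are ever free to move before time $t$, so the dynamics is well-defined on $\mathcal{S}$.

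The key step is to show that this dynamics has $\mathbb{P}[\vec c\mid\rho_t]$ as its time-$t$ marginal. I would verify the generator identity: if $M_t$ denotes the law at time $t$, then $\frac{d}{dt}M_t = M_t\,A$ where $A$ is the push-block generator, and one checks directly that $\mathbb{P}[\vec c\mid\rho_t]$ solves this equation with the correct initial condition. Concretely, writing the Schur-process weight as the product in \eqref{eq:Schur_process} with $s_{\lambda^{(N)}}(\rho_t)=t^{|\lambda^{(N)}|}\dim\lambda^{(N)}/|\lambda^{(N)}|!$ and $Z=e^{(c_1+\dots+c_N)t}$, the time derivative of $\log$(weight) is $|\lambda^{(N)}|/t-(c_1+\dots+c_N)$, and this must be matched against the net probability flux into the state under the push-block moves. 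The combinatorial heart is the branching rule \eqref{eq:branching} together with the fact that $s_{\lambda^{(j)}/\lambda^{(j-1)}}(c_j)=c_j^{|\lambda^{(j)}|-|\lambda^{(j-1)}|}\mathds{1}_{\lambda^{(j-1)}\prec\lambda^{(j)}}$ from \eqref{eq:skew_one_var}, so that a single elementary increment of $\lambda^{(j)}_k$ multiplies the weight by exactly $c_j\cdot(\text{ratio from }\rho_t\text{ if }j=N)$, and the push-block rule is precisely what is needed for the resulting measure to stay supported on interlacing arrays while keeping the conditional $\vec c$-Gibbs structure \eqref{eq:c_Gibbs} intact.

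Granting the marginal claim, the final step is to identify the projection. The push-block dynamics has the \emph{consistency} property that the sub-array $\{\lambda^{(j)}_j\}_{j\ge1}$ of leftmost particles evolves autonomously: $\lambda^{(j)}_j$ can only be blocked by $\lambda^{(j-1)}_{j-1}$ (it is the leftmost on its level, so nothing pushes it from the left on its own level, and the coordinate that could block it is $\lambda^{(j-1)}_{j-1}$) and, conversely, $\lambda^{(j)}_j$ when it moves pushes only $\lambda^{(j+1)}_{j+1}$ among the leftmost particles. Setting $x_j:=\lambda^{(j)}_j-j$, the blocking constraint $\lambda^{(j)}_j\le\lambda^{(j-1)}_{j-1}$ becomes $x_j\le x_{j-1}-1$, i.e. $x_j+1\le x_{j-1}$, which is exactly the TASEP exclusion rule, and the clock of rate $c_j$ on $\lambda^{(j)}_j$ becomes the rate-$c_j$ clock of the $j$-th TASEP particle. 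Thus $\{x_j(t)\}$ is distributed as the TASEP of \Cref{sub:TASEP_def}, and combined with the marginal identification we get \eqref{eq:TASEP_Schur_equality}.

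The main obstacle is the second step — proving that the push-block dynamics actually preserves the Schur-process form, i.e. that $\frac{d}{dt}\mathbb{P}[\vec c\mid\rho_t]=\mathbb{P}[\vec c\mid\rho_t]\,A$. This is where one must carefully account for \emph{all} the ways a given interlacing array $\{\lambda^{(j)}\}$ can be reached: not only by a single free increment on some level, but also by a free increment on a lower level that triggers a cascade of pushes. The bookkeeping of these cascades, and checking that the incoming and outgoing rates balance against the explicit derivative $|\lambda^{(N)}|/t-\sum c_i$ for every $N$ simultaneously and compatibly, is the delicate part; alternatively one can cite \cite{BorFerr2008DF} (or the RSK-based constructions of \cite{Vershik1986, OConnell2003Trans, OConnell2003}) for this and only carry out the projection argument in detail.
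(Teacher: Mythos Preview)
The paper does not give its own proof of this theorem: it simply states the result and cites \cite{BorFerr2008DF} together with the earlier RSK constructions \cite{Vershik1986,OConnell2003Trans,OConnell2003}. Your proposal follows exactly the push-block approach of \cite{BorFerr2008DF}, which is the primary reference the paper points to, and the three-step structure (define the dynamics on $\mathcal{S}$, verify it has $\mathbb{P}[\vec c\mid\rho_t]$ as time-$t$ marginal, project to the leftmost diagonal) is correct and standard.

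One small slip in the projection step: you write that $\lambda^{(j)}_j$ ``when it moves pushes only $\lambda^{(j+1)}_{j+1}$ among the leftmost particles.'' In fact $\lambda^{(j)}_j$, when it jumps, pushes $\lambda^{(j+1)}_j$ (same column index $k=j$, one level up), which is \emph{not} a leftmost particle. Conversely, $\lambda^{(j)}_j$ can never be pushed, because the only particle below it with the same column index is $\lambda^{(j-1)}_j$, and level $j-1$ carries at most $j-1$ nonzero parts, so $\lambda^{(j-1)}_j\equiv 0$ and never moves. Hence among the leftmost particles $\{\lambda^{(j)}_j\}_{j\ge1}$ there is \emph{no} pushing at all --- only blocking by $\lambda^{(j-1)}_{j-1}$ --- and that is precisely what makes the projection TASEP rather than PushTASEP. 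Your conclusion is correct; just adjust the stated reason.

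Your identification of the main obstacle (matching the Kolmogorov forward equation for the push-block generator against $\tfrac{d}{dt}\mathbb{P}[\vec c\mid\rho_t]$, including the cascade bookkeeping) is accurate, and deferring it to \cite{BorFerr2008DF} is exactly what the paper does.
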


\begin{remark}
	\label{rmk:anisotropic_KPZ}
	A dynamical version of this result is also proven in 
	\cite{BorFerr2008DF}:
	there exists a continuous-time Markov chain
	on interlacing arrays 
	(even a whole family of them, cf. \cite{BorodinPetrov2013NN}, \cite{BorodinPetrov2013Lect})
	whose action 
	on a Schur process
	$\mathbb{P}[\vec c\mid \rho_t]$
	continuously increases the parameter $t$. 
	We will refer to the dynamics from \cite{BorFerr2008DF}
	as the \emph{push-block process} (see \Cref{def:kpz_growth} below for details).
	For the push-block process on interlacing arrays,
	\eqref{eq:TASEP_Schur_equality}
	holds as equality of joint distributions of Markov processes.
	In other words, \eqref{eq:TASEP_Schur_equality} is also true for multitime
	joint distributions of these processes.
	However, we do not need this dynamical result for most of our constructions.
\end{remark}


\section{Markov maps}
\label{sec:Markov_maps}

This section introduces our main objects --- the Markov maps
$L_\alpha^{(j)}$ and $R_{\alpha}^{(j)}$ which randomly change
the $j$-th row
$\lambda^{(j)}$ in
an interlacing
array while keeping all other rows intact.
These maps
act on $\vec c$-Gibbs measures by permuting spectral parameters.


\subsection{First level}
\label{sub:toy_case}

Let us first describe the maps for $j=1$ (the simplest nontrivial case)
to illustrate their structure and properties.
We use the shorthand notation $\lambda^{(2)}=(\lambda_1,\lambda_2)$ and $\lambda^{(1)}=\varkappa_1$.
The interlacing means
that $\lambda_2\le \varkappa_1\le \lambda_1$.

\begin{definition}[Truncated geometric distribution]
	\label{def:truncated_geometric}
	Let $A\in \mathbb{Z}_{\ge0}$ and 
	$\alpha\in[0,1]$.
	A discrete random variable $Y=Y_\alpha(A)$ on $\left\{ 0,1,\ldots,A  \right\}$ is called
	\emph{truncated geometric} if it has the distribution
	\begin{equation*}
		\mathrm{Prob}(Y=k)=
		\begin{cases}
			(1-\alpha)\,\alpha^k,& 0\le k\le A-1;\\
			\alpha^{A},& k=A.
		\end{cases}
	\end{equation*}
\end{definition}

\begin{definition}[The L and R maps, first level]
	\label{def:LR_maps_one_level}
	For $\alpha\in[0,1]$,
	let $L_\alpha^{(1)}$ be the Markov map\footnote{A Markov
	map is the same as a stochastic matrix or a one-step transition operator of a Markov chain 
	(it is also sometimes called ``link'' in the literature).
	An application of a Markov map is a random 
	update of the underlying configuration.
	At the same time, each 
	Markov map is a deterministic linear operator in the space of 
	probability distributions on configurations.
	When applying a map $M$ to a probability 
	measure $\pi$, we write this as $\pi\mapsto \pi M$.}
	whose action 
	on the pair $\varkappa\prec\lambda$
	does not change $\lambda$, and replaces $\varkappa_1$ as follows:
	\begin{equation*}
		L_\alpha^{(1)}\colon \varkappa_1\mapsto 
		\lambda_2+Y_\alpha(\varkappa_1-\lambda_2).
	\end{equation*}
	The action of $R_\alpha^{(1)}$
	is simply the reflection of $L_\alpha^{(1)}$:
	\begin{equation*}
		R_{\alpha}^{(1)}\colon \varkappa_1\mapsto 
		\lambda_1-Y_\alpha(\lambda_1-\varkappa_1).
	\end{equation*}
	The notation for the L and R operators 
	is suggested by the directions in which they move $\varkappa_1$.
	See \Cref{fig:LR_map} for an illustration.
\end{definition}

\begin{remark}
	\label{rmk:alpha_0_or_1}
	If $\alpha=1$, both $L_1^{(1)}$ and $R_1^{(1)}$ are identity operators.
	If $\alpha=0$, then $Y_0(A)=0$ almost surely, and so the actions of 
	both $L_0^{(1)}$ or $R_0^{(1)}$ lead to the maximal possible displacement of $\varkappa_1$,
	respectively, to the left or to the right.
\end{remark}

\begin{figure}[htpb]
	\centering
	\includegraphics[width=.8\textwidth]{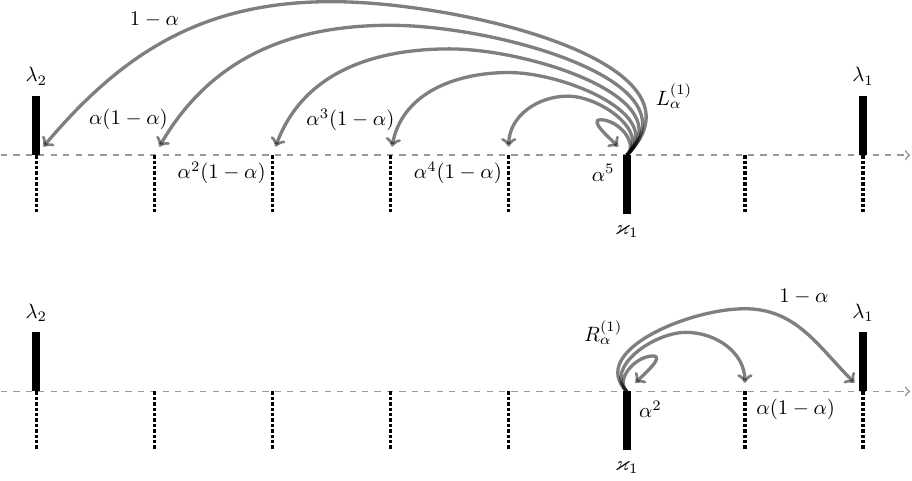}
	\caption{Probabilities of all possible moves in the maps $L_\alpha^{(1)}$ (top) and $R_\alpha^{(1)}$ (bottom).
	The parts of the partitions are represented by bold vertical bars.}
	\label{fig:LR_map}
\end{figure}

The next lemma plays a key role and 
will later generalize to other rows of the interlacing
array.
Denote by $s_i$, $i=1,2,\ldots $ the $i$-th elementary 
permutation of the spectral parameters,
\begin{equation}\label{eq:elementary_permutation}
	s_i\vec c:=(\ldots,c_{i-1},c_{i+1},c_i,c_{i+2},\ldots  ).
\end{equation}

\begin{lemma}
	\label{lemma:main_lemma_level_one}
	If $c_1\ge c_2$ and $c_1\ne 0$, then the Markov operator 
	$L_{c_2/c_1}^{(1)}$
	maps $\mathfrak{G}_{\vec c}$ to $\mathfrak{G}_{s_1\vec c}$.
	If $c_1\le c_2$ and $c_2\ne 0$, then the Markov operator
	$R_{c_1/c_2}^{(1)}$
	maps $\mathfrak{G}_{\vec c}$ to $\mathfrak{G}_{s_1\vec c}$.
\end{lemma}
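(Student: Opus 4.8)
The plan is to verify the claim about $L_{c_2/c_1}^{(1)}$ directly by a one-level conditional computation, and then obtain the statement for $R_{c_1/c_2}^{(1)}$ by the reflection symmetry already noted in \Cref{def:LR_maps_one_level} (the particle-hole / orientation flip that interchanges $L$ and $R$ and reverses the order of the spectral parameters). So the real work is the first sentence. Fix a $\vec c$-Gibbs measure $\pi\in\mathfrak{G}_{\vec c}$ with $c_1\ge c_2$, $c_1\neq0$, and set $\alpha=c_2/c_1\in[0,1]$. Since $L_\alpha^{(1)}$ only touches $\lambda^{(1)}=\varkappa_1$ and leaves all $\lambda^{(j)}$, $j\ge2$, fixed, it suffices to show that the conditional law of $\varkappa_1$ given $\lambda^{(2)}=(\lambda_1,\lambda_2)$ (and everything above, which by the Gibbs property enters only through $\lambda^{(2)}$) is transformed correctly. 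By \Cref{lemma:c_Gibbs_one_level} with $j=1$ (or directly from \eqref{eq:c_Gibbs}), under $\pi$ this conditional law is proportional to $s_{\varkappa_1}(c_1)\,s_{\lambda^{(2)}/\varkappa_1}(c_2)$; using the one-variable formula \eqref{eq:skew_one_var}, $s_{\varkappa_1}(c_1)=c_1^{\varkappa_1}$ and $s_{\lambda^{(2)}/\varkappa_1}(c_2)=c_2^{\,|\lambda^{(2)}|-\varkappa_1}\mathds{1}_{\lambda_2\le\varkappa_1\le\lambda_1}$, so the conditional weight of $\varkappa_1$ is proportional to $(c_1/c_2)^{\varkappa_1}$ on the integer interval $[\lambda_2,\lambda_1]$. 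In other words, under $\pi$ the shifted variable $\varkappa_1-\lambda_2$ is geometric with ratio $c_1/c_2$, truncated to $\{0,1,\ldots,\lambda_1-\lambda_2\}$ — i.e.\ it has the reversed truncated-geometric law.

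Next I would compute the conditional law after applying $L_\alpha^{(1)}$. Given the same upper data, $\pi$ first samples $\varkappa_1$ from the law above, and then $L_\alpha^{(1)}$ replaces it by $\lambda_2+Y_\alpha(\varkappa_1-\lambda_2)$ with $Y_\alpha$ an independent truncated geometric of parameter $\alpha=c_2/c_1$. So I need the one-line identity: if $W$ has the reversed-truncated-geometric law on $\{0,\ldots,A\}$ with ratio $1/\alpha$ (weight $\propto\alpha^{-k}$), and $Y_\alpha(m)$ is truncated geometric with parameter $\alpha$ on $\{0,\ldots,m\}$ for each $m$, then $Y_\alpha(W)$ is truncated geometric with parameter $\alpha$ on $\{0,\ldots,A\}$. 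This is the discrete analogue of the ``geometric of a geometric'' fusion and is an elementary summation: $\mathrm{Prob}(Y_\alpha(W)=k)=\sum_{m\ge k}\mathrm{Prob}(W=m)\,\mathrm{Prob}(Y_\alpha(m)=k)$, and splitting off the boundary term $m=A$ versus $m=k,\ldots,A-1$ telescopes to $(1-\alpha)\alpha^k$ for $k<A$ and $\alpha^A$ for $k=A$. Hence after $L_\alpha^{(1)}$ the conditional law of $\varkappa_1$ given $\lambda^{(2)}=(\lambda_1,\lambda_2)$ is: $\varkappa_1-\lambda_2$ is truncated geometric with parameter $c_2/c_1$ on $\{0,\ldots,\lambda_1-\lambda_2\}$, which is exactly the weight $\propto (c_2/c_1)^{\varkappa_1-\lambda_2}$ on $[\lambda_2,\lambda_1]$, i.e.\ proportional to $s_{\varkappa_1}(c_2)\,s_{\lambda^{(2)}/\varkappa_1}(c_1)$ up to the $\varkappa_1$-independent factor $c_1^{|\lambda^{(2)}|}$.

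Finally I would assemble this into the Gibbs statement for $s_1\vec c$. The pushed-forward measure $\pi L_\alpha^{(1)}$ has the same marginal on $\{\lambda^{(j)}\}_{j\ge2}$ as $\pi$, and conditionally on that marginal its law of $\varkappa_1$ is the one just computed, which is precisely the $j=1$ conditional in the definition \eqref{eq:c_Gibbs} of an $s_1\vec c$-Gibbs measure (parameters $c_1$ and $c_2$ swapped in the first two slots). Because the $\vec c$-Gibbs property is a family of such level-by-level conditional statements and $L_\alpha^{(1)}$ changes only the $j=1$ conditional (leaving the conditionals at levels $j\ge2$, which do not involve $c_1$ or $c_2$ as the ``bottom'' parameter in a way affected by the swap beyond this level, untouched — here one checks that the level-$2$ conditional weight $s_{\lambda^{(2)}/\varkappa_1}(c_2)\,s_{\lambda^{(3)}/\lambda^{(2)}}(c_3)$ becomes $s_{\lambda^{(2)}/\varkappa_1}(c_1)\,s_{\lambda^{(3)}/\lambda^{(2)}}(c_3)$ under the swap, which is exactly what we produced), we conclude $\pi L_\alpha^{(1)}\in\mathfrak{G}_{s_1\vec c}$. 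The statement for $R_{c_1/c_2}^{(1)}$ then follows by the reflection $\varkappa_1\mapsto \lambda_1+\lambda_2-\varkappa_1$ which conjugates $L$ into $R$ and $\vec c$ into its reversal. I expect the only mild subtlety — the ``hard part'' — to be the bookkeeping in this last paragraph: making sure that applying $L_\alpha^{(1)}$ adjusts \emph{both} the level-$1$ and the level-$2$ conditionals in a mutually consistent way so that the result is genuinely $s_1\vec c$-Gibbs rather than merely having the correct level-$1$ slice; the computation above shows the level-$2$ skew factor automatically picks up the swapped parameter, so this consistency holds, but it deserves an explicit line. The truncated-geometric fusion identity is routine and I would present it as a displayed one-line computation.
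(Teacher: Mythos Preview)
Your proposal is correct and follows essentially the same approach as the paper: compute the conditional law of $\varkappa_1$ given $\lambda^{(2)}$ under the $\vec c$-Gibbs property (a geometric distribution on $[\lambda_2,\lambda_1]$ with ratio $\alpha^{-1}$), push it forward by $L_\alpha^{(1)}$ via a direct summation (your ``truncated-geometric fusion'' identity is exactly the paper's displayed computation), and observe the result is the reversed geometric, i.e.\ the $s_1\vec c$-Gibbs conditional. Your extra paragraph of bookkeeping about the level-$2$ conditional is more explicit than the paper, which simply asserts the conclusion; the point you are checking is handled implicitly there by the symmetry $s_{\lambda^{(2)}}(c_1,c_2)=s_{\lambda^{(2)}}(c_2,c_1)$, which makes the marginal on levels $\ge 2$ (given any $\lambda^{(N)}$) identical under $\vec c$- and $s_1\vec c$-Gibbs, so matching the level-$1$ conditional suffices.
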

\begin{proof}
	Let us consider only $L_\alpha^{(1)}$, the case of $R_\alpha^{(1)}$ is analogous.
	By \Cref{rmk:alpha_0_or_1}, when $c_1=c_2$, $L_1^{(1)}$
	is the identity. But in this case $s_1 \vec c=\vec c$, so there is nothing to prove.
	
	We can assume that $c_1>c_2$. Denote $\alpha=c_2/c_1$.
	Using the $\vec c$-Gibbs property, we see that given $\lambda=(\lambda_1,\lambda_2)$, the 
	conditional probability weight of $\varkappa=(\varkappa_1)$ is 
	proportional to $s_\varkappa(c_1)s_{\lambda/\varkappa}(c_2)$, which by \eqref{eq:skew_one_var}
	leads to 
	\begin{equation*}
		\mathrm{Prob}(\varkappa_1\mid \lambda)
		=
		\frac{\alpha^{-\varkappa_1}}{\sum_{k=\lambda_2}^{\lambda_1}\alpha^{-k}}.
	\end{equation*}
	The action of the operator $L_{\alpha}^{(1)}$ on this distribution is 
	readily computed:
	\begin{align*}
		&\sum_{\hat\varkappa_1=\lambda_2}^{\lambda_1}
		\mathrm{Prob}(\hat\varkappa_1\mid \lambda)\cdot L_\alpha^{(1)}(\hat\varkappa_1\to \varkappa_1)
		=
		\frac{1}{\sum_{k=\lambda_2}^{\lambda_1}\alpha^{-k}}
		\left( \alpha^{-\varkappa_1}\cdot\alpha^{\varkappa_1 - \lambda_2}+
		\sum_{\hat\varkappa_1=\varkappa_1+1}^{\lambda_1}\alpha^{-\hat\varkappa_1}\cdot
		(1-\alpha)\alpha^{\varkappa_1- \lambda_2} \right)
		\\&\hspace{30pt}=
		\frac{1}{\sum_{k=\lambda_2}^{\lambda_1}\alpha^{-k}}
		\left( 
			\alpha^{-\lambda_2}
			+
			\frac{\alpha^{-\lambda_1}-\alpha^{-\varkappa_1}}{1-\alpha}
			\cdot (1-\alpha)\alpha^{\varkappa_1-\lambda_2}
		\right)
		=
		\frac{\alpha^{\varkappa_1-\lambda_1-\lambda_2}}{\sum_{k=\lambda_2}^{\lambda_1}\alpha^{-k}}
		=
		\frac{\alpha^{\varkappa_1}}{\sum_{k=\lambda_2}^{\lambda_1}\alpha^{k}}.
	\end{align*}
	The final expression is the conditional probability weight
	of $\varkappa_1$ given $\lambda$ under the $s_1\vec c$-Gibbs property. 
	This completes the proof.
\end{proof}
\begin{remark}
	{\bf1.\/}
	In words, \Cref{lemma:main_lemma_level_one} states that
	the action of the L or R operators
	reverses the geometric distribution
	on the segment
	$[\lambda_2,\lambda_1]$.

	\noindent
	{\bf2.\/}
	Note also that we apply
	$L_{c_2/c_1}^{(1)}$ 
	only if $c_2\le c_1$ (and the opposite ordering restriction for $R_{c_1/c_2}^{(1)}$).
	If $c_1>c_2$ in $L_{c_2/c_1}^{(1)}$, then the algebraic computations in the
	proof of \Cref{lemma:main_lemma_level_one} are still valid. But the 
	operator 
	itself loses probabilistic meaning as
	some of its matrix elements become negative.
\end{remark}

\subsection{Remark. Relation to bijectivization}
\label{sub:bijectivisation_mention}

The Markov maps
of 
\Cref{def:LR_maps_one_level} 
which interchange the spectral parameters 
were suggested by the idea of bijectivization
of the Yang-Baxter equation 
first employed in \cite{BufetovPetrovYB2017} 
(see also \cite{ABB2018stochasticization},
\cite{BufetovMucciconiPetrov2018}). 

First, note that
one can deduce the symmetry of the 
skew Schur polynomials (\Cref{prop:Schur_symmetry})
from the Yang-Baxter equation. 
This argument is present, for example, in
\cite[Theorem 3.5]{Borodin2014vertex}
in a 
$U_q(\widehat{\mathfrak{sl}_2})$ setting
with additional parameters $q,s$ (the Schur case corresponds to 
$q=s=0$).

Next, bijectivization refines the Yang-Baxter equation into a pair of
forward and backward
local Markov moves which randomly update the configuration.
Here the locality means the following. Encode $\varkappa_1$ 
using the occupation variables 
$\{\eta_x\}_{x\in \mathbb{Z}}$, where $\eta_{\varkappa_1}=1$ 
and all other $\eta_x\equiv 0$.
The application of a single local Markov move (forward or backward)
would change one of the occupation variables. 

Then, considering a sequence of forward or backward moves
leads, respectively, to 
the L and R operators.
This can be seen by setting $t=s=0$ in \cite[Figure 4]{BufetovPetrovYB2017},
taking a sequence of these moves, and
passing from the occupation variables (equivalently, vertical arrows
in the notation of that paper)
to the elements of the interlacing
array. For brevity, we do not explain the details of derivation
of the L and R Markov operators from the bijectivization
as an independent proof of the key \Cref{lemma:main_lemma_level_one}
is rather straightforward.

\subsection{General case}

Let us now describe the Markov maps $L_{\alpha}^{(j)}$ and
$R_{\alpha}^{(j)}$ for general $j$. This is an extension of 
Definition \ref{def:LR_maps_one_level}. For the next definition we use the convention $\lambda^{(j)}_0 =
\infty$ and $\lambda^{(j)}_{j+1} = 0$ for all $j \in
\mathbb{Z}_{\geq 0}$ (recall that by \Cref{rmk:interlacing_number_of_rows}
in the $j$-th row of the interlacing array there cannot be more than $j$ nonzero entries).

\begin{definition}[The L and R maps, general case]
	\label{def:LR_maps}
	Fix $\alpha \in [0,1]$ and $j \geq 2$. Let $L_{\alpha}^{(j)}$ be the
	Markov map whose action on interlacing arrays of infinite depth
	$\{\lambda^{(i)}\}_{i\ge1}$
	does not change
	$\lambda^{(i)}$ for $i \neq j$, and replaces $\lambda^{(j)}$ as
	follows:
	\begin{equation*}
		L_{\alpha}^{(j)} : \lambda^{(j)}_{k} \mapsto \max\{\lambda^{(j-1)}_k , \lambda^{(j+1)}_{k+1} \} + Y_\alpha^{(k)}, \qquad k =1, \dots, j,
	\end{equation*}
	where $\{ Y_\alpha^{(k)} \}_{k=1}^j$ is a collection of independent
	truncated geometric random variables with $Y_\alpha^{(k)}$  distributed as
	$Y_{\alpha}\bigl(\lambda^{(j)}_{k} - \max\{\lambda^{(j-1)}_k ,
	\lambda^{(j+1)}_{k+1} \} \bigr)$.
	
	The action of $R_{\alpha}^{(j)}$ is simply the reflection of $L_{\alpha}^{(j)}$:
	\begin{equation*}
		R_{\alpha}^{(j)} : \lambda^{(j)}_{k} \mapsto \min\{ \lambda^{(j-1)}_{k-1}, \lambda^{(j+1)}_{k}\} - Y_\alpha^{(k)}, \qquad k =1, \dots, j,
	\end{equation*}
	where $\{ Y_\alpha^{(k)} \}_{k=1}^j$ is a collection of independent
	truncated geometric random variables with $Y_\alpha^{(k)}$  distributed as
	$Y_{\alpha}\bigl(\min\{  \lambda^{(j-1)}_{k-1}, \lambda^{(j+1)}_{k} \}
	- \lambda^{(j)}_{k} \bigr)$.

	In words, under both $L_\alpha^{(j)}$ and $R_\alpha^{(j)}$
	each $\lambda_k^{(j)}$, $k=1,\ldots,j $,
	is randomly independently moved to the left (resp., to the right)
	within the segment 
	\begin{equation}\label{eq:max_min_interlacing_segment}
		\Bigl[\max\{\lambda^{(j-1)}_k,\lambda^{(j+1)}_{k+1}\},
		\min\{\lambda_{k-1}^{(j-1)},\lambda^{(j+1)}_k \}\Bigr]
	\end{equation}
	to which $\lambda^{(j)}_k$ is constrained by interlacing. 
	The moves of each $\lambda_{k}^{(j)}$
	are exactly the same as on the first level
	and are governed by the truncated geometric random variables.
\end{definition}

The next statement is a generalization of
Lemma \ref{lemma:main_lemma_level_one}. 
Recall that $s_i$
denotes the $i$-th elementary permutation
of the spectral parameters $\vec{c}$.

\begin{proposition}
\label{prop:main_lemma_level_general}
	Fix $j\ge1$.
	If $c_j\ge c_{j+1}$ and $c_j \ne 0$, then the Markov operator
	$L_{c_{j+1}/c_j}^{(j)}$ maps $\mathfrak{G}_{\vec c}$ to
	$\mathfrak{G}_{s_j\vec c}$. If $c_j\le c_{j+1}$ and $c_{j+1}\ne 0$,
	then the Markov operator $R_{c_j/c_{j+1}}^{(j)}$ maps
	$\mathfrak{G}_{\vec c}$ to $\mathfrak{G}_{s_j\vec c}$.
\end{proposition}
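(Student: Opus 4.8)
The plan is to reduce the general-$j$ statement to a collection of independent copies of the first-level computation from \Cref{lemma:main_lemma_level_one}. The key structural input is \Cref{lemma:c_Gibbs_one_level}: under a $\vec c$-Gibbs measure, conditioning on all rows $\lambda^{(i)}$ with $i\neq j$, the conditional law of $\lambda^{(j)}$ is proportional to $s_{\lambda^{(j+1)}/\lambda^{(j)}}(c_{j+1})\,s_{\lambda^{(j)}/\lambda^{(j-1)}}(c_j)$. Using the single-variable skew Schur formula \eqref{eq:skew_one_var} and the interlacing conditions $\lambda^{(j-1)}\prec\lambda^{(j)}\prec\lambda^{(j+1)}$, this weight factors over the coordinates $k=1,\dots,j$: each $\lambda^{(j)}_k$ ranges independently over the segment \eqref{eq:max_min_interlacing_segment}, and the weight of a given value $\lambda^{(j)}_k=m$ is proportional to $(c_{j+1}/c_j)^{-m}=\alpha^{-m}$ (up to an overall constant absorbing the $c_j$ and $c_{j+1}$ contributions from the fixed endpoints), with $\alpha=c_{j+1}/c_j$. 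So the conditional law of $\lambda^{(j)}$ given the rest is a product of $j$ independent truncated-geometric-type distributions, one per coordinate, each living on its own interlacing segment — exactly the first-level situation, but now $j$ times in parallel.

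Next I would invoke exactly the computation in the proof of \Cref{lemma:main_lemma_level_one}: applying $L^{(1)}_\alpha$ to the distribution proportional to $\alpha^{-m}$ on an integer segment $[a,b]$ produces the distribution proportional to $\alpha^{m}$ on the same segment, i.e. it reverses the geometric weighting. Since $L^{(j)}_\alpha$ is defined coordinatewise by independent truncated geometric moves of each $\lambda^{(j)}_k$ within its segment \eqref{eq:max_min_interlacing_segment} — with the same mechanism as the first level — and since the conditional law factors as a product over $k$, the action of $L^{(j)}_{c_{j+1}/c_j}$ sends this product of $\alpha^{-m}$-weighted distributions to the product of $\alpha^{m}$-weighted distributions, again one factor per coordinate. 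That product is precisely the conditional law of $\lambda^{(j)}$ given the other rows under the $s_j\vec c$-Gibbs property: interchanging $c_j\leftrightarrow c_{j+1}$ in the weight $s_{\lambda^{(j+1)}/\lambda^{(j)}}(c_{j+1})\,s_{\lambda^{(j)}/\lambda^{(j-1)}}(c_j)$ flips $\alpha^{-m}$ to $\alpha^{m}$ coordinatewise. Finally, because $L^{(j)}_\alpha$ leaves every row $\lambda^{(i)}$, $i\neq j$, untouched and the Gibbs property is characterized rowwise (\Cref{lemma:c_Gibbs_one_level}), verifying that the $\lambda^{(j)}$-conditional has the correct $s_j\vec c$-form for every conditioning is enough to conclude that $L^{(j)}_{c_{j+1}/c_j}$ maps $\mathfrak G_{\vec c}$ into $\mathfrak G_{s_j\vec c}$. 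The boundary case $c_j=c_{j+1}$ is handled as before: $\alpha=1$, $L^{(j)}_1$ is the identity, and $s_j\vec c=\vec c$. The case of $R^{(j)}_{c_j/c_{j+1}}$ under $c_j\le c_{j+1}$ is the mirror image, using the reflected definition.

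The one point requiring genuine care — and the main obstacle — is the bookkeeping at the coordinates where the interlacing segment \eqref{eq:max_min_interlacing_segment} is governed by the conventions $\lambda^{(j)}_0=\infty$ and $\lambda^{(j)}_{j+1}=0$, together with the constraint (\Cref{rmk:interlacing_number_of_rows}) that row $j$ has at most $j$ nonzero entries. Concretely: one must check that the $\max$ and $\min$ in \eqref{eq:max_min_interlacing_segment} correctly reproduce the true range of each $\lambda^{(j)}_k$ forced by $\lambda^{(j-1)}\prec\lambda^{(j)}\prec\lambda^{(j+1)}$ for \emph{all} $k$, including $k=1$ (where $\lambda^{(j-1)}_0=\infty$ makes the upper bound $\lambda^{(j+1)}_1$) and $k=j$ (where $\lambda^{(j+1)}_{j+1}=0$ makes the lower bound $\max\{\lambda^{(j-1)}_j,0\}$, and $\lambda^{(j-1)}_j=0$ as well since row $j-1$ has at most $j-1$ nonzero parts); and one must verify that $s_{\lambda^{(j)}/\lambda^{(j-1)}}(c_j)=c_j^{|\lambda^{(j)}|-|\lambda^{(j-1)}|}\mathds 1_{\lambda^{(j-1)}\prec\lambda^{(j)}}$ and $s_{\lambda^{(j+1)}/\lambda^{(j)}}(c_{j+1})=c_{j+1}^{|\lambda^{(j+1)}|-|\lambda^{(j)}|}\mathds 1_{\lambda^{(j)}\prec\lambda^{(j+1)}}$ together contribute to the weight of $\lambda^{(j)}_k=m$ exactly the factor $(c_{j+1}/c_j)^{m}$ up to $m$-independent constants. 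Once this indexing is pinned down, the product structure and the rowwise reduction to \Cref{lemma:main_lemma_level_one} make the rest routine, and no new estimate or identity beyond \eqref{eq:skew_one_var} and \Cref{lemma:c_Gibbs_one_level} is needed.
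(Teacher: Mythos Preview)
Your proposal is correct and follows essentially the same route as the paper's proof: invoke \Cref{lemma:c_Gibbs_one_level} to get the conditional law of $\lambda^{(j)}$ given the other rows, use \eqref{eq:skew_one_var} to factor it into independent geometric-type weights $\alpha^{-m}$ on the segments \eqref{eq:max_min_interlacing_segment}, and then reduce coordinatewise to \Cref{lemma:main_lemma_level_one}. One small slip: in your final paragraph the factor should be $(c_{j+1}/c_j)^{-m}$ (equivalently $(c_j/c_{j+1})^{m}$), consistent with your earlier $\alpha^{-m}$.
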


\begin{proof}
	Let us consider $L^{(j)}_{\alpha}$ only; the case of
	$R_{\alpha}^{(j)}$ is analogous. Denote $\alpha = c_{j+1} / c_j$.
	We may assume that $\alpha\ne 1$ 
	as otherwise there is nothing to prove.
	Let us also take $j\ge2$ as the case $j=1$ is \Cref{lemma:main_lemma_level_one}.

	Using the $\vec{c}$-Gibbs property, we see that given
	all $\lambda^{(i)}$ with $i\ne j$,
	the conditional probability weight
	of $\lambda^{(j)}$ is proportional to
	$s_{\lambda^{(j)}} /
	s_{\lambda^{(j-1)}}(c_j) \,s_{\lambda^{(j+1)}/
	\lambda^{(j)}}(c_{j+1})$ (cf. \Cref{lemma:c_Gibbs_one_level}).
	By \eqref{eq:skew_one_var}, this implies
	\begin{equation}\label{eq:c_Gibbs_conditional}
		\mathrm{Prob}
		\left(\lambda^{(j)} \mid \lambda^{(i)}, i\ne j \right) 
		= 
		\prod_{k=1}^j \mathrm{P}
		\left(
		\lambda^{(j)}_k \mid \lambda^{(j-1)}_{k-1}, 
		\lambda^{(j-1)}_{k}, \lambda^{(j+1)}_{k}, 
		\lambda^{(j+1)}_{k+1} 
		\right),
	\end{equation}
	where 
	\begin{equation*}
	\mathrm{P}
	( m 
		\mid 
		a,b,c,d
	)
	=
	\alpha^{-m}\biggl(\,
		\sum_{
			r = \max\{b,d \}
		}^{
			\min\{a,c\}
		} 
	\alpha^{-r} \biggr)^{-1}.
	\end{equation*}
	For $\lambda^{(j)}= (\lambda^{(j)}_1, \dots , \lambda^{(j)}_j)$, the
	operator $L^{(j)}_{\alpha}$ acts on  each $\lambda^{(j)}_k$
	independently. Thus we may write $L^{(j)}_{\alpha}$ as a product of
	local Markov maps
	which act on each segment 
	\eqref{eq:max_min_interlacing_segment}
	in the same manner as in \Cref{sub:toy_case}.
	Similarly to 
	\Cref{lemma:main_lemma_level_one}
	we 
	conclude that the action of $L_\alpha^{(j)}$
	reverses each local geometric distribution 
	$\mathrm{P}(m\mid a,b,c,d)$.
	Therefore, $L_\alpha^{(j)}$ turns \eqref{eq:c_Gibbs_conditional}
	into the conditional probability weight of 
	$\lambda^{(j)}$ under a $s_j\vec c$-Gibbs measure. 
	This completes the proof.
\end{proof}

\section{Action on $q$-Gibbs measures}
\label{sec:action_on_q_Gibbs}

This section shows that suitably composed L maps 
preserve the class of $q$-Gibbs measures on interlacing arrays,
and describes how a $q$-Gibbs measure changes under this action.

\subsection{$q$-Gibbs property} 

Fix $q \in (0,1]$. A $\vec{c}$-Gibbs measure on
the set
$\mathcal{S}$ 
of infinite interlacing arrays
is called \emph{$q$-Gibbs} if  $c_i =q^{i-1}$ for all
$i \in \mathbb{Z}_{\geq 1}$. We denote the set of $q$-Gibbs measures
by~$\mathfrak{G}_q$. 

\begin{remark}
	One can define the volume of an interlacing
	array of finite depth $N$
	by 
	\begin{equation}
		\label{eq:volume}
		\mathrm{vol}(\lambda^{(1)}\prec\ldots\prec\lambda^{(N)}):
		=
		\sum_{i=1}^{N-1}|\lambda^{(i)}|.
	\end{equation}
	Then the $q$-Gibbs property is equivalent to saying that 
	conditioned on $\lambda^{(N)}$, the 
	probability weight of the
	interlacing
	array $\lambda^{(1)}\prec \ldots\prec \lambda^{(N)}$
	is proportional to $q^{-\mathrm{vol}(\lambda^{(1)}\prec\ldots\prec\lambda^{(N)} )}$
	(e.g., see \cite{mkrtchyan2017gue}).
	Note that sometimes (in particular, in \cite{Gorin2010q})
	the term ``$q$-Gibbs measures''
	refers to the elements of $\mathfrak{G}_{q^{-1}}$
	in our notation.
\end{remark}

When $q=1$, the $q$-Gibbs measures correspond to the uniform conditioning property
(cf. \Cref{rmk:uniform_Gibbs}).
Throughout this section we work under the assumption $0<q<1$.

\subsection{Iterated L map}
\label{sub:iterated_L_maps}

When $c_i=q^{i-1}$, we have $c_{i+1}<c_i$ for all $i$.
By \Cref{prop:main_lemma_level_general},
it means that the action of 
$L^{(i)}_{c_{i+1}/c_i}$ permutes the spectral parameters 
$q^{i}$ and $q^{i-1}$.
Iterating such $L^{(i)}$ from $i=1$ to infinity
and keeping track of the permutations of the spectral parameters,
we arrive at the following definition:
\begin{definition}[Iterated L map]
	\label{def:qGibbs_map}
	Let $\mathsf{M}$ be a probability measure on $\mathcal{S}$ and set
	$\mathsf{M}^{(0)} := \mathsf{M}$. Denote, inductively, 
	$\mathsf{M}^{(j)}:=\mathsf{M}^{(j-1)}L^{(j)}_{q^j}$ (see \Cref{fig:braid_q} for an illustration).
	Let $\mathbb{L}^{(q)}$ be the Markov map 
	which acts on probability measures
	on $\mathcal{S}$ by
	\begin{equation*}
		\mathbb{L}^{(q)}:
		\{ \mathsf{M}(\lambda^{(1)} , \dots, \lambda^{(N)}) \}_{N\ge1}
		\mapsto 
		\{ \mathsf{M}^{(N+1)}(\lambda^{(1)} , \dots, \lambda^{(N)}) \}_{N\ge1}.
	\end{equation*}
	Let us explain why $\mathbb{L}^{(q)}$ is well-defined.
	Recall that a
	probability measure on $\mathcal{S}$
	is uniquely determined by 
	a family of compatible joint distributions 
	of $(\lambda^{(1)},\ldots,\lambda^{(N)})$ 
	(cf. \Cref{sub:infinite_depth}).
	Next, for all $K > N$ we have $\mathsf{M}^{(K)}(\lambda^{(1)} , \dots, \lambda^{(N)}) 
	= 
	\mathsf{M}^{(N+1)}(\lambda^{(1)}, \dots, \lambda^{(N)})$.
	This guarantees that the collection of 
	measures 
	$\{ \mathsf{M}^{(N+1)}(\lambda^{(1)} , \dots, \lambda^{(N)}) \}_{N\ge1}$
	is indeed compatible, and thus defines a measure on $\mathcal{S}$
	which we denote by $\mathsf{M}\,\mathbb{L}^{(q)}$.
\end{definition}

\begin{figure}[htpb]
	\centering
	\includegraphics{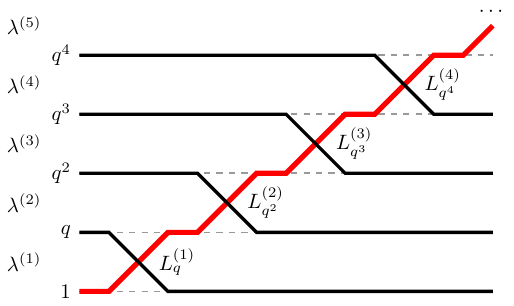}
	\caption{Construction of the map  $\mathbb{L}^{(q)}$. 
	The spectral parameters $q^j$ correspond to 
	the action on 
	$q$-Gibbs measures
	considered in 
	\Cref{sub:action_of_L_on_q_Gibbs},
	and the lines indicate the swapping of the 
	spectral parameters after each $j$-th map
	$L^{(j)}_{q^j}$.}
	\label{fig:braid_q}
\end{figure}

\subsection{$q$-Gibbs harmonic families}

Let $\mathsf{M}$ be 
a $q$-Gibbs measure on $\mathcal{S}$. By the $q$-Gibbs property, 
for each $N\ge1$
the probability weight of
$\lambda^{(1)},\ldots,\lambda^{(N)}$
is represented as a product of the marginal probability
weight
of $\lambda^{(N)}$ and a $q$-Gibbs factor
corresponding to the conditional distribution of
$\lambda^{(1)}\prec \ldots\prec\lambda^{(N-1)} $ given $\lambda^{(N)}$.
This allows to write
\begin{equation*}
	\mathsf{M}(\lambda^{(1)}\prec\ldots\prec\lambda^{(N)})=
	s_{\lambda^{(1)} } (1) 
	s_{\lambda^{(2)}/ \lambda^{(1)}}(q)
	\ldots 
	s_{\lambda^{(N)} / \lambda^{(N-1)}}(q^{N-1})
	\cdot \varphi_N(\lambda^{(N)}),
\end{equation*}
where $\varphi_N$ is a function on the $N$-th level of the array defined as
\begin{equation}\label{eq:phi_N_def}
	\varphi_N(\nu)=\frac{\mathsf{M}(\lambda^{(N)}=\nu)}{s_{\nu}(1,q,\ldots,q^{N-1} )}.
\end{equation}
Because the functions $\varphi_N$ for different $N$ come from the same 
$q$-Gibbs measure $\mathsf{M}$, they must be compatible. 
This compatibility relation reads
\begin{equation}
	\label{eq:q_Gibbs_harmonic}
	\sum_{\lambda\colon \mu\prec \lambda} 
	\varphi_{N}(\lambda) \,
	s_{\lambda / \mu}(q^{N-1})
	= 
	\varphi_{N-1}(\mu)
\end{equation}
for all $N\ge1$ and all $\mu=(\mu_1,\ldots,\mu_{N-1})$ on the $(N-1)$-st level of the array
(at the zeroth level we set $\varphi_0(\varnothing)=1$, 
by agreement).
We call a family of functions $\{\varphi_N\}$
satisfying \eqref{eq:q_Gibbs_harmonic} and
$\varphi_0(\varnothing)=1$
a 
\emph{$q$-Gibbs harmonic family}.
The term ``harmonic'' comes from the Vershik--Kerov theory of the boundary 
of branching graphs (e.g., see 
\cite{Kerov1998}).
Clearly, a $q$-Gibbs measure on $\mathcal{S}$ 
is uniquely determined by 
its associated $q$-Gibbs harmonic family $\{\varphi_N\}$.

\subsection{Action of the iterated L map on $q$-Gibbs measures}
\label{sub:action_of_L_on_q_Gibbs}
	
If $\mathsf{M}\in\mathfrak{G}_q$,
then the action of $\mathbb{L}^{(q)}$ (that is, the 
sequence of the Markov maps $L^{(j)}_{q^j}$)
on $\mathsf{M}$ swaps the spectral parameters
as in \Cref{fig:braid_q},
moving $c_1=1$ all the way up to infinity
where it ``disappears''.
The resulting spectral parameters
$(q,q^2,q^3,\ldots )$ are proportional
to the original ones. This suggests that
$\mathbb{L}^{(q)}$ should preserve the class
of $q$-Gibbs measures. The next result 
shows that this is 
indeed the case, 
and also describes the action of $\mathbb{L}^{(q)}$
on $\mathfrak{G}_q$ in the language of harmonic families.

\begin{theorem}
	\label{thm:main_prop_qGibbs}
	The Markov map $\mathbb{L}^{(q)}$ maps preserves 
	$\mathfrak{G}_q$, the set of $q$-Gibbs measures on $\mathcal{S}$.
	More precisely, 
	$\mathbb{L}^{(q)}$ maps each 
	$q$-Gibbs harmonic family 
	$\{ \varphi_N\}_{N \in \mathbb{Z}_{\geq 1}}$ to a new 
	$q$-Gibbs harmonic family 
	$\{\hat{\varphi}_N \}_{N \in \mathbb{Z}_{\geq 1}}$ as follows:
	\begin{equation}
		\label{eq:action_q_Gibbs_main_thm}
		\hat{\varphi}_N (\mu) 
		=
		q^{|\mu|} 
		\sum_{\lambda} 
		\varphi_{N+1}(\lambda) s_{\lambda / \mu}(1)
		=q^{|\mu|}
		\sum_{\lambda\colon \mu\prec \lambda} 
		\varphi_{N+1}(\lambda) 
		.
	\end{equation}
\end{theorem}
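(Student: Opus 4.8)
The plan is to track carefully what happens to the harmonic family of a $q$-Gibbs measure under each of the maps $L^{(j)}_{q^j}$ constituting $\mathbb{L}^{(q)}$, and then assemble these partial results into the final formula \eqref{eq:action_q_Gibbs_main_thm}. The starting point is \Cref{prop:main_lemma_level_general}: since the spectral parameters of a $q$-Gibbs measure are $\vec c = (1,q,q^2,\ldots)$ and we use $\alpha = q^{j}/q^{\,j-1}\cdot q^{\,j-1} = q^{j} \cdot$ (wait --- rather $\alpha$ should be $c_{j+1}/c_j$ evaluated on the \emph{current} spectral parameters, which after the first $j-1$ swaps are $(q,q^2,\ldots,q^{j-1},1,q^{j},q^{j+1},\ldots)$, so the ratio at the $j$-th position is $1/q^{j-1}\cdots$; the clean statement is that each $L^{(j)}_{q^{j}}$ applies \Cref{prop:main_lemma_level_general} with $c_j = 1$, $c_{j+1} = q^{j-1}$ in the locally relabeled parameters, giving $\alpha = q^{j-1}$). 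Concretely, after applying $L^{(1)}_{q}, \ldots, L^{(j)}_{q^{j}}$ in order, the intermediate measure $\mathsf M^{(j)}$ is $\vec c\,'$-Gibbs with $\vec c\,' = (q,q^2,\ldots,q^{j},1,q^{j},q^{j+1},\ldots)$ --- i.e., the bottommost parameter $1$ has been bubbled up past positions $1,\ldots,j$. The first thing I would do is verify this claim by induction on $j$ using \Cref{prop:main_lemma_level_general} together with the observation (already in the paper) that $\mathfrak G_{\vec c} = \mathfrak G_{a\vec c}$, so that $(q,q^2,\ldots)$ is the "same" Gibbs class as the rescaled $(1,q,q^2,\ldots)$.

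\textbf{Second}, I would translate the statement ``$\mathbb L^{(q)}$ preserves $\mathfrak G_q$'' into the language of harmonic families via the compatibility identity of \Cref{def:qGibbs_map}: for $K > N$ one has $\mathsf M^{(K)}(\lambda^{(1)},\ldots,\lambda^{(N)}) = \mathsf M^{(N+1)}(\lambda^{(1)},\ldots,\lambda^{(N)})$, so the limiting measure $\mathsf M\,\mathbb L^{(q)}$ has $N$-level marginal equal to the $N$-level marginal of $\mathsf M^{(N+1)}$. Hence it suffices to compute the marginal distribution of $\lambda^{(N)}$ under $\mathsf M^{(N+1)}$ and divide by $s_{\lambda^{(N)}}(1,q,\ldots,q^{N-1})$ to read off $\hat\varphi_N$. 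Since $\mathsf M^{(N+1)}$ is $\vec c\,'$-Gibbs with $\vec c\,' = (q,q^2,\ldots,q^{N+1},1,q^{N+1},\ldots)$, \Cref{prop:marginal_Schur} (applied to the depth-$N$ truncation, whose spectral parameters are $(q,q^2,\ldots,q^N)$) says the marginal of $\lambda^{(N)}$ is the Schur measure with first specialization $(q,q^2,\ldots,q^N)$. But we must compare it to $\mathsf M$'s marginal, whose first specialization is $(1,q,\ldots,q^{N-1})$: these two $N$-tuples are related by $(q,\ldots,q^N) = q\cdot(1,q,\ldots,q^{N-1})$, and by homogeneity $s_\lambda(q,\ldots,q^N) = q^{|\lambda|}s_\lambda(1,q,\ldots,q^{N-1})$. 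The remaining bookkeeping is to express the marginal of $\lambda^{(N)}$ under $\mathsf M^{(N+1)}$ in terms of $\varphi_{N+1}$: by definition $\mathsf M(\lambda^{(N+1)} = \lambda) = s_\lambda(1,q,\ldots,q^N)\varphi_{N+1}(\lambda)$, and summing out $\lambda^{(N+1)}$ from the depth-$(N+1)$ measure $\mathsf M^{(N+1)}$ (which still agrees with $\mathsf M$ on levels $\le N$ before the last swap, by the telescoping in \Cref{def:qGibbs_map}) gives, via the branching rule $s_\lambda(1,q,\ldots,q^N) = \sum_\mu s_\mu(1,q,\ldots,q^{N-1})s_{\lambda/\mu}(q^N)$ and the one-variable formula \eqref{eq:skew_one_var}, exactly the needed combination $\sum_{\mu\prec\lambda}\varphi_{N+1}(\lambda)s_{\lambda/\mu}(1)$ up to powers of $q$.

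\textbf{Third}, I would assemble: combining the homogeneity factor $q^{|\mu|}$ with the marginal computation yields $\hat\varphi_N(\mu) = q^{|\mu|}\sum_{\lambda\colon\mu\prec\lambda}\varphi_{N+1}(\lambda)s_{\lambda/\mu}(1)$, which is precisely \eqref{eq:action_q_Gibbs_main_thm}; the equality of the two right-hand sides there is just \eqref{eq:skew_one_var} again, $s_{\lambda/\mu}(1) = \mathds 1_{\mu\prec\lambda}$. One should separately verify that $\{\hat\varphi_N\}$ is genuinely a $q$-Gibbs harmonic family --- i.e., satisfies \eqref{eq:q_Gibbs_harmonic} and $\hat\varphi_0(\varnothing)=1$ --- but this is automatic from the fact that $\mathsf M\,\mathbb L^{(q)}$ is a well-defined probability measure on $\mathcal S$ (compatibility was checked in \Cref{def:qGibbs_map}) and is $q$-Gibbs (from the second step); alternatively one can check \eqref{eq:q_Gibbs_harmonic} directly by plugging in the formula and using the Schur branching rule plus $s_{\lambda/\mu}(q^{N-1}\cdot 1) = q^{(N-1)(|\lambda|-|\mu|)}s_{\lambda/\mu}(1)$.

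\textbf{The main obstacle} I anticipate is the careful bookkeeping of spectral-parameter permutations through the infinite iteration --- making rigorous the claim that $\mathsf M^{(j)}$ is $\vec c\,'$-Gibbs for the displaced parameter sequence, and in particular that the map $L^{(j)}_{q^{j}}$ really applies \Cref{prop:main_lemma_level_general} with the correct value of $\alpha$ once the preceding $j-1$ swaps have relabeled everything (the subtlety is that \Cref{def:qGibbs_map} uses the \emph{fixed} exponent $q^{j}$ in $L^{(j)}_{q^{j}}$, and one has to confirm this matches the ratio of the two parameters being swapped at that stage, namely $1$ and $q^{\,j-1}$, giving $\alpha=q^{\,j-1}$ --- so there is an index-shift one must get exactly right, or else reconcile the notation). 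Once that is pinned down, the Schur-function identities are routine; the other mild point is justifying that one may freely pass between the infinite-depth measure and its finite-depth truncations when invoking \Cref{prop:marginal_Schur} and the branching rule, which is handled by the Kolmogorov-extension compatibility already established in \Cref{sub:infinite_depth} and \Cref{def:qGibbs_map}.
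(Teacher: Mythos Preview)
Your approach is essentially the same as the paper's: iterate \Cref{prop:main_lemma_level_general} to see that $\mathsf M^{(N)}$ is $\vec c\,'$-Gibbs for the bubbled-up parameter sequence, then read off the level-$N$ marginal in terms of $\varphi_{N+1}$ using the branching rule and homogeneity. The paper carries this out by conditioning explicitly on $\lambda^{(N+1)}=\lambda$ and tracking the joint law of $(\theta^{(N-1)},\theta^{(N)})$, but the logic is identical.

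Your ``main obstacle'' is a phantom, and in fact you have the index wrong in your worry. After the first $j-1$ swaps the spectral parameters are $(q,q^2,\ldots,q^{j-1},1,q^{j},q^{j+1},\ldots)$: position $j$ holds $1$ and position $j+1$ holds the \emph{original} $c_{j+1}=q^{j}$ (untouched by any previous swap). So the ratio at stage $j$ is $q^{j}/1=q^{j}$, exactly matching the exponent in $L^{(j)}_{q^{j}}$ from \Cref{def:qGibbs_map}. There is no discrepancy to reconcile. (Relatedly, your displayed sequences $(q,\ldots,q^{j},1,q^{j},\ldots)$ repeat $q^{j}$; the correct tail is $1,q^{j+1},q^{j+2},\ldots$.)

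Two minor points. First, \Cref{prop:marginal_Schur} is stated for Schur processes, not general $\vec c$-Gibbs measures; what you actually need (and correctly use a few lines later) is just the branching rule to sum out levels $1,\ldots,N-1$ and the definition of $\varphi_{N+1}$ to sum out level $N+1$. Second, your claim that harmonicity of $\{\hat\varphi_N\}$ is ``automatic'' is fine but requires saying why $\mathsf M\,\mathbb L^{(q)}$ is $q$-Gibbs: the conditional law of $\lambda^{(1)},\ldots,\lambda^{(N-1)}$ given $\lambda^{(N)}$ under $\mathsf M^{(N)}$ is the $(q,\ldots,q^{N})$-Gibbs conditional, which equals the $(1,\ldots,q^{N-1})$-Gibbs conditional by the scale invariance $\mathfrak G_{\vec c}=\mathfrak G_{a\vec c}$. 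The paper instead verifies \eqref{eq:q_Gibbs_harmonic} for $\hat\varphi_N$ by a direct four-line computation.
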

\begin{proof}
	The second equality in \eqref{eq:action_q_Gibbs_main_thm} immediately
	follows from \eqref{eq:skew_one_var}. 
	Let us first explain why the sum in \eqref{eq:action_q_Gibbs_main_thm} is finite.
	We have by the definition 
	\eqref{eq:phi_N_def}
	of $\varphi_N$:
	\begin{equation}
		\label{eq:action_q_Gibbs_main_thm_proof1}
		1=
		\sum_{\lambda}
		\varphi_{N+1}(\lambda)\,s_\lambda(1,q,\ldots,q^{N})
		\ge
		\sum_{\lambda}
		\varphi_{N+1}(\lambda)\,
		1^{\lambda_1}q^{\lambda_2}\ldots(q^N)^{\lambda_{N+1}},
	\end{equation}
	where we bounded the Schur polynomial from below 
	by taking 
	one of its monomials (since all the monomials are nonnegative).
	The condition $\mu\prec\lambda$ in 
	\eqref{eq:action_q_Gibbs_main_thm}
	means that only the sum over $\lambda_1$ 
	in 
	\eqref{eq:action_q_Gibbs_main_thm} is over an infinite set,
	and it thus converges thanks to 
	\eqref{eq:action_q_Gibbs_main_thm_proof1}.
	
	Now let $\{\lambda^{(i)}\}$ be a random interlacing
	array distributed according to the $q$-Gibbs measure
	coming from $\{\varphi_N\}$.
	Let the random array $\{\theta^{(i)}\}$ be the image
	of $\{\lambda^{(i)}\}$
	under $\mathbb{L}^{(q)}$.
	Fix $N\ge1$.
	The distribution of $\theta_N$
	(described by the function $\hat\varphi_N$ which we aim to compute)
	is a result of applying the 
	sequence of Markov maps
	$L_q^{(1)},\ldots,L_{q^{N}}^{(N)}$ (in this order).
	Because the last of these operators depends on $\lambda^{(N+1)}$,
	we see that the distribution of $\theta^{(N)}$
	is not determined only by the joint distribution of $\lambda^{(1)},\ldots,\lambda^{(N)}$.
	In other words, to compute $\hat\varphi_N$ 
	we need to first extend $\varphi_N$ to $\varphi_{N+1}$, and utilize the $q$-Gibbs property. 
	
	Let us apply this idea.
	Fix $\lambda^{(N+1)}=\lambda$. This condition
	completely determines
	the conditional joint distribution of $\lambda^{(1)},\ldots,\lambda^{(N)}$ via the $q$-Gibbs property.
	By iterating \Cref{prop:main_lemma_level_general}, we see that
	after applying the Markov maps $L_q^{(1)},\ldots,L_{q^{N-1}}^{(N-1)} $, the joint distribution of 
	$\lambda^{(N)}$ and $\theta^{(N-1)}$, conditioned on $\lambda^{(N+1)}=\lambda$
	comes from the $(q,q^2,\ldots,q^{N-1},1,q^{N})$-Gibbs property:
	\begin{equation*}
		\mathrm{Prob}\bigl( \lambda^{(N)}=\varkappa,\;\theta^{(N-1)}=\nu\mid \lambda^{(N+1)}=\lambda \bigr)
		=
		\frac{s_\nu(q,q^2,\ldots,q^{N-1} ) s_{\varkappa/\nu}(1) s_{\lambda/\varkappa}(q^{N})}{s_\lambda(1,q,\ldots,q^{N} )}.
	\end{equation*}
	After the application of $L^{(N)}_{q^N}$, the partition
	$\lambda^{(N)}$ turns into $\theta^{(N)}$,
	and we similarly have
	\begin{equation*}
		\mathrm{Prob}\bigl( \theta^{(N)}=\mu,\;\theta^{(N-1)}=\nu\mid \lambda^{(N+1)}=\lambda \bigr)
		=
		\frac{s_\nu(q,q^2,\ldots,q^{N-1})s_{\mu/\nu}(q^N)s_{\lambda/\mu}(1)}{s_\lambda(1,q,\ldots,q^N )}.
	\end{equation*}
	Let us rewrite the last expression to
	compare it to the $q$-Gibbs conditional distribution.
	In the numerator, due to the homogeneity 
	of Schur and skew Schur polynomials, we have:
	\begin{equation}
		\label{eq:power_computation}
		\begin{split}
			s_{\lambda/\mu}(1)s_{\mu/\nu}(q^N)s_\nu(q,\ldots,q^{N-1} )
			&= 
			q^{|\nu|} s_\nu(1,q,\ldots,q^{N-2} ) q^{|\mu|-|\nu|}s_{\mu/\nu}(q^{N-1})  s_{\lambda/\mu}(1)
			\\
			&= 
			q^{|\mu|} s_\nu(1,q,\ldots,q^{N-2} ) s_{\mu/\nu}(q^{N-1}) s_{\lambda/\mu}(1).
		\end{split}
	\end{equation}
	To extract from this the marginal distribution of $\theta^{(N)}$
	(that is, to get to $\hat \varphi_{N}$),
	we need to multiply
	\eqref{eq:power_computation}
	by $\mathrm{Prob}(\lambda^{(N+1)}=\lambda)/s_\lambda(1,\ldots,q^N)$ (which is exactly $\varphi_{N+1}(\lambda)$)
	and sum the resulting expression over both $\lambda$ and $\nu$. We have
	\begin{equation*}
		\mathrm{Prob}(\theta^{(N)}=\mu)=
		\sum_{\nu,\lambda\colon \nu\prec \mu\prec\lambda}
		q^{|\mu|} s_\nu(1,\ldots,q^{N-2} ) s_{\mu/\nu}(q^{N-1}) s_{\lambda/\mu}(1)
		\,
		\varphi_{N+1}(\lambda).
	\end{equation*}
	The sum over $\nu$ is simplified using
	the branching rule \eqref{eq:branching}, and so 
	\begin{equation*}
		\frac{\mathrm{Prob}(\theta^{(N)}=\mu)}{s_\mu(1,q,\ldots,q^{N-1} )}=
		q^{|\mu|}\sum_{\lambda\colon \mu\prec \lambda}
		\varphi_{N+1}(\lambda)\,s_{\lambda/\mu}(1).
	\end{equation*}
	We see that at the level of marginal distributions, the family 
	$\left\{ \varphi_N \right\}$ turns into $\left\{ \hat \varphi_N \right\}$,
	where
	$\hat \varphi_N$ is defined by 
	\eqref{eq:action_q_Gibbs_main_thm}.
	
	It remains to show that the new family $\{\hat \varphi_N\}$
	satisfies the $q$-Gibbs harmonicity. That is, we want to show for all $N$ that
	\begin{equation*}
		\sum_\mu\hat \varphi_N(\mu)s_{\mu/\nu}(q^{N-1})=
		\hat \varphi_{N-1}(\nu)
		=
		q^{|\nu|}
		\sum_{\varkappa}\varphi_N(\varkappa)s_{\varkappa/\nu}(1)
	\end{equation*}
	(the second equality is simply the definition of $\hat \varphi_{N-1}$).
	We have
	\begin{align*}
		\sum_\mu\hat \varphi_N(\mu)s_{\mu/\nu}(q^{N-1})
		&=
		\sum_{\mu,\lambda}
		\varphi_{N+1}(\lambda)s_{\lambda/\mu}(1)q^{|\mu|}
		s_{\mu/\nu}(q^{N-1})
		\\
		&=
		q^{|\nu|}
		\sum_{\mu,\lambda}
		\varphi_{N+1}(\lambda)s_{\lambda/\mu}(1)
		s_{\mu/\nu}(q^{N})
		\\
		&=
		q^{|\nu|}
		\sum_{\lambda}
		\varphi_{N+1}(\lambda)s_{\lambda/\nu}(1,q^N)
		\\
		&=
		q^{|\nu|}
		\sum_{\lambda,\varkappa}
		\varphi_{N+1}(\lambda)s_{\lambda/\varkappa}(q^N)s_{\varkappa/\nu}(1)
		\\
		&=
		q^{|\nu|}
		\sum_{\varkappa}
		\varphi_{N}(\varkappa)s_{\varkappa/\nu}(1),
	\end{align*}
	as desired.
	In the last step we used the harmonicity of the original family 
	$\left\{ \varphi_N \right\}$. This completes the proof.
\end{proof}

\begin{remark}
	Note that \Cref{thm:main_prop_qGibbs}
	fundamentally relies on the fact that the $q$-Gibbs measure lives
	on an infinite array. 
	Indeed, for an array of finite depth it is not possible to 
	move the spectral parameter $1$ all the way up to infinity.
	In the proof of \Cref{thm:main_prop_qGibbs}
	we use the fact that the array has infinite depth
	when we extend $\varphi_N$ to $\varphi_{N+1}$.
	The case of arrays of finite depth is discussed in 
	\Cref{sec:extensions} below.
\end{remark}

\subsection{Application to Schur processes and TASEP with geometric speeds}
\label{sub:q_schur_TASEP}

Schur processes $\mathbb{P}[\vec c\mid \rho_t]$
with $c_i=q^{i-1}$
are particular cases of $q$-Gibbs measures
with
\begin{equation*}
	\varphi_N(\lambda)=
	e^{-t(1+q+\ldots+q^{N-1} )}\,
	\frac{s_\lambda(\rho_t)s_\lambda(1,q,\ldots,q^{N-1} )}
	{s_\lambda(1,q,\ldots,q^{N-1} )}
	=
	e^{-t(1+q+\ldots+q^{N-1} )}
	s_\lambda(\rho_t),
\end{equation*}
where we took into account the normalization of the Schur measures.
The Markov map $\mathbb{L}^{(q)}$ acts on these Schur processes
as follows:
\begin{align*}
	\hat \varphi_N(\mu)
	&=
	q^{|\mu|}
	e^{-t(1+q+\ldots+q^{N} )}
	\sum_{\lambda}s_\lambda(\rho_t)\,s_{\lambda/\mu}(1)
	\\&
	=
	e^{t}
	e^{-t(1+q+\ldots+q^{N} )}
	q^{|\mu|}
	s_\mu(\rho_t)
	\\&=
	e^{-qt(1+q+\ldots+q^{N-1} )}
	s_\mu(\rho_{q\cdot t}),
\end{align*}
where we used the skew Cauchy identity
(\eqref{eq:cauchy} with $\varkappa=\varnothing$)
and the homogeneity of the Schur polynomials 
(both properties
clearly survive the Plancherel limit \eqref{eq:Plancherel_limit}).
Therefore, we have
\begin{equation}
	\label{eq:action_q_t}
	\mathbb{P}[(1,q,q^2,\ldots )\mid \rho_t]
	\,\mathbb{L}^{(q)}
	=
	\mathbb{P}[(1,q,q^2,\ldots )\mid \rho_{qt}].
\end{equation}

Recall that by \Cref{thm:BorFerr},
the joint distribution of the quantities $\{\lambda^{(N)}_N-N \}_{N\ge1}$
under the Schur process
$\mathbb{P}[(1,q,q^2,\ldots )\mid \rho_t]$
is the same as the joint distribution of the particle 
locations $\{x_N(t)\}_{N\ge1}$ at time $t$
of the TASEP with particle
speeds $c_i=q^{i-1}$
and the step initial configuration.
Denote this joint distribution of particles
$\{x_N(t)\}$
by $\upmu_t^{(q)}$.

Our next observation is that the action of 
the Markov map $\mathbb{L}^{(q)}$ 
on the random interlacing array $\{\lambda^{(N)}\}_{N\ge1}$
can be projected to the leftmost components 
$\{\lambda^{(N)}_N \}_{N\ge1}$,
and the result is still a Markov map. In more detail,
let $\{\theta^{(N)}\}_{N\ge1}$
be the random interlacing array which is the image of $\{\lambda^{(N)}\}_{N\ge1}$
under $\mathbb{L}^{(q)}$. 
From the very definition of $\mathbb{L}^{(q)}$,
we see that 
conditioned on $\{\lambda^{(N)}\}_{N\ge1}$,
the distribution of 
$\{\theta^{(N)}_N \}_{N\ge1}$ depends only on the leftmost components
$\{\lambda^{(N)}_N \}_{N\ge1}$, and not on the rest of the array $\lambda$.
Let us describe this projection of $\mathbb{L}^{(q)}$ explicitly
in terms of locations of the TASEP particles
$x_N$ (via the identification $x_N=\lambda^{(N)}_N-N$).
Recall from \Cref{sub:TASEP_def}
that $\mathcal{C}$ stands for the space of 
left-packed, right-finite
particle configurations on $\mathbb{Z}$.

\begin{definition}
	\label{def:L_projection}
	Let $0<q<1$. We aim to define a Markov map $\mathbf{L}^{(q)}$ on $\mathcal{C}$.
	Fix
	a configuration $x_1>x_2>\ldots $ in $\mathcal{C}$.
	By definition, its 
	random image
	$\hat x_1>\hat x_2>\ldots $ under the action of $\mathbf{L}^{(q)}$
	is
	\begin{equation*}
		\hat x_i=x_{i+1}+1+Y_{q^{\scriptstyle i}}(x_i-x_{i+1}-1),\qquad 
		i=1,2,\ldots, 
	\end{equation*}
where the $Y_{q^{\scriptstyle i}}$'s are independent truncated
	geometric random variables (see \Cref{def:truncated_geometric}).
\end{definition}

\begin{remark}
	A homogeneous version of $\mathbf{L}^{(q)}$
	appeared in
	\cite{Brankov_1996},
	it is solvable through coordinate Bethe Ansatz
	\cite{rakos2005current}.
\end{remark}

\Cref{thm:main_prop_qGibbs}, identity \eqref{eq:action_q_t}, 
and the fact that 
$\mathbf{L}^{(q)}$ is a projection of
$\mathbb{L}^{(q)}$ 
immediately imply the following result:
\begin{theorem}
	\label{thm:action_on_q_geom_TASEP}
	For any $t\ge 0$, we have
	\begin{equation*}
		\upmu_t^{(q)}\,\mathbf{L}^{(q)}=\upmu_{qt}^{(q)},
	\end{equation*}
	where $\upmu_t^{(q)}$ is the distribution of the TASEP
	with geometric rates (with ratio $q$) and step initial configuration, 
	and $\mathbf{L}^{(q)}$ is the Markov map 
	from
	\Cref{def:L_projection}.
\end{theorem}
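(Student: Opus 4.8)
The plan is to deduce the statement by realizing $\mathbf{L}^{(q)}$ as a projection of the iterated L map $\mathbb{L}^{(q)}$ from \Cref{def:qGibbs_map} and then invoking the already computed action \eqref{eq:action_q_t}. Recall from \Cref{thm:BorFerr} that $\upmu_t^{(q)}$ is the pushforward of the Schur process $\mathbb{P}[(1,q,q^2,\ldots)\mid\rho_t]$ under the map $\pi\colon\mathcal{S}\to\mathcal{C}$ which sends an interlacing array $\{\lambda^{(N)}\}_{N\ge1}$ to the particle configuration $\{\lambda^{(N)}_N-N\}_{N\ge1}$ of its leftmost entries. Granting the intertwining $\pi_*\bigl(\mathsf{M}\,\mathbb{L}^{(q)}\bigr)=\bigl(\pi_*\mathsf{M}\bigr)\mathbf{L}^{(q)}$ for every probability measure $\mathsf{M}$ on $\mathcal{S}$, we obtain
\[
\upmu_t^{(q)}\,\mathbf{L}^{(q)}
=\pi_*\bigl(\mathbb{P}[(1,q,q^2,\ldots)\mid\rho_t]\,\mathbb{L}^{(q)}\bigr)
=\pi_*\bigl(\mathbb{P}[(1,q,q^2,\ldots)\mid\rho_{qt}]\bigr)
=\upmu_{qt}^{(q)},
\]
where the middle equality is \eqref{eq:action_q_t} (a consequence of \Cref{thm:main_prop_qGibbs} together with the Plancherel computation preceding \eqref{eq:action_q_t}) and the two outer equalities are \Cref{thm:BorFerr}.

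The heart of the argument is the verification that the action of $\mathbb{L}^{(q)}$ on the leftmost column $\{\lambda^{(N)}_N\}_{N\ge1}$ depends only on that column and reproduces \Cref{def:L_projection}. By \Cref{def:qGibbs_map}, $\mathbb{L}^{(q)}$ applies the maps $L^{(1)}_q,L^{(2)}_{q^2},L^{(3)}_{q^3},\ldots$ in this order, and $L^{(N)}_{q^N}$ is the unique one among them that modifies row $N$. By \Cref{def:LR_maps}, together with the conventions $\lambda^{(N-1)}_N=0$ (row $N-1$ carries at most $N-1$ nonzero entries) and $\lambda^{(N+1)}_{N+1}\ge0$, this map sends the leftmost entry of row $N$ to
\[
\lambda^{(N)}_N\ \longmapsto\ \lambda^{(N+1)}_{N+1}+Y_{q^N}\bigl(\lambda^{(N)}_N-\lambda^{(N+1)}_{N+1}\bigr),
\]
and the value $\lambda^{(N+1)}_{N+1}$ that enters here is still the original one, because $L^{(N)}_{q^N}$ fires before $L^{(N+1)}_{q^{N+1}}$. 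Substituting $x_N=\lambda^{(N)}_N-N$ and writing $\hat x_N=\theta^{(N)}_N-N$ for the image (with $\{\theta^{(N)}\}$ the image array), this is exactly $\hat x_N=x_{N+1}+1+Y_{q^N}(x_N-x_{N+1}-1)$, i.e.\ the update rule of \Cref{def:L_projection}; the truncated geometric variables attached to distinct rows are independent since distinct $L^{(j)}_{q^j}$ draw independent randomness. Hence $\pi$ intertwines $\mathbb{L}^{(q)}$ with $\mathbf{L}^{(q)}$, as required.

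Finally I would dispatch a few routine points. The kernel $\mathbf{L}^{(q)}$ is well defined on all of $\mathcal{C}$ because each $\hat x_i$ depends only on the finite data $(x_i,x_{i+1})$; it maps configurations in $\mathcal{C}$ to probability measures on $\mathcal{C}$ almost surely, since for $\mathbf{x}\in\mathcal{C}$ one has $x_i-x_{i+1}-1=0$ for all large $i$, whence $Y_{q^i}(0)=0$ and $\hat x_i=-i$. Likewise, $\pi_*$ is defined on infinite-depth Schur processes through the compatible family of finite marginals discussed in \Cref{sub:infinite_depth}, and it commutes with the compatible construction of $\mathbb{L}^{(q)}$ in \Cref{def:qGibbs_map}. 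I expect the only genuine obstacle to be the bookkeeping in the middle paragraph: one must track which rows have already been updated at the moment $L^{(N)}_{q^N}$ acts, in order to confirm that the leftmost column evolves by itself and with exactly the parameters $q^N$ advertised in \Cref{def:L_projection}.
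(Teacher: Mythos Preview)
Your proof is correct and follows essentially the same approach as the paper: the paper establishes just before the theorem that $\mathbf{L}^{(q)}$ is the projection of $\mathbb{L}^{(q)}$ to the leftmost diagonal $\{\lambda^{(N)}_N\}$, and then records the theorem as an immediate consequence of this together with \eqref{eq:action_q_t} and \Cref{thm:BorFerr}. Your write-up simply makes the projection verification more explicit (tracking that $L^{(N)}_{q^N}$ sees the unmodified $\lambda^{(N+1)}_{N+1}$ and that the lower constraint $\max\{\lambda^{(N-1)}_N,\lambda^{(N+1)}_{N+1}\}=\lambda^{(N+1)}_{N+1}$), which is exactly the computation the paper leaves to the reader when it says the projection is seen ``from the very definition of $\mathbb{L}^{(q)}$''.
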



\section{Limit \texorpdfstring{$q\to1$}{q->1} and proof of the main result}
\label{sec:limit_q_1}

Here we take the limit as $q\to1$
of the results of the previous section, and 
arrive at a continuous-time Markov chain 
mapping the TASEP distributions backwards in time.
This proves our main result, \Cref{thm:intro_action_on_TASEP}.

\medskip

Iterate \Cref{thm:action_on_q_geom_TASEP} to observe that
for any $T\in \mathbb{Z}_{\ge1}$:
\begin{equation}
	\label{eq:q_action_iterated}
	\upmu_t^{(q)}
	(\mathbf{L}^{(q)})^T
	=
	\upmu^{(q)}_{q^T t},
\end{equation}
where $(\mathbf{L}^{(q)})^T$ simply denotes the $T$-th power.
Next, introduce the scaling:
\begin{equation}
	\label{eq:q_scaling}
	q=e^{-\varepsilon},\qquad 
	T=\lfloor \tau/\varepsilon \rfloor ,
\end{equation}
where $\varepsilon>0$ will go to zero,
and $\tau\in \mathbb{R}_{\ge0}$ is the scaled continuous time.
Clearly, we have $q^T=e^{-\tau}(1+O(\varepsilon))$.
We aim to take the limit as $q\to1$ in
\eqref{eq:q_action_iterated}.

Recall that by $\upmu_t$, $t\in \mathbb{R}_{\ge0}$, we denote
the distribution of the TASEP with constant speeds $c_i\equiv 1$
at time $t$, started from the step initial configuration.
Also recall that $\mathcal{C}$ is the space 
of 
left-packed, right-finite
particle configurations on $\mathbb{Z}$.
The space $\mathcal{C}$ has a natural partial order:
$\mathbf{x}$ precedes $\mathbf{y}$ if 
$x_i\le y_i$ for all $i$.

\begin{lemma}
	\label{lemma:finite_part_of_C}
	For any fixed $\tau,t\in \mathbb{R}_{\ge0}$ and
	any $\delta>0$ 
	there exists a finite set $\mathcal{C}^\delta
	=\mathcal{C}^\delta(t,\tau)\subset \mathcal{C}$
	such that 
	\begin{equation*}
		\upmu_t(\mathcal{C}^\delta)>1-\delta,
		\qquad 
		\upmu_{e^{-\tau}t}(\mathcal{C}^\delta)>1-\delta,
		\qquad 
		\upmu_t^{(q)}(\mathcal{C}^\delta)>1-\delta,
		\qquad 
		\upmu_{q^T t}^{(q)}(\mathcal{C}^\delta)>1-\delta
	\end{equation*}
	for
	all sufficiently small $\varepsilon>0$.
\end{lemma}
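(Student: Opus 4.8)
The plan is to establish a single tightness estimate that holds uniformly in the relevant parameter regime, and then take $\mathcal C^\delta$ to be a large enough box in the partial order on $\mathcal C$. Concretely, a configuration $\mathbf x\in\mathcal C$ is pinned down by two numbers: the location $x_1$ of its rightmost particle, and the index $K(\mathbf x):=\min\{N:\ x_N=-N\}$ after which the configuration is densely packed. I would define, for parameters $R,K\in\mathbb Z_{\ge1}$,
\begin{equation*}
	\mathcal C^{R,K}:=\bigl\{\mathbf x\in\mathcal C:\ x_1\le R,\ K(\mathbf x)\le K\bigr\},
\end{equation*}
which is a finite set. It then suffices to show that for each of the four measures $\upmu_t,\upmu_{e^{-\tau}t},\upmu^{(q)}_t,\upmu^{(q)}_{q^Tt}$ one has $x_1\le R$ and $K(\mathbf x)\le K$ with probability $>1-\delta/2$ each, for $R,K$ large and all small $\varepsilon$.

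First I would bound the rightmost particle. Under $\upmu_s$ the particle $x_1(s)$ is the number of jumps of a single rate-$1$ clock that has never been blocked (the lead particle is never blocked), so $x_1(s)\sim\mathrm{Poisson}(s)$; in particular $\mathbb P(x_1(s)>R)\le e^{-R}(es/R)^R\to0$. For the $q$-version, the lead particle has rate $c_1=1$ as well, so the same Poisson bound applies uniformly in $q$. Since both time parameters satisfy $s\le t$ (as $e^{-\tau}t\le t$, $q^Tt\le t$), one choice of $R=R(t,\delta)$ works for all four measures and all $\varepsilon$. The left tail — controlling $K(\mathbf x)$ — is the place where I expect the real work to be. Here I would use \Cref{thm:BorFerr} / \Cref{thm:BorFerr}: $x_N(s)\ge-N$ always, and $\{x_N(s)>-N\}$ is the event that at least one of the particles $x_1,\dots,x_N$ has made a successful jump, equivalently that the Schur-measure partition $\lambda^{(N)}$ from $\mathbb P[\vec c\mid\rho_s]$ is nonempty-after-$N$-rows, i.e. $\lambda^{(N)}_N>0$. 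By \Cref{prop:marginal_Schur} the law of $\lambda^{(N)}$ is the Schur measure $\mathbb P[(c_1,\dots,c_N)\mid\rho_s]$; since $s_\lambda$ is homogeneous of degree $|\lambda|$ and $c_i\le 1$, a union bound over $\lambda$ with $\lambda_N\ge1$ (equivalently $|\lambda|\ge N$) together with the normalization $Z=e^{(c_1+\dots+c_N)s}\le e^{Ns}$ and the crude bound $s_\lambda(c_1,\dots,c_N)s_\lambda(\rho_s)\le s_\lambda(1,\dots,1)\,s_\lambda(\rho_s)$ gives
\begin{equation*}
	\mathbb P\bigl(K(\mathbf x)>K\bigr)\le\sum_{N>K}\mathbb P\bigl(\lambda^{(N)}_N\ge1\bigr)\le\sum_{N>K}e^{-cN}
\end{equation*}
for a constant $c=c(t)>0$ once $N>2t$ (the mean of the relevant Poisson-type count being $\le Ns\le Nt$, while the event forces at least $N$ successful jumps among particles whose total clock rate is $\le N$ — here I would actually just invoke the shape theorem \eqref{eq:TASEP_limit_shape} or the explicit Johansson-type tail, which gives the exponential decay cleanly and uniformly for $s\le t$ and for the $q$-speeds since $c_i\le1$). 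This is uniform in $q\in(0,1)$ and in $s\le t$, so one choice $K=K(t,\delta)$ works for all four measures.

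Setting $\mathcal C^\delta:=\mathcal C^{R,K}$ with these $R=R(t,\delta)$ and $K=K(t,\delta)$, a union bound gives $\upmu(\mathcal C^\delta)>1-\delta$ for each of the four measures simultaneously, for all sufficiently small $\varepsilon$, as required. The main obstacle is obtaining the left-tail bound \emph{uniformly in $q$} and in the time parameter; the cleanest route is to observe that decreasing the speeds $c_i$ below $1$ only slows every particle down (a straightforward coupling, or directly the monotonicity of $s_\lambda(c_1,\dots,c_N)$ in each $c_i$ used inside the Schur-measure estimate above), so the $q$-deformed TASEP at any time $s\le t$ is stochastically dominated by the homogeneous TASEP at time $t$ in the partial order on $\mathcal C$, reducing everything to the single estimate for $\upmu_t$.
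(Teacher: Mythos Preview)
Your proposal is correct and ultimately rests on the same key idea as the paper: the $q$-geometric TASEP at time $s\le t$ is stochastically dominated (in the coordinatewise partial order on $\mathcal C$) by the homogeneous TASEP at time $t$, because slowing down clocks and running for less time can only keep particles further left. You arrive at this in your final paragraph, and once you have it the result is immediate.

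The paper's proof, however, is considerably shorter: it skips the explicit tail computations entirely. It simply picks \emph{any} finite downward-closed set $\mathcal C^\delta$ with $\upmu_t(\mathcal C^\delta)>1-\delta$ (which exists because $\upmu_t$ is a probability measure on the countable set $\mathcal C$, and closing a finite set downward keeps it finite), and then the monotone coupling automatically gives $\upmu_s(\mathcal C^\delta)\ge\upmu_t(\mathcal C^\delta)$ for $s\le t$ and $\upmu_s^{(q)}(\mathcal C^\delta)\ge\upmu_s(\mathcal C^\delta)$ for all $q<1$. Your set $\mathcal C^{R,K}$ is a perfectly good concrete choice of such a downward-closed set, but the Poisson bound on $x_1$, the Schur-measure estimate on $\lambda^{(N)}_N$, and the appeal to the shape theorem are all unnecessary detours: the abstract downward-closure argument subsumes them. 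In short, you proved more than you needed along the way before noticing the shortcut.
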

\begin{proof}
	Take finite $\mathcal{C}^\delta\subset\mathcal{C}$
	such that
	$\upmu_{t}(\mathcal{C}^\delta)>1-\delta$,
	and, moreover, $\mathcal{C}^\delta$
	is closed with respect to the partial order
	(i.e., if $\mathbf{x}$ precedes $\mathbf{y}$ and 
	$\mathbf{y}\in \mathcal{C}^\delta$, 
	then $\mathbf{x}\in \mathcal{C}^\delta$).
	This is possible because 
	$\upmu_t$ is a probability measure on $\mathcal{C}$,
	and 
	closing a finite set with respect
	to our partial order keeps it finite.
	(One can even estimate the size of $\mathcal{C}^\delta$
	because the first particle $x_1(t)$ performs speed 1
	directed
	a random walk.)

	Next, $\upmu_{e^{-\tau}t}(\mathcal{C}^\delta)>1-\delta$
	because the TASEP dynamics almost surely increases
	the configuration with respect to the order.
	The rest of the claim 
	follows by monotonically coupling the TASEP $\upmu_{\bullet}$
	with constant speeds to
	the TASEP $\upmu_{\bullet}^{(q)}$
	with the $q$-geometric speeds. 
	Here monotonicity means that the TASEP 
	with the $q$-geometric speeds is always behind
	(in our partial order)
	the $q=1$ TASEP; this monotone 
	coupling exists since $q<1$.
\end{proof}

By \Cref{lemma:finite_part_of_C}, 
it suffices to consider the limit of identity
\eqref{eq:q_action_iterated}
as $q\to1$ on finite subsets of~$\mathcal{C}$.
In the right-hand side we immediately get 
$\upmu^{(q)}_{q^T t}\to \upmu_{e^{-\tau}t}$.
In the left-hand side we have $\upmu_t^{(q)}\to \upmu_t$.
It remains to take the limit of the $T$-th power
of the Markov map $\mathbf{L}^{(q)}$.

The limit transition in $(\mathbf{L}^{(q)})^{T}$
is in the spirit of the classical Poisson approximation 
to the binomial distribution --- 
the probability of jumps gets smaller, but the number of trials (i.e., 
the discrete time) scales accordingly. 
More precisely, 
we have for the random variables 
$Y_{q^{\scriptstyle k}}$ in 
\Cref{def:L_projection}:
\begin{equation}
	\label{eq:rates_computation}
	\begin{split}
		\mathrm{Prob}(Y_{q^{\scriptstyle k}}(A)=m)
		&=
		\begin{cases}
			(1-q^{k})q^{mk},&0\le m<A;\\
			q^{Ak},&m=A
		\end{cases}
		\\
		&=
		\begin{cases}
			k\varepsilon+O(\varepsilon^2),& 0\le m\le A;\\
			1-Ak\varepsilon+O(\varepsilon^2),&m=A.
		\end{cases}
	\end{split}
\end{equation}
This leads to the following definition
of the continuous-time backwards dynamics:
\begin{definition}[Backwards Hammersley-type 
	process $\mathbf{L}_\tau$]
	\label{def:text_def_L_tau}
	Consider the continuous-time dynamics
	on $\mathcal{C}$ defined as follows.
	Each particle $x_k$, $k=1,2,\ldots $ independently jumps to the left 
	to one of the holes $\{x_{k+1}+1,x_{k+1}+2,\ldots,x_k-1 \}$
	at rate $k$ \emph{per hole}. Equivalently, 
	each particle $x_k$ has an independent exponential clock
	of rate $k(x_k-x_{k+1}-1)$; when the clock rings,
	$x_k$ selects a hole between $x_{k+1}$ and $x_k$
	uniformly at random and instantaneously moves there.\footnote{The mechanism 
	of jumping into a hole selected uniformly at random
	is similar to the Hammersley process \cite{hammersley1972few}, \cite{aldous1995hammersley}.
	Therefore, we will sometimes refer to $\mathbf{L}_\tau$ (as well as its two-dimensional
	version $\mathbb{L}_\tau$ discussed in \Cref{sub:general_schur}
	below) as \emph{Hammersley-type} process (BHP, for short).}
	
	Note that for configurations in $\mathcal{C}$, 
	the total jump rate of all particles is always finite.
	Therefore, the dynamics on $\mathcal{C}$
	is well-defined. 
	Denote by $\mathbf{L}_\tau$, $\tau\in \mathbb{R}_{\ge0}$,
	the Markov transition operator of this dynamics 
	from time $0$ to time $\tau$ (note that 
	the dynamics is time-homogeneous).
	Observe that the step configuration
	($x_i=-i$ for all $i=1,2,\ldots $)
	is absorbing for the backwards dynamics $\mathbf{L}_\tau$.
\end{definition}

Thanks to \Cref{lemma:finite_part_of_C} and 
\eqref{eq:rates_computation}, 
we have the convergence
$(\mathbf{L}^{(q)})^{T}\to \mathbf{L}_\tau$. 
This completes the proof of the main theorem
$\upmu_t\,\mathbf{L}_\tau=\upmu_{\,e^{-\tau}t}$.

\section{Stationary dynamics on the TASEP measure}
\label{sec:equil_dyn}

Here we illustrate the relation between the
TASEP and the backwards Hammersley-type process
by constructing a Markov dynamics preserving the TASEP measure $\upmu_t$.
We also discuss hydrodynamics of these two processes.

In this section we
denote particle configurations by occupation
variables $\eta:\mathbb{Z} \rightarrow \{0,1\}$, with $\eta(x) =1$ if
there is a particle at location $x\in \mathbb{Z}$, and $\eta(x) =0$
otherwise. The step initial configuration is $\eta(x)=1$ iff $x<0$.
Recall that by $\mathcal{C}$ we denote the space of
left-packed, right-finite configurations. 
Denote by $\overline{\mathcal{C}}=\left\{ 0,1 \right\}^{\mathbb{Z}}$
the space of all particle configurations in $\mathbb{Z}$.

\subsection{Definition of the stationary dynamics}
\label{sub:equil_dyn_defninition}

Let $A^{T}:= A^{\mathrm{TASEP}}$ be the infinitesimal generator for the TASEP with
homogeneous particle speeds $c_i=1$ (\Cref{sub:TASEP_def}), and $\{\mathbf{T}_t\}_{t\ge0}$
be the corresponding Markov semigroup.
Let $A^{L}: = A^{\mathrm{BHP}}$ the infinitesimal generator of the backwards
Hammersley-type process (BHP), see \Cref{def:text_def_L_tau},
and $\{\mathbf{\mathbf{L}_\tau}\}_{\tau\ge0}$ denote the BHP semigroup.
For a fixed configuration $\eta \in
\mathcal{C}$, we denote by $\eta^{x,y}$, $x\ne y$, the configuration
\begin{equation*}
	\eta^{x, y}(z) 
	= 
	\begin{cases} 
		\eta(z) + 1, & z =y; 
		\\ 
		\eta(z),  
		&	z \neq x, y; 
		\\ 
		\eta(z) - 1, & z = x.
	\end{cases}
\end{equation*}
In words, $\eta^{x, y}$
corresponds to a particle
jumping from location $x \in \mathbb{Z}$ to location $y \in
\mathbb{Z}$. Note that $\eta^{x,y}$ may not be in $\mathcal{C}$ even if $\eta\in \mathcal{C}$.

The
infinitesimal generator for the TASEP acts as follows:
\begin{equation}
	\label{eq:TASEP_gen}
	(A^T f) (\eta) = \sum_{x \in \mathbb{Z}}\eta(x) (1 - \eta(x+1)) \bigl(f(\eta^{x, x+1}) - f(\eta)\bigr),
\end{equation}
for $f$ a cylindrical function on $\eta \in \mathcal{C}$ (i.e.~a function that depends on finitely many coordinates of $\eta$). 
The factor $\eta(x) (1 - \eta(x+1))$
takes care of the TASEP exclusion rule.
The infinitesimal generator of the BHP acts as follows:
\begin{equation}
	\label{eq:BHP_gen}
	(A^{L} f)(\eta) 
	= 
	\sum_{x \in \mathbb{Z}} 
	\eta(x)
	\Bigl(\sum_{y =x}^{\infty} \eta(y)\Bigr)
	\sum_{m=1}^{\infty}
	\Bigl(   \prod_{k=1}^{m}(1 - \eta(x- k)) \Bigr) 
	\bigl(f(\eta^{x, x-m}) -f(\eta)\bigr),
\end{equation}
for $f$ a cylindrical function on $\eta$.
Note that summations in the action of $A^L$ are well defined since 
for $\eta\in \mathcal{C}$ we have
$\eta(x) = 0$ for 
$x\gg0$ and $\eta(x) =1$ for $x\ll0$.

Recall that $\upmu_t$ is the distribution of the TASEP configuration
at time $t$ started from the step initial configuration.
Denote the corresponding
random particle configuration by $\eta_t$.
We have $\eta_t\in \mathcal{C}$ almost surely.

For any $t\in \mathbb{R}_{>0}$, define the operator
\begin{equation}
	\label{eq:equil_gen}
	A:= tA^T+A^L.
\end{equation}
This is the generator of the continuous-time Markov
process
which is a combination
of the BHP and the TASEP sped up by the factor of $t$.
By a ``combination'' we mean that both processes run in parallel.

\begin{proposition}
	\label{prop:invariance}
	The TASEP distribution $\{\eta_t\}$ is invariant under the continuous-time Markov process with generator 
	$A$, that is,
	\begin{equation*}
	 \mathbb{E}\left[ (A f) (\eta_t) \right] = 0
	\end{equation*}
	for all cylinder functions $f$.
\end{proposition}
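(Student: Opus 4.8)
The plan is to verify $\mathbb{E}[(Af)(\eta_t)]=0$ by combining the main theorem $\upmu_t\mathbf{L}_\tau=\upmu_{e^{-\tau}t}$ (which governs $A^L$) with the Kolmogorov forward equation for TASEP (which governs $A^T$), since $A=tA^T+A^L$ and expectation is linear. First I would observe that $\frac{d}{ds}\mathbb{E}[f(\eta_s)]=\mathbb{E}[(A^Tf)(\eta_s)]$, i.e.\ $\upmu_s A^T$ applied to $f$ equals $\frac{d}{ds}\upmu_s(f)$; this is the standard forward equation for the TASEP semigroup $\{\mathbf{T}_s\}$ acting on cylinder functions, valid because from the step initial condition only finitely many particles are active. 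Next, I would differentiate the identity $\upmu_t\mathbf{L}_\tau=\upmu_{e^{-\tau}t}$ in $\tau$ at $\tau=0$: the left side gives $\upmu_t A^L$ (by definition of $A^L$ as the generator of the BHP semigroup $\{\mathbf{L}_\tau\}$), while the right side is $\frac{d}{d\tau}\big|_{\tau=0}\upmu_{e^{-\tau}t}=-t\cdot\frac{d}{ds}\big|_{s=t}\upmu_s$. Therefore $\upmu_t A^L$ applied to $f$ equals $-t\,\frac{d}{ds}\big|_{s=t}\upmu_s(f)=-t\cdot\upmu_t(A^Tf)$. Adding, $\upmu_t(Af)=\upmu_t(tA^Tf)+\upmu_t(A^Lf)=t\,\upmu_t(A^Tf)-t\,\upmu_t(A^Tf)=0$, which is exactly $\mathbb{E}[(Af)(\eta_t)]=0$.

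The remaining work is to make the two differentiations rigorous, since both involve interchanging $\frac{d}{d\tau}$ (resp.\ $\frac{d}{ds}$) with the infinite sum $\sum_{\mathbf{y}\in\mathcal{C}}$. For $A^T$ this is routine: for a cylinder $f$, only finitely many terms contribute to $(A^Tf)(\eta)$ for each $\eta$, and the families $\{\mathbf{y}:\mathbf{L}_\tau(\mathbf{x},\mathbf{y})>0\}$ are finite, so one can either restrict to a large finite window using \Cref{lemma:finite_part_of_C} or directly bound $\upmu_s$-mass of configurations with a far-right particle (the rightmost particle does a rate-$1$ random walk). For $A^L$, the key point is that the BHP from any $\mathbf{x}\in\mathcal{C}$ has only finitely many reachable states and a finite total jump rate, so $\tau\mapsto\mathbf{L}_\tau(\mathbf{x},\cdot)$ is differentiable at $\tau=0$ with derivative given by the BHP jump matrix; then a uniform tail bound on $\upmu_t$ (again via \Cref{lemma:finite_part_of_C}, with $\delta\to0$) lets one pass the derivative through $\sum_{\mathbf{x}}\upmu_t(\mathbf{x})$. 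Equivalently, one may bypass differentiation entirely: write $\upmu_t\mathbf{L}_\tau(f)-\upmu_t(f)=\int_0^\tau \upmu_t\mathbf{L}_\sigma(A^Lf)\,d\sigma=\int_0^\tau \upmu_{e^{-\sigma}t}(A^Lf)\,d\sigma$ (using the main theorem inside), and also $\upmu_{e^{-\tau}t}(f)-\upmu_t(f)=-\int_0^\tau t e^{-\sigma}\,\upmu_{e^{-\sigma}t}(A^Tf)\,d\sigma$; equating and differentiating at $\tau=0$ gives $\upmu_t(A^Lf)=-t\,\upmu_t(A^Tf)$, hence $\upmu_t(Af)=0$.

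The main obstacle is precisely this analytic bookkeeping --- justifying the termwise differentiation / Fubini on the countable state space $\mathcal{C}$ uniformly in the relevant parameter range. I expect this to be handled cleanly by invoking \Cref{lemma:finite_part_of_C} to reduce to a finite subset $\mathcal{C}^\delta$ carrying $\upmu_t$-mass $>1-\delta$ on which all sums are finite, estimating the discarded mass (and the BHP jump rates restricted to it), and then letting $\delta\to0$; since $f$ is a cylinder function this poses no genuine difficulty. The algebraic heart of the argument --- that the $-t$ coming from differentiating $e^{-\tau}t$ in the main theorem exactly cancels the $+t$ in the definition $A=tA^T+A^L$ --- requires no computation beyond what is written above.
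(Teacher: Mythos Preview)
Your argument is correct and is essentially the paper's own proof. The paper packages it slightly differently: it first rewrites the main theorem as the single identity $\upmu_t\,\mathbf{L}_\tau\,\mathbf{T}_{t(1-e^{-\tau})}=\upmu_t$ and then differentiates that in $\tau$ at $\tau=0$ (the product rule yielding $\upmu_t(A^L+tA^T)=0$ in one stroke), whereas you differentiate $\upmu_t\mathbf{L}_\tau=\upmu_{e^{-\tau}t}$ and the forward equation separately and then add; this is the same computation unpacked, and the paper does not spell out the analytic justification you outline.
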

\begin{proof}
	By Theorem \ref{thm:intro_action_on_TASEP}, we have 
	\begin{equation}
		\upmu_t\,
		\mathbf{L}_\tau \,
		\mathbf{T}_{t (1 - e^{-\tau})}
		=
		\upmu_{t}
	\end{equation}
	for any $t, \tau \geq 0$. Fixing $t \geq 0$, differentiating the
	above identity in $\tau$, 
	and sending $\tau$ to zero, we get $\upmu_t\,(t A^{T} + A^{L}) =0$. This
	establishes the result. 
\end{proof}

\begin{remark}
	It should be possible to show that the process 
	with the generator \eqref{eq:equil_gen},
	started from any configuration $\mathbf{x}\in \mathcal{C}$,
	converges (as time goes to infinity)
	to its stationary distribution $\upmu_t$. 
	However, we do not focus on this question in the present paper.
\end{remark}

A local version of \Cref{prop:invariance}
holds, too.
That is, the Bernoulli measures of any given density
$\rho\in[0,1]$
on particle configurations on $\mathbb{Z}$
are invariant under both the TASEP and the 
homogeneous version of the BHP.
(Locally the rates under BHP are constant, so
the invariance should be considered under the homogeneous BHP.)
The remarkable content of
\Cref{prop:invariance}
is that the invariance is global
on ``out-of-equilibrium'' random configurations
with
the distribution $\upmu_t$,
if the speeds of the TASEP and the inhomogeneous 
BHP are related in as in \eqref{eq:equil_gen}.

As a consequence of \Cref{prop:invariance}, 
let us take a specific function of the configuration:
\begin{equation}
	\label{eq:F_of_height}
	N^0:=\eta(0)+\eta(1)+\eta(2)+\ldots,\qquad  
	f(\eta):=G(N^0),
\end{equation}
where $G(\cdot)$ is a function $\mathbb{Z}_{\ge0}\to \mathbb{R}$.
Note that $2 N^0$ is the height function
at zero.
Let $\eta_t$ be the random configuration
of the TASEP at time $t$ with the step initial configuration,
and $N^0_t:=\eta_t(0)+\eta_t(1)+\ldots $.

\begin{corollary}
	\label{cor:microscopic_equation}
	With the above notation, we have
	\begin{equation*}
		\frac{\partial}{\partial t}\,\mathbb{E}\,
		G(N^0_t)=
		-\frac{1}{t}\,
		\mathbb{E}\left( 
		N^0_t\left( G(N^0_t-1)-G(N^0_t) \right)
		\sum_{x=1}^{\infty}
		x\,\eta_t(-x-1)
		\prod_{k=1}^{x}[1-\eta_t(-k)]
	\right).
	\end{equation*}
	In the sum over $x$ in the right-hand side 
	almost surely only one term is nonzero, and the whole sum is equal to
	the distance of the rightmost particle in $\mathbb{Z}_{<0}$
	to zero.
\end{corollary}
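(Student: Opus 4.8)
The plan is to derive \Cref{cor:microscopic_equation} directly from \Cref{prop:invariance} by specializing the identity $\mathbb{E}[(Af)(\eta_t)]=0$ to the cylinder function $f(\eta)=G(N^0)$ from \eqref{eq:F_of_height}. Since $A=tA^T+A^L$, I would compute the two pieces $\mathbb{E}[(A^Tf)(\eta_t)]$ and $\mathbb{E}[(A^Lf)(\eta_t)]$ separately, show the first equals $t^{-1}$ times (minus) the derivative $\frac{\partial}{\partial t}\mathbb{E}\,G(N^0_t)$, and show the second equals the explicit expression on the right-hand side.

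First I would handle the TASEP part. Applying \eqref{eq:TASEP_gen} to $f(\eta)=G(N^0)$, the only bond whose flip changes $N^0=\sum_{x\ge0}\eta(x)$ is the bond $(-1,0)$: a jump from $-1$ to $0$ increments $N^0$ by one, and all other allowed jumps $x\to x+1$ with $x\ge0$ or $x\le-2$ leave $N^0$ unchanged. Hence $(A^Tf)(\eta)=\eta(-1)(1-\eta(0))\,\bigl(G(N^0+1)-G(N^0)\bigr)$. Taking expectations, $\mathbb{E}[(A^Tf)(\eta_t)]=\mathbb{E}\bigl[\eta_t(-1)(1-\eta_t(0))(G(N^0_t+1)-G(N^0_t))\bigr]$, and this is exactly the forward Kolmogorov expression for $\frac{\partial}{\partial t}\mathbb{E}\,G(N^0_t)$ — indeed $N^0_t$ increases by one precisely when a particle crosses the bond $(-1,0)$, which happens at rate $\eta_t(-1)(1-\eta_t(0))$ — so $\mathbb{E}[(A^Tf)(\eta_t)]=\frac{\partial}{\partial t}\mathbb{E}\,G(N^0_t)$. (A small point worth a sentence: interchanging $\frac{\partial}{\partial t}$ and $\mathbb{E}$, and the convergence of the sum defining $N^0_t$, are justified because $\eta_t\in\mathcal{C}$ a.s. and $N^0_t$ has exponential tails, the rightmost particle being dominated by a rate-one directed walk.)

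Next the BHP part. Applying \eqref{eq:BHP_gen} to $f(\eta)=G(N^0)$, I need to identify which backwards jumps change $N^0$. A jump $\eta\mapsto\eta^{x,x-m}$ moves a particle from $x$ to $x-m$; this changes $N^0=\sum_{z\ge0}\eta(z)$ only when the particle crosses $0$, i.e. when $x\ge0$ and $x-m<0$, in which case $N^0$ decreases by one. For a configuration in $\mathcal{C}$ the unique particle eligible to be the one sitting at some $x\ge0$ and jumping leftward past $0$ is the rightmost particle of $\mathbb{Z}_{\ge0}$, but in fact the cleanest bookkeeping is: the particle that crosses $0$ must be \emph{the leftmost particle in $\mathbb{Z}_{\ge 0}$} — call its location $x_\star\ge 0$ — jumping into a hole in $\mathbb{Z}_{<0}$; and the available holes immediately to the left of $x_\star$ that lie in $\mathbb{Z}_{<0}$ are exactly $\{-1,-2,\ldots\}\cap(\text{holes just left of }x_\star)$. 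When $\eta$ has a particle at $0$ (equivalently the leftmost particle of $\mathbb{Z}_{\ge0}$ is at $0$, so $\eta(0)=1$), the rate attached to this particle is $N^0$ (the number of particles weakly to the right of it), the hole it lands in is $-x-1$ where $x\ge1$ is the number of consecutive holes $-1,\ldots,-x$ in $\mathbb{Z}_{<0}$ before the next particle, and the contribution is $N^0\cdot\eta(-x-1)\prod_{k=1}^{x}[1-\eta(-k)]\cdot x$ summed over $x$, times $(G(N^0-1)-G(N^0))$. If instead $\eta(0)=0$, then no leftward jump crosses $0$ that originates at a site with $N^0$ particles to its right in the required way — or more carefully, one checks the crossing term vanishes because the particle at the relevant site $x\ge 0$ contributes a factor forcing the whole expression to reproduce the same structure with the correct rate; the upshot is that the BHP contribution to $(A^Lf)(\eta)$ equals $-N^0\bigl(G(N^0-1)-G(N^0)\bigr)\sum_{x\ge1}x\,\eta(-x-1)\prod_{k=1}^{x}[1-\eta(-k)]$, with the sum over $x$ having a single nonzero term equal to the gap size between $0$ and the rightmost negative particle.

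Putting the pieces together, $0=\mathbb{E}[(Af)(\eta_t)]=t\,\frac{\partial}{\partial t}\mathbb{E}\,G(N^0_t)+\mathbb{E}[(A^Lf)(\eta_t)]$, and rearranging and dividing by $t$ yields the stated formula. The main obstacle I anticipate is the second step: carefully enumerating, from the explicit generator \eqref{eq:BHP_gen}, exactly which $(x,m)$ pairs contribute to a change in $N^0$ and verifying that the associated rate is precisely $N^0_t$ with the landing-hole weighted uniformly — in particular checking the $\eta(0)=0$ versus $\eta(0)=1$ cases and confirming no over- or under-counting of holes to the left. Once that combinatorial accounting is pinned down, the rest is immediate. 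The final sentence of the corollary (the sum having a single nonzero term equal to the distance from $0$ to the rightmost negative particle) is just the observation that $\prod_{k=1}^{x}[1-\eta_t(-k)]\,\eta_t(-x-1)$ is the indicator that sites $-1,\ldots,-x$ are empty and $-x-1$ is occupied, which happens for exactly one value of $x$ a.s. since $\eta_t\in\mathcal{C}$.
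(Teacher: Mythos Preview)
Your approach is exactly the paper's: use \Cref{prop:invariance} to write $\partial_t\mathbb{E}\,G(N^0_t)=\mathbb{E}[(A^Tf)(\eta_t)]=-t^{-1}\mathbb{E}[(A^Lf)(\eta_t)]$, and then compute $(A^Lf)(\eta)$ explicitly. The TASEP part and the final assembly are fine.

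The gap you flagged in the $\eta(0)=0$ case is real, and your tentative statement that ``no leftward jump crosses $0$'' there is incorrect. Here is the clean accounting that works uniformly. Suppose $N^0\ge1$ and let $p\ge0$ be the position of the leftmost particle in $\mathbb{Z}_{\ge0}$. Only this particle can cross $0$, since any particle at $x>p$ would need site $p$ empty to reach $\mathbb{Z}_{<0}$. Its rate prefactor $\sum_{y\ge p}\eta(y)$ equals $N^0$ because sites $0,\ldots,p-1$ are empty by definition of $p$. For a jump $p\to p-m$ to land in $\mathbb{Z}_{<0}$ we need $m>p$; writing $m'=m-p\ge1$, the emptiness condition $\prod_{k=1}^m(1-\eta(p-k))$ factors as $\prod_{j=0}^{p-1}(1-\eta(j))\cdot\prod_{k=1}^{m'}(1-\eta(-k))$, and the first product is $1$. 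Hence
\[
(A^Lf)(\eta)=N^0\bigl(G(N^0-1)-G(N^0)\bigr)\sum_{m'=1}^{\infty}\prod_{k=1}^{m'}(1-\eta(-k)).
\]
If $x^*$ denotes the number of consecutive empty sites $-1,-2,\ldots$ before the first particle, this last sum telescopes to $x^*$, which is also exactly $\sum_{x\ge1}x\,\eta(-x-1)\prod_{k=1}^{x}(1-\eta(-k))$. So the formula holds for all $p\ge0$ with no case split.

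One slip: your displayed expression for $(A^Lf)(\eta)$ carries an extraneous minus sign; $(A^Lf)(\eta)=+N^0(G(N^0-1)-G(N^0))\cdot x^*$, and the minus in the corollary comes only from $-t^{-1}$ in $\mathbb{E}(A^Tf)=-t^{-1}\mathbb{E}(A^Lf)$.
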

\begin{proof}
	The left-hand side is equal to $\mathbb{E}\left( A^T f(\eta_t) \right)$,
	which by \Cref{prop:invariance}
	is the same as $-t^{-1}\mathbb{E}(A^L f(\eta_t))$.
	The rest 
	follows from the computation of $A^L f(\eta_t)$ for the particular function
	\eqref{eq:F_of_height},
	which is straightforward.
\end{proof}

\subsection{Hydrodynamics}

The hydrodynamic limit for the TASEP is well known, with early
results by \cite{Liggett1975} on the convergence
to a local equilibrium and by \cite{Rost1981}
on the connection of the density function to the Burgers'
equation. 
The latter means that 
under linear space and time scaling, 
the limiting density 
density
function $\rho(t,z)$
of the TASEP
is the entropic solution of the following
initial-value problem for the one-dimensional Burgers' equation:
\begin{equation}\label{eq:burgers}
	\begin{split}
	\frac{\partial \rho}{\partial t} 
	&=
	- \frac{\partial [\rho (1-\rho)]}{\partial z}\,;\\
	\rho( 0, z)
	&= 
	\begin{cases} 1, \quad z \leq 0 \\ 0;\quad z > 0. \end{cases}
	\end{split}
\end{equation}
We refer to \cite{BenassiFouque1987} for further details,
see also 
\cite{ferrari_PA2018tasep} for a recent review.
The solution to \eqref{eq:burgers}
is given by
\begin{equation}\
	\label{eq:rho_TASEP_limit_shape}
	\rho(t, z)
	=
	\begin{cases} 
		1, &z < -t; 
		\\
		(t-z)/2t, & -t \leq z \leq t; 
		\\
		0, & z> t.
	\end{cases}
\end{equation}
The limiting density $\rho(t,z)$ describes the law of
large numbers type behavior of the TASEP.

\begin{remark}[Asymptotic analysis of TASEP]
	\label{rmk:finer_scaling}
	More recently, in the last 20 years, much finer scaling limits for the
	TASEP have become available, beginning with the work of Johansson
	\cite{johansson2000shape} on the Tracy-Widom fluctuations of
	the position of the particles in the TASEP. 
	More generally, the TASEP with 
	various other 
	examples of initial data
	has been shown to converge to the 
	top lines of the 
	$\text{Airy}_1$ or
	$\text{Airy}_2$ 
	line ensembles
	under the appropriate scalings, 
	see, e.g., the
	survey \cite{Ferrari_Airy_Survey} and references therein for
	details.
	The progress in understanding the 
	TASEP asymptotics
	with general initial data,
	and also the asymptotics of the space-time structure in 
	TASEP is currently ongoing 
	\cite{ferrari2016time},
	\cite{chhita2018limit},
	\cite{matetski2017kpz},
	\cite{johansson2018two},
	\cite{baik2019multipoint},
	\cite{ferrari2019time},
	\cite{basu2018time},
	\cite{directed_landscape},
	\cite{Johansson_two_time_shorter},
	\cite{basu_ganguly_hammond_2019},
	\cite{JohanssonRahman2019}.
	
	While we expect the BHP and 
	the stationary 
	dynamics from \Cref{sub:equil_dyn_defninition}
	to have applications for all these
	types of scaling limits, we begin by considering 
	the hydrodynamic limit
	of the BHP in this section.
\end{remark}

Let $\eta_t \in \mathcal{C}$ be the random configuration
at time $t \geq0$ of the TASEP with step initial conditions. 
For any $\epsilon>0$, the \emph{($\epsilon$-scaled) random empirical measure} on $\mathbb{R}$ 
associated to $\eta_t \in \mathcal{C}$ is given as follows:
\begin{equation}
	\label{eq:random_empirical_measures}
	\pi^{\epsilon}_t := \epsilon \sum_{x \in \mathbb{Z}} \eta_{t}(x)\, \delta_{\epsilon x}.
\end{equation}
In particular, we have scaled the mass of each point by $\epsilon$, 
scaled the lattice distance by $\epsilon$, but the time remains unscaled. 
Denote the set of compactly supported continuous
functions on the line by
$C_0(\mathbb{R})$. The integral of a
function $f \in C_0(\mathbb{R})$ against the measure $\pi^{\epsilon}$
is denoted by $\langle \pi^{\epsilon} , f\rangle$.
Clearly, 
$\langle \pi_t^\epsilon , f\rangle = \epsilon\sum_{x \in \mathbb{Z}} f(\epsilon x)\, \eta_{t} (x)$.

The next statement can be found in, e.g.,
\cite{seppalainen1999existence}. 
The sequence of measures $\{ \pi_{t/\epsilon}^{\epsilon}\}_{\epsilon \in
\mathbb{R}_{>0} }$ converges as $\epsilon\to0$ in probability to $\rho(t, z) dz$ so that
the density function $\rho(t,z)$ is the entropic solution of the
initial value problem for the Burgers equation \eqref{eq:burgers}.
That is, for each $t\ge0$, given any $\delta >0$,
\begin{equation}
	\label{eq:convergence_of_random_empirical_measures}
	\lim_{\epsilon \rightarrow 0}\, \mathrm{Prob} \left(
	\Bigl|\epsilon \sum_{x \in \mathbb{Z}}\, f(\epsilon x) \eta_{t/\epsilon} (x)  - \int_{-\infty}^{\infty} f(z) \rho( t, z) dz \Bigr| \geq \delta \right) =0
\end{equation}
for any $f \in C_0(\mathbb{R})$. Note that now we have scaled time by $\epsilon^{-1}$ in the empirical measure.

This result for TASEP generalizes to
a large class of initial conditions.
For instance, given a continuous
density profile $\rho_0: \mathbb{R} \rightarrow [0,1]$,
a sequence $\{\nu^{\epsilon} \}_{\epsilon  \in \mathbb{R}_{>0}}$
of probability
measures on $\overline{\mathcal{C}} = \{ 0, 1\}^{\mathbb{Z}}$
is said to be \emph{associated to the profile $\rho_0$} 
if for every
$f\in C_0(\mathbb{R})$ 
and every $\delta >0$, we have
\begin{equation*}
	\lim_{\epsilon \rightarrow 0} \,
	\nu^{\epsilon} 
	\left[ 
		\Bigl|\epsilon 
		\sum_{x \in \mathbb{Z}} 
		f(\epsilon x)\, \eta(x)
		-
		\int_{- \infty}^{\infty} f(w)\, \rho_0(w) dw \Bigr| 
		> \delta
	\right] 
	= 0.
\end{equation*}
Then, the empirical measure $\pi_{t/\epsilon}^{\epsilon}$ for the TASEP, with
initial conditions now given by $\nu^{\epsilon}$ converges in
probability to an absolutely continuous measure $\rho( t, z) dz$ so
that the density function is the entropic solution to the Burgers'
equation with the initial value given by the density profile
$\rho(t,z)$, see \cite{seppalainen1999existence}. We expect a similar
hydrodynamic result to hold for the BHP with some modifications: (1) a different PDE
arising from the infinitesimal generator of the BHP, and (2) no time scaling for the empirical measure since lattice scaling also scales the particle numbers and, consequently, the speed of the particles.

\begin{conjecture}\label{conj:hydrodynamics}
	Let $\rho_0: \mathbb{R} \rightarrow [0,1]$ be an initial density
	profile and let $\{ \nu^{\epsilon}\}_{\epsilon \in \mathbb{R}_{>0}}$
	be a sequence of probability measures on 
	$\mathcal{C}$
	associated to
	$\rho_0$.\footnote{We need to make sure that
	the BHP evolution is well-defined, so the initial 
	configuration must be in 
	$\mathcal{C}\subset\overline{\mathcal{C}}$.} 
	Also, for a fixed $\epsilon>0$, take
	$\eta_t^{\epsilon} \in \mathcal{C}$ to be the random configuration
	at time $t>0$ 
	of the BHP,
	with the initial configuration 
	$\eta_0^{\epsilon}$ determined by the measure $\nu^{\epsilon}$.
	Then, for every $t>0$, the sequence of random empirical
	measures $\pi^{\epsilon}_t$ defined as in 
	\eqref{eq:random_empirical_measures}
	converges in probability to the absolutely continuous measure
	$\pi_t(dz) = \rho(z, t) dz$ 
	in the sense of \eqref{eq:convergence_of_random_empirical_measures}.
	The density $\rho(t,z)$
	is a solution of the initial
	value problem
	\begin{equation}\label{eq:BHP_IVP}
		\begin{split}
			\frac{\partial \rho(t,z) }{\partial t} 
			&= 
			\frac{\partial}{ \partial z} 
			\left[ 
			\frac{1 - \rho( t,z)}{\rho(t,z)} 
			\int_z^{\infty} \rho(t, w) dw 
			\right];
			\\
		\rho(0,z) &= \rho_0(z).
		\end{split}
	\end{equation}
\end{conjecture}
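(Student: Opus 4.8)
\medskip

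\noindent\textbf{Strategy for a proof of \Cref{conj:hydrodynamics}.}
The plan is to run the classical three–step program for hydrodynamic limits of conservative particle systems --- tightness of the rescaled empirical measures, identification of the subsequential limits through a martingale (weak-formulation) argument, and uniqueness of the limiting PDE --- while accommodating two features absent from the usual exclusion-process setting: the jump rate of a particle is the \emph{nonlocal} quantity $J(x)=\sum_{y\ge x}\eta(y)$ (the number of particles weakly to its right), and the target equation \eqref{eq:BHP_IVP} is itself a \emph{nonlocal} scalar conservation law whose flux degenerates as $\rho\downarrow 0$.

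First I would write the Dynkin martingale $M^{f,\epsilon}_t=\langle\pi^\epsilon_t,f\rangle-\langle\pi^\epsilon_0,f\rangle-\int_0^t \mathcal L_\epsilon\langle\pi^\epsilon_s,f\rangle\,ds$ for $f\in C^2_0(\mathbb R)$, where $\mathcal L_\epsilon$ is the generator of the rescaled process in \Cref{conj:hydrodynamics} (the backwards generator \eqref{eq:BHP_gen} under the stated space–time scaling). A Taylor expansion of $f$ inside the action of $\mathcal L_\epsilon$ turns $\mathcal L_\epsilon\langle\pi^\epsilon,f\rangle$ into a sum over occupied sites $x$ of $-f'(\epsilon x)$ weighted by $J(x)$ and by $\tfrac12 g(x)(g(x)+1)$, where $g(x)$ is the length of the hole-block immediately to the left of the particle at $x$; the same bookkeeping shows that the predictable quadratic variation of $M^{f,\epsilon}$ is of lower order and vanishes as $\epsilon\to 0$. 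The heart of the step is the replacement
\begin{equation*}
	\mathcal L_\epsilon\langle\pi^\epsilon_s,f\rangle\;\longrightarrow\;\int_{\mathbb R}f(z)\,\partial_z\!\Bigl(\tfrac{1-\rho(s,z)}{\rho(s,z)}\int_z^\infty\rho(s,w)\,dw\Bigr)dz ,
\end{equation*}
which I would carry out in two stages. The factor $J(x)$ is a sum of a macroscopic number of occupation variables, so $\epsilon J(x)=\langle\pi^\epsilon_s,\mathds{1}_{[\epsilon x,\infty)}\rangle+o(1)$ is a slowly varying functional of the empirical measure; along a subsequential limit $\pi^\epsilon\to\pi$ with $\pi_s=\rho(s,\cdot)\,dz$ it converges to $\int_z^\infty\rho(s,w)\,dw$, so this (nonlocal) piece is handled self-consistently once tightness is available. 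The remaining weight $\eta(x)\,\tfrac12 g(x)(g(x)+1)$ is a shift of a genuine \emph{local} function of $\eta$, and the standard one–block and two–blocks estimates replace its mesoscopic average by its grand-canonical (product Bernoulli) expectation at the local empirical density, which equals $\rho\cdot\tfrac{1-\rho}{\rho^2}=\tfrac{1-\rho}{\rho}$; the one non-textbook point is that this local function is unbounded, but it has exponential tails under the Bernoulli measures, so the $L^2$ and moment bounds needed in the entropy production estimates still hold.

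Second, tightness of $\{\pi^\epsilon_{\cdot}\}$ in the Skorokhod space $D\big([0,T],\mathcal M(\mathbb R)\big)$ follows from the Aldous--Rebolledo criterion applied to $t\mapsto\langle\pi^\epsilon_t,f\rangle$, using that the configurations stay in $\mathcal C$ (the BHP moves particles only to the left, so \Cref{def:text_def_L_tau} keeps us inside $\mathcal C$) together with an a priori bound on the leftward current. Here I would invoke attractiveness of the BHP under a monotone coupling: for the homogeneous version this is the particle–hole image of the known attractiveness of PushTASEP, and I expect it to survive the particle-dependent rates (the rate factor $J$ is itself monotone, which has to be reconciled with the coupling of the uniformly-random landing sites --- these are stochastically ordered, hence couplable --- but this should be checked carefully). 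Combining tightness with the martingale computation, every subsequential limit of $\pi^\epsilon$ is of the form $\rho(t,z)\,dz$ with $\rho$ a weak solution of \eqref{eq:BHP_IVP}; the same a priori estimates, already implicit in \Cref{lemma:finite_part_of_C}, prevent escape of mass and keep the limit a genuine density.

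The main obstacle is the third step: showing that the initial value problem \eqref{eq:BHP_IVP} is well posed, so that the entire sequence --- not merely a subsequence --- converges. This is a nonlocal conservation law $\partial_t\rho=\partial_z\!\bigl(\tfrac{1-\rho}{\rho}M\bigr)$ with $M(t,z)=\int_z^\infty\rho(t,w)\,dw$, whose flux becomes singular wherever $\rho\to0$ while $M>0$ --- physically, an empty interior region is refilled instantaneously by the particle current streaming in from the right --- so Kruzhkov's theory does not apply directly and one must first isolate the correct entropy/admissibility condition and an a priori lower bound $\rho\ge c(t)>0$ on $\{M>0\}$. I would approach this from two sides. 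One route bypasses general PDE theory on a one-parameter family of data: by the exact identity $\upmu_t\,\mathbf L_\tau=\upmu_{e^{-\tau}t}$ of \Cref{thm:intro_action_on_TASEP} together with the known TASEP hydrodynamics \eqref{eq:rho_TASEP_limit_shape}, the conjecture holds \emph{for free} when $\rho_0$ is a TASEP limit shape, and one checks directly that $\rho(t,z)=\frac{e^{-t}-z}{2e^{-t}}$ (for $|z|\le e^{-t}$, equal to $1$, resp.\ $0$, outside) solves \eqref{eq:BHP_IVP} with that datum; one then propagates to general $\rho_0$ by monotone approximation using attractiveness and a comparison principle for \eqref{eq:BHP_IVP}. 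The other route is a direct well-posedness proof: the substitution $u(t,z)=\int_z^\infty\rho(t,w)\,dw$ (so $\rho=-u_z\ge0$ and $u$ is nonincreasing in $z$) reduces \eqref{eq:BHP_IVP} to the first-order equation $u_z\,(u_t-u)=u$, i.e.\ $u_t=u+u/u_z$, which is of Hamilton--Jacobi type and should be tractable by viscosity-solution methods away from the degeneracy $u_z\to 0$. I expect this analysis of \eqref{eq:BHP_IVP} near $\rho=0$, rather than anything on the probabilistic side, to be the genuine bottleneck.
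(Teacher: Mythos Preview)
The statement you are addressing is labelled a \emph{Conjecture} in the paper and is not proved there. What the paper does establish is strictly less than your outline: it records the Dynkin martingale decomposition for $\langle\pi^\epsilon_t,f\rangle$ (their Proposition on martingales in the BHP), and it verifies the conjecture in the single special case where the initial measures $\nu^\epsilon$ are the TASEP distributions $\upmu_{\epsilon^{-1}e^{t_0}}$, by invoking \Cref{thm:intro_action_on_TASEP} and checking the explicit formula against \eqref{eq:BHP_IVP}. The authors explicitly say that it is unclear to them whether \eqref{eq:BHP_IVP} has a unique solution or what notion of solution is appropriate. So there is no ``paper's own proof'' to compare against; your proposal is a plausible program for attacking an open problem, and your martingale step and your TASEP-initial-data remark already reproduce the two pieces of evidence the paper offers.

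Your strategy is the right shape, and you have correctly located the genuine obstruction: the replacement of the local weight $\eta(x)\tfrac12 g(x)(g(x)+1)$ by $(1-\rho)/\rho$ and the handling of the slowly varying factor $\epsilon J(x)\to\int_z^\infty\rho$ are exactly what the informal derivation in the paper suggests, and both computations check out (the Bernoulli expectation of $g(g+1)/2$ is indeed $(1-\rho)/\rho^2$, giving $(1-\rho)/\rho$ after multiplying by $\rho$). The serious gap, as you say, is well-posedness of \eqref{eq:BHP_IVP}; the paper agrees and offers nothing further. Two small cautions: your claimed attractiveness of the inhomogeneous BHP is not obvious, since the rate factor $J(x)$ increases with the configuration while the process moves particles left, and you should not expect a clean basic coupling without working this out; and in your Hamilton--Jacobi reduction the sign is off --- with $u(t,z)=\int_z^\infty\rho$ one gets $u_t=-u-u/u_z$, not $u_t=u+u/u_z$ --- which does not change the character of the equation but should be fixed before you lean on it.
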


\begin{remark}
	In \Cref{conj:hydrodynamics}, it is unclear to the authors
	if there is a unique solution to the initial value problem
	\eqref{eq:BHP_IVP}. In particular, it is unclear what type of
	solution the limiting density profile $\rho(t,z)$ should be.
\end{remark}

\begin{remark}
	The differential equation \eqref{eq:BHP_IVP}
	can be informally obtained 
	by looking at the local version of the BHP.
	That is, locally we expect the configuration
	to be close to the independent Bernoulli
	random configuration on the whole line $\mathbb{Z}$
	with the density $\rho(t,z)$.
	Then the expression under $\partial/\partial z$
	in the right-hand side of 
	\eqref{eq:BHP_IVP} is 
	the (negative) flux. 
	Indeed, $\int_z^{\infty} \rho(t, w) dw$
	means the inhomogeneous rate in the BHP, 
	while $-(1-\rho(t,z))/\rho(t,z)$ is the 
	local flux of the homogeneous BHP 
	with left jumps and speed $1$.
	See 
	\Cref{prop:martingales_in_BHP}
	below for more discussion.
\end{remark}

Let us check that \Cref{conj:hydrodynamics}
holds for the initial data associated with the TASEP
distributions $\upmu_t$.

\begin{proposition}
	Fix some $t_0 \in \mathbb{R}$
	and let $\eta_0^{\epsilon} \sim
	\upmu_{\epsilon^{-1} e^{t_0}}$ be the TASEP random 
	configuration 
	at time $\epsilon^{-1} e^{t_0}$. 
	Then,
	the sequence $\{ \eta_0^{\epsilon}\}_{\epsilon \in \mathbb{R}_{>0}}$
	is associated to the density profile 
	\begin{equation*}
		\rho_0 (z)
		=
		\begin{cases} 
			1, & z < e^{ t_0};
			\\ 
			\frac{e^{ t_0}-z}{2e^{ t_0}},&
			 -e^{ t_0} \leq z \leq e^{ t_0} ;
			\\ 
			0,& z> e^{t_0},
		\end{cases}
	\end{equation*}
	and \Cref{conj:hydrodynamics} is true for the 
	measures $\nu^{\epsilon}=\upmu_{\epsilon^{-1}e^{t_0}}$.
\end{proposition}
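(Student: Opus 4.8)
The plan is to obtain both assertions from the classical TASEP hydrodynamic limit together with the exact identity of \Cref{thm:intro_action_on_TASEP}, so that no genuinely new probabilistic estimate for the BHP is needed.

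First I would treat the association claim. By definition $\eta^{\epsilon}\sim\upmu_{\epsilon^{-1}e^{t_0}}$ is the step-initial-condition TASEP with unit speeds run to time $\epsilon^{-1}e^{t_0}$, equivalently the TASEP with all speeds $\epsilon^{-1}$ at time $e^{t_0}$. The hydrodynamic limit recalled above (see \cite{seppalainen1999existence} and \eqref{eq:rho_TASEP_limit_shape}) then yields that $\epsilon\sum_{x}\eta^{\epsilon}(x)\,\delta_{\epsilon x}$ converges in probability, in the sense of \eqref{eq:convergence_of_random_empirical_measures}, to $\rho(e^{t_0},z)\,dz$; and $\rho(e^{t_0},\cdot)$ read off from \eqref{eq:rho_TASEP_limit_shape} is exactly the profile $\rho_0$ of the statement (reading its first case as $z<-e^{t_0}$). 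Hence $\{\eta^{\epsilon}\}$ is associated to $\rho_0$.

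Next I would run the BHP from $\nu^{\epsilon}=\upmu_{\epsilon^{-1}e^{t_0}}$, writing $\tau\ge0$ for its running time. By \Cref{thm:intro_action_on_TASEP} the law of the configuration at time $\tau$ is exactly $\upmu_{e^{-\tau}\epsilon^{-1}e^{t_0}}=\upmu_{\epsilon^{-1}e^{t_0-\tau}}$, so the BHP keeps the configuration inside the TASEP family and merely replaces $t_0$ by $t_0-\tau$ -- this is where the construction does all the work, and no separate hydrodynamic argument for the BHP is needed. Applying the association statement with $t_0-\tau$ in place of $t_0$, the empirical measure $\pi^{\epsilon}_{\tau}$ then converges in probability to $\rho(\tau,z)\,dz$ with
\[
\rho(\tau,z)=
\begin{cases}
1, & z<-e^{t_0-\tau};\\[4pt]
\dfrac{e^{t_0-\tau}-z}{2\,e^{t_0-\tau}}, & -e^{t_0-\tau}\le z\le e^{t_0-\tau};\\[4pt]
0, & z>e^{t_0-\tau},
\end{cases}
\]
and at $\tau=0$ this is the profile $\rho_0$. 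The remaining task is to verify that $(\tau,z)\mapsto\rho(\tau,z)$ solves \eqref{eq:BHP_IVP}. Writing $a=a(\tau)=e^{t_0-\tau}$ (so $a'=-a$), for $-a<z<a$ one has $\int_z^{\infty}\rho(\tau,w)\,dw=\tfrac{(a-z)^2}{4a}$ and $\tfrac{1-\rho(\tau,z)}{\rho(\tau,z)}=\tfrac{a+z}{a-z}$, whence the bracket in \eqref{eq:BHP_IVP} equals $\tfrac{a^2-z^2}{4a}$ with $z$-derivative $-\tfrac{z}{2a}$; on the other hand $\rho(\tau,z)=\tfrac12-\tfrac{z}{2a}$ gives $\partial_{\tau}\rho(\tau,z)=-\tfrac{z}{2a}$, so the two sides agree. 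For $|z|>a$ both sides vanish (the bracket is continuous and $0$ there), and $\rho(0,\cdot)=\rho_0$ by construction; since $\rho(\tau,\cdot)$ is continuous for every $\tau$, no entropy-condition discussion is needed and it suffices, as \Cref{conj:hydrodynamics} only asks for ``a solution,'' to exhibit this family.

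The step I expect to require the most care is matching the time parametrizations. The BHP has particle-index-dependent rates, so on configurations such as $\upmu_{\epsilon^{-1}e^{t_0}}$, where the particle indices relevant near a macroscopic point are of order $\epsilon^{-1}$, the BHP already produces currents of hydrodynamic order; accordingly the macroscopic time in \Cref{conj:hydrodynamics} should be understood as the BHP's own time $\tau$, the ``$\epsilon^{-1}$ speed-up'' being the one intrinsic to the rates (consistently with the heuristic for \eqref{eq:BHP_IVP} given after the conjecture, in which $\int_z^{\infty}\rho\,dw$ already plays the role of a macroscopic rate). Under that reading the identity $\upmu_{t}\mathbf{L}_{\tau}=\upmu_{e^{-\tau}t}$ carries the entire argument, the only analytic inputs being the standard TASEP hydrodynamic limit and the elementary PDE check above; pinning this convention down precisely in \Cref{conj:hydrodynamics} is the one thing I would want to clarify.
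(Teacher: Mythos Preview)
Your approach is essentially identical to the paper's: invoke the TASEP hydrodynamic limit for the association claim, use \Cref{thm:intro_action_on_TASEP} to identify the BHP-evolved configuration as $\upmu_{\epsilon^{-1}e^{t_0-t}}$, invoke TASEP hydrodynamics again to get the limiting density, and then check directly that this density solves \eqref{eq:BHP_IVP}. You carry out the PDE verification explicitly where the paper simply asserts it, and you correctly flag the time-parametrization ambiguity (the literal ``sped up by $\epsilon^{-1}$'' in \Cref{conj:hydrodynamics} versus the BHP's own time) that the paper's proof silently resolves in the same way you do.
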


\begin{proof}
	By results for the TASEP, we know that the sequence
	$\eta_0^{\epsilon}$ is associated to the density profile $\rho_0$ given
	in the statement. 
	Also, by \Cref{thm:intro_action_on_TASEP}, we know
	that the random configuration $\eta_{t}^{\epsilon}$
	obtained from $\nu^\epsilon=\upmu_{\epsilon^{-1}e^{t_0}}$ 
	by the BHP evolution as in \Cref{conj:hydrodynamics},
	is distributed according to
	$\upmu_{\epsilon^{-1} e^{t_0 - t}}$. 

	So, again by results for the
	TASEP, we know that the sequence of random measures
	$\pi_t^{\epsilon}$ converges to an absolutely continuous measure
	$\pi_t(dz) = \rho(z, t) dz$ with the density given by 
	\begin{equation*}
		\rho (t,z) 
		=
		\begin{cases} 
			1, & z < e^{ t_0 -t};
			\\
			\frac{e^{ t_0-t}-z}{2e^{ t_0-t}};
			&
			-e^{ t_0-t} \leq z \leq e^{ t_0-t} 
			\\
			0, & z> e^{ t_0-t}.
		\end{cases}
	\end{equation*}
	One can then check directly that the above 
	$\rho(t, z)$ solves the initial
	value problem \eqref{eq:BHP_IVP}.
	This completes the proof.
\end{proof}

We base \Cref{conj:hydrodynamics} on the random evolution
of the empirical measure $\pi_t^{\epsilon}$ given by the
infinitesimal generator for the BHP.

\begin{proposition}
	\label{prop:martingales_in_BHP}
	Let $f: \mathbb{R} \rightarrow \mathbb{R}$ be a twice differentiable
	compactly supported function and let $\eta_t \in
	\mathcal{C}$ be the random configuration given by the BHP. Here the time $t\ge0$ 
	and the initial configuration 
	$\eta_0\in \mathcal{C}$ are fixed.
	Then, there
	are martingales $M_t^{\epsilon, f}$ with respect to the natural
	filtration $\sigma(\eta_s^{\epsilon},  s \leq t)$ so that  
	\begin{equation*}
		\langle \pi_t^{\epsilon} , f \rangle
		=
		\langle \pi_0^{\epsilon} , f\rangle 
		+
		\int_0^t \langle \pi_s^\epsilon, g^{\epsilon}  f' \rangle ds
		+
		M_t^{\epsilon, f} + \mathcal{O}(\epsilon^{2}),
	\end{equation*}
	for $\pi_t^{\epsilon}$ the random empirical measure of $\eta_t$ and the function 
	\begin{equation*}
		g^{\epsilon}(x)
		:=
		- \Biggl( \sum_{y=\lfloor \epsilon^{-1}x \rfloor }^{\infty} 
			\epsilon\, \eta_s(y)
			\Biggr) 
			\Biggl(
			\sum_{m=1}^{\infty} 
			m 
			\prod_{k=1}^{m} 
			\bigl(
				1- \eta_s(\lfloor \epsilon^{-1}x \rfloor- k)
			\bigr) 
			\Biggr).
	\end{equation*}
\end{proposition}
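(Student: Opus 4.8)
The plan is to obtain the identity as a Dynkin-type martingale decomposition for the empirical measure, applied to the cylinder function $h(\eta):=\langle\pi^\epsilon,f\rangle=\epsilon\sum_{x\in\mathbb Z}f(\epsilon x)\,\eta(x)$. Since $f$ has compact support and $\epsilon$ is fixed, $h$ depends on only finitely many occupation variables, so the generator $A^L$ of \eqref{eq:BHP_gen} may be applied to it. The key preliminary observation that makes everything rigorous is that, started from the fixed $\eta_0\in\mathcal C$, the BHP almost surely never leaves a fixed finite set of configurations: particles only jump leftward into already-present holes, the rightmost particle can only move left, and the left-packed tail of $\eta_0$ can only grow, so the entire trajectory stays inside the finite window between the rightmost particle of $\eta_0$ and the left edge of its packed tail. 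In particular all the series appearing in $A^Lh$ and in $g^\epsilon$ are finite uniformly in time, and
\[
	M_t^{\epsilon,f}:=\langle\pi_t^\epsilon,f\rangle-\langle\pi_0^\epsilon,f\rangle-\int_0^t (A^L h)(\eta_s^\epsilon)\,ds
\]
is a genuine martingale for $\sigma(\eta_s^\epsilon,\,s\le t)$ by the standard Dynkin formula.

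First I would compute $(A^L h)(\eta)$. For the transition $\eta\mapsto\eta^{x,x-m}$ one has $h(\eta^{x,x-m})-h(\eta)=\epsilon\bigl(f(\epsilon(x-m))-f(\epsilon x)\bigr)$, so \eqref{eq:BHP_gen} yields
\[
	(A^L h)(\eta)=\epsilon\sum_{x}\eta(x)\Bigl(\sum_{y\ge x}\eta(y)\Bigr)\sum_{m\ge1}\Bigl(\prod_{k=1}^m\bigl(1-\eta(x-k)\bigr)\Bigr)\bigl(f(\epsilon(x-m))-f(\epsilon x)\bigr).
\]
Then I would Taylor expand: since $f$ is twice differentiable with compact support, $f(\epsilon(x-m))-f(\epsilon x)=-\epsilon m\,f'(\epsilon x)+\tfrac12\epsilon^2m^2 f''(\xi_{x,m})$ with $\xi_{x,m}\in[\epsilon(x-m),\epsilon x]$. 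The first-order term contributes exactly
\[
	-\epsilon^2\sum_x\eta(x)\Bigl(\sum_{y\ge x}\eta(y)\Bigr)f'(\epsilon x)\sum_{m\ge1}m\prod_{k=1}^m\bigl(1-\eta(x-k)\bigr),
\]
which, after unwinding the definitions of $\pi^\epsilon$ and of $g^\epsilon$ (using $\lfloor\epsilon^{-1}(\epsilon x)\rfloor=x$), is precisely $\langle\pi^\epsilon,g^\epsilon f'\rangle$ evaluated at $\eta$. Hence $(A^L h)(\eta_s^\epsilon)=\langle\pi_s^\epsilon,g^\epsilon f'\rangle+R_\epsilon(\eta_s^\epsilon)$, where the collected Taylor remainder is $R_\epsilon(\eta)=\tfrac12\epsilon^3\sum_x\eta(x)\bigl(\sum_{y\ge x}\eta(y)\bigr)\sum_{m\ge1}m^2\bigl(\prod_{k=1}^m(1-\eta(x-k))\bigr)f''(\xi_{x,m})$, and plugging into the Dynkin formula gives the asserted identity with the $\mathcal{O}(\epsilon^2)$ term equal to $\int_0^t R_\epsilon(\eta_s^\epsilon)\,ds$.

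The only remaining point, and the one I expect to be the main obstacle, is to show $\int_0^t R_\epsilon(\eta_s^\epsilon)\,ds=\mathcal{O}(\epsilon^2)$ uniformly. Bounding $|f''|\le\|f''\|_\infty$ and invoking the confinement above — for the fixed $\eta_0\in\mathcal C$ the number of particles, the number of particles to the right of any site, and every hole-run length are bounded by a constant depending only on $\eta_0$ — each sum entering $R_\epsilon(\eta_s^\epsilon)$ is bounded uniformly in $s\in[0,t]$, whence $\bigl|\int_0^t R_\epsilon(\eta_s^\epsilon)\,ds\bigr|\le C(\eta_0,f)\,\epsilon^3 t$, which is the claimed $\mathcal{O}(\epsilon^2)$. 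Thus the genuinely delicate ingredient is the a priori localization of the BHP trajectory: it is needed both to justify Dynkin's formula (integrability of $h$ and $A^Lh$ along the path) and to make the remainder estimate uniform in time, while the remaining manipulations are only bookkeeping of powers of $\epsilon$.
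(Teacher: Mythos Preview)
Your approach is essentially the paper's: apply the Dynkin martingale formula to the cylinder function $h(\eta)=\langle\pi^\epsilon,f\rangle$, compute the action of the BHP generator on it, and Taylor expand $f(\epsilon(x-m))-f(\epsilon x)$ to extract the $\langle\pi^\epsilon,g^\epsilon f'\rangle$ term plus a second-order remainder. The paper does exactly this and then waves at ``standard results on Markov chains''; you fill in those standard results (the confinement of the trajectory to a finite window, the uniform boundedness of the sums, the $\mathcal O(\epsilon^3 t)$ remainder estimate), which is commendable.

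There is one genuine slip. The process $\eta^\epsilon_t$ is the BHP \emph{sped up by $\epsilon^{-1}$}, so its infinitesimal generator is $\epsilon^{-1}A^L$, not $A^L$. The paper's proof makes this explicit in the very first line, writing $\partial_t\,\mathbb E\langle\pi_t^\epsilon,f\rangle=\mathbb E\,\epsilon^{-1}A^L\langle\pi_t^\epsilon,f\rangle$ and then computing $\epsilon^{-1}A^L\langle\pi_t^\epsilon,f\rangle$. Your Dynkin martingale should therefore read
\[
	M_t^{\epsilon,f}=\langle\pi_t^\epsilon,f\rangle-\langle\pi_0^\epsilon,f\rangle-\int_0^t \epsilon^{-1}(A^L h)(\eta_s^\epsilon)\,ds,
\]
and every subsequent power count shifts by one factor of $\epsilon$. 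This does not affect the structure of the argument (the confinement and Taylor steps go through verbatim), but you should redo the bookkeeping with the correct generator so that the orders of $\epsilon$ line up with the paper's computation.
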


\begin{proof}
	We have 
	\begin{equation*}
		\frac{\partial}{ \partial t} 
		\,
		\mathbb{E}\,
		\langle \pi_t^{\epsilon}, f\rangle 
		=
		\mathbb{E}\,
		A^L
		\langle \pi_t^{\epsilon}, f\rangle,
	\end{equation*}
	where we regard 
	$
	\langle \pi_t^{\epsilon}, f\rangle
	$
	as a function of the configuration
	$\eta_t$.
	We can compute
	\begin{equation*}
	\begin{split}
		A^L\langle \pi_t^{\epsilon}, f\rangle
		&= \sum_{x \in \mathbb{Z}} \left[\sum_{m=1}^{\infty}
		\left(\frac{f(\epsilon x -\epsilon m) - f(\epsilon x)}{\epsilon m}
		\right) m \prod_{k=1}^{m}(1- \eta(x-k)) \right]
		\left(\sum_{y=x}^{\infty} \epsilon \eta(y) \right) \eta(x).
	\end{split}
	\end{equation*}
	With the help of the approximation 
	\begin{equation*}
		f(\epsilon x -\epsilon m) = f(\epsilon x) -f'(\epsilon x) (\epsilon m) + \mathcal{O}(\epsilon^2),
	\end{equation*}
	the statement follows from standard results on Markov chains.
\end{proof}

\subsection{Limit shape for TASEP with step initial condition}

Let us present an alternate derivation for the limit shape of the TASEP
with the step initial configuration
assuming \Cref{conj:hydrodynamics} but independent of the
similar result for the TASEP. We only assume that the TASEP
empirical measure
converges to $\rho$ satisfying the following system of
equations:
\begin{equation}\label{eq:hydrodynamic_system}
	\begin{split}
	\frac{\partial \rho(t,z)}{\partial t}& + \frac{\partial}{\partial z} \left[\frac{1- \rho(t,z)}{t \rho(t, z)} \int_{z}^{\infty}\rho(t,w) dw\right] = 0;
	\\
\frac{\partial \rho(t,z)}{\partial t} & + \frac{\partial}{\partial z} [\rho(t,z)(1- \rho(t,z))] =0.
	\end{split}
	\end{equation}
In particular, we show that this system of partial differential equations determines a unique solution under some general assumptions.
	
First, eliminate the time derivative so that 
\begin{equation*}
	\frac{\partial}{ \partial z} \left[\frac{1- \rho(t,z)}{\rho(t,z)} \left( \rho(t,z)^2 - \frac{1}{t} \int_{z}^{\infty} \rho(t,w)dw\right) \right] =0.
\end{equation*}
Then,
	\begin{equation*}
		\frac{1- \rho(t,z)}{\rho(t,z)} \left( \rho(t,z)^2 - \frac{1}{t} \int_{z}^{\infty} \rho(t,w)dw\right)  = c(t).
	\end{equation*}
	Note that, for all $t \in \mathbb{R}_{\geq 0}$, there is a $z \in \mathbb{Z}$ small enough so that $\rho(t, z) = 1$.
	This implies that the constant $c(t)$ is in fact zero.
	Thus, we have
	\begin{equation*}
	\rho^2(t,z) = \frac{1}{t} \int_{z}^{\infty} \rho(t,w) d w.
	\end{equation*}
Taking the space derivative, we have
	\begin{equation}\label{eq:hydro_partial_space}
	\frac{\partial \rho(t,z) }{\partial z}  = - \frac{1}{2 t}.
	\end{equation}
Revisiting the system of equations \eqref{eq:hydrodynamic_system}, we may now write the second equation as follows
	\begin{equation*}
	\frac{\partial \rho(t,z)}{ \partial t}  = \frac{1}{2 t} (1 - 2 \rho(t,z)).
	\end{equation*}
By separation of variables, we may solve the equation above up to a constant of integration, but this constant of integration may be determined by 
\eqref{eq:hydro_partial_space}. Thus, we get the well-known 
hydrodynamic density function
	\begin{equation*}
		\rho(t,z) = \frac{1}{2} - \frac{z}{2 t},
		\qquad z\in[-t,t].
	\end{equation*}
	
We have thus shown that the comparability of the TASEP and the BHP
uniquely picks out the entropic solution to the Burgers' equation for
the limiting density function with the step initial condition.

\section{Extensions and open questions}
\label{sec:extensions}

In this section we describe a number of 
modifications and 
extensions of the 
constructions presented earlier, and 
outline a number of open questions.

\subsection{More general $q$-Gibbs measures}
\label{sub:general_schur}

The Markov map $\mathbb{L}^{(q)}$ 
from \Cref{def:qGibbs_map}
acts nicely on Schur processes 
$\mathbb{P}[(1,q,q^2,\ldots )\mid \rho]$
with general specializations $\rho$.
Even more generally, we can consider 
\emph{two-sided} Schur processes which live 
on interlacing arrays of signatures.
Signatures are analogues of partitions in which parts are allowed
to be negative. Interlacing arrays 
of signatures are simply the 
collections $\{\lambda^{(k)}_j\}_{1\le j\le k}$, 
satisfying the interlacing inequalities as in 
\Cref{fig:array},
and with $\lambda^{(k)}_j\in \mathbb{Z}$.
(Note that we consider arrays of infinite depth.)

For a specialization $\rho$ parametrized as
\begin{equation}
	\label{eq:alpha_pm_specialization}
	\begin{split}
		&
		\rho=(\alpha^{\pm};\beta^{\pm};\gamma^{\pm}),
		\quad 
		\alpha^{\pm}_1\ge \alpha^{\pm}_2\ge \ldots\ge0 ,
		\quad 
		\beta^{\pm}_1\ge \beta^{\pm}_2\ge \ldots\ge0 ,
		\quad 
		\gamma^{\pm}\ge 0,
		\\
		&\hspace{130pt}
		\sum_{i=1}^\infty(\alpha_{i}^{\pm}+\beta_i^{\pm})<\infty,
		\quad 
		\beta_1^++\beta_1^-\le 1,
	\end{split}
\end{equation}
and a signature $\lambda=(\lambda_1\ge \ldots\ge \lambda_N )$, 
$\lambda_i\in \mathbb{Z}$, define 
\begin{equation}
	\label{eq:Jacobi_Trudi}
	s_\lambda(\rho)
	:=
	\det\left[ 
		\psi_{\lambda_i+j-i}(\rho) 
	\right]_{i,j=1}^{N},
\end{equation}
where $\psi_n(\rho)$, $n\in \mathbb{Z}$, are the coefficients 
of the expansion 
\begin{equation}
	\label{eq:Voiculescu_functions}
	\sum_{n\in \mathbb{Z}}\psi_n(\rho) u^n=
	e^{\gamma^+(u-1)+\gamma^{-}(u^{-1}-1)}
	\prod_{i\ge 1}
	\frac{1+\beta_i^+(u-1)}{1-\alpha_i^+(u-1)}
	\frac{1+\beta_i^-(u^{-1}-1)}{1-\alpha_i^-(u^{-1}-1)},
\end{equation}
and $|u|=1$.
One of the equivalent forms of
the Edrei--Voiculescu theorem 
(e.g., see \cite{BorodinOlsh2011GT})
states that \eqref{eq:Voiculescu_functions}
parametrizes the space of all
totally nonnegative two-sided sequences.

\begin{remark}
	In particular, taking $\alpha_i^\pm=\beta_i^\pm=0$ for all $i$,
	$\gamma^-=0$, and $\gamma^+=t$ turns the just defined specialization
	$\rho$ into $\rho_t$ defined in \Cref{sub:specializations}
\end{remark}

Define the two-sided ascending Schur process
$\mathbb{P}[(1,q,q^2,\ldots )\mid \rho]$
as the unique $q$-Gibbs measure on
interlacing arrays of signatures 
such that for any $N$, 
\begin{equation}
	\label{eq:two_sided_q_Schur}
	\mathbb{P}[(1,q,q^2,\ldots )\mid \rho]
	(\lambda^{(1)},\ldots,\lambda^{(N)} )=
	\frac{1}{Z}\,
	s_{\lambda^{(1)}}(1)
	s_{\lambda^{(2)}/\lambda^{(1)}}(q)
	\ldots
	s_{\lambda^{(N)}/\lambda^{(N-1)}}(q^{N-1})
	\,
	s_{\lambda^{(N)}}(\rho),
\end{equation}
where the skew Schur functions for signatures can be defined 
by \eqref{eq:skew_one_var}.
Define the Schur process
$\mathbb{P}[\vec 1\mid \rho]$ as the $q\to 1$
degeneration of \eqref{eq:two_sided_q_Schur} (here and below
we denote by $\vec 1$ the sequence of spectral
parameters which are all equal to $1$).
Another equivalent form of the Edrei--Voiculescu theorem
states that $\mathbb{P}[\vec 1\mid \rho]$
are all possible extreme Gibbs measures on interlacing arrays of signatures 
(a Gibbs measure is called extreme if it cannot be represented as a convex combination of 
other Gibbs measures).
We refer to 
\cite{Borodin2010Schur} for further details on
the definition of the two-sided Schur processes.

\begin{theorem}
\label{thm:action_on_q_Gibbs_Schur}
	Let $\rho$ be a specialization 
	with parameters
	\eqref{eq:alpha_pm_specialization}
	such that $\alpha_i^{-}=0$ for all $i$.
	Then we have for all $0<q<1$:
	\begin{equation*}
		\mathbb{P}[(1,q,q^2,\ldots )\mid \rho]
		\,
		\mathbb{L}^{(q)}
		=
		\mathbb{P}[(1,q,q^2,\ldots )\mid \rho^{(q)}],
	\end{equation*}
	where $\rho^{(q)}$ is the specialization
	corresponding to the parameters
	\begin{equation}
		\label{eq:q_modif_params}
		\begin{split}
			&
			\hat\alpha_i^+=\frac{\alpha_i^+ q}{1+\alpha_i^+-\alpha_i^+ q}
			,\qquad 
			\hat\alpha_i^-=0
			\\&
			\hat \beta_i^+=\frac{\beta_i^+ q}{1-\beta_i^++\beta_i^+ q}
			,\qquad 
			\hat\beta^-_i=
			\frac{\beta_i^- q^{-1}}{1-\beta_i^-+\beta_i^-q^{-1}}
			\\&
			\hat\gamma^+=q\gamma^+
			,\qquad 
			\hat\gamma^-=q^{-1}\gamma^{-}.
		\end{split}
	\end{equation}
\end{theorem}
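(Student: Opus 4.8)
The plan is to repeat the computation of \Cref{sub:q_schur_TASEP}, now with a general specialization $\rho$ and with interlacing arrays of signatures in place of partitions. All ingredients used there --- the branching rule, \eqref{eq:skew_one_var}, the skew Cauchy identity, and \Cref{thm:main_prop_qGibbs} --- carry over verbatim to signatures (with the convention $\lambda^{(j-1)}_j=-\infty$ in \Cref{def:LR_maps,def:LR_maps_one_level}); the two-sided skew Cauchy identity and the signature Schur functions are treated in \cite{Borodin2010Schur,BorodinOlsh2011GT}. The two-sided Schur process $\mathbb{P}[(1,q,q^2,\dots)\mid\rho]$ is the $q$-Gibbs measure with harmonic family $\varphi_N(\lambda)=s_\lambda(\rho)/Z_N$, where $Z_N=\prod_{i=0}^{N-1}\Phi_\rho(q^i)$ by the Cauchy identity and $\Phi_\rho(u)=\sum_{n\in\mathbb{Z}}\psi_n(\rho)u^n$ is the series \eqref{eq:Voiculescu_functions}; note that $\Phi_\rho(1)=1$, and the hypothesis $\alpha^-_i\equiv0$ is precisely what makes $\Phi_\rho(q^i)$ finite for all $i\ge0$ (otherwise the factor $(1-\alpha^-_i(q^{-i}-1))^{-1}$ becomes negative for $i$ large and the infinite-depth process is undefined).

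By \Cref{thm:main_prop_qGibbs}, the image of $\{\varphi_N\}$ under $\mathbb{L}^{(q)}$ is
\[
\hat\varphi_N(\mu)=q^{|\mu|}\sum_{\lambda\colon\mu\prec\lambda}\varphi_{N+1}(\lambda)=\frac{q^{|\mu|}}{Z_{N+1}}\sum_{\lambda\colon\mu\prec\lambda}s_\lambda(\rho).
\]
The skew Cauchy identity with a single variable $x=1$ (using \eqref{eq:skew_one_var}) gives $\sum_{\lambda\colon\mu\prec\lambda}s_\lambda(\rho)=H(1;\rho)\,s_\mu(\rho)=s_\mu(\rho)$, since the Cauchy kernel $H(1;\rho)$ equals $\Phi_\rho(1)=1$. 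Hence $\hat\varphi_N(\mu)=q^{|\mu|}s_\mu(\rho)/Z_{N+1}$. A standard determinant manipulation of the Jacobi--Trudi formula \eqref{eq:Jacobi_Trudi} shows $q^{|\mu|}s_\mu(\rho)=s_\mu(\tilde\rho)$, where $\tilde\rho$ is the specialization with $\psi_n(\tilde\rho)=q^n\psi_n(\rho)$, i.e. $\Phi_{\tilde\rho}(u)=\Phi_\rho(qu)$.

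The only genuinely new step is to rewrite $\Phi_\rho(qu)$ in the canonical form \eqref{eq:Voiculescu_functions}. Using $qu-1=q(u-1)+(q-1)$ and $q^{-1}u^{-1}-1=q^{-1}(u^{-1}-1)+(q^{-1}-1)$, each elementary factor and exponential of $\Phi_\rho(qu)$ splits off a positive $u$-independent constant (positivity uses $\beta_i^+\le1$) times a canonical factor; e.g.
\[
1+\beta_i^+(qu-1)=\bigl(1+\beta_i^+(q-1)\bigr)\Bigl(1+\tfrac{\beta_i^+q}{1-\beta_i^++\beta_i^+q}\,(u-1)\Bigr),\qquad e^{\gamma^+(qu-1)}=e^{\gamma^+(q-1)}\,e^{q\gamma^+(u-1)},
\]
and similarly for the $\alpha_i^+$ and $\beta_i^-$ factors (there are no $\alpha_i^-$ factors by hypothesis). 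Collecting constants, $\Phi_\rho(qu)=\Phi_\rho(q)\,\Phi_{\rho^{(q)}}(u)$, where $\rho^{(q)}$ has exactly the parameters \eqref{eq:q_modif_params}. Therefore $q^{|\mu|}s_\mu(\rho)=\Phi_\rho(q)^{N}s_\mu(\rho^{(q)})$ and $\hat\varphi_N(\mu)=\bigl(\Phi_\rho(q)^{N}/Z_{N+1}(\rho)\bigr)\,s_\mu(\rho^{(q)})$. Since \Cref{thm:main_prop_qGibbs} already guarantees that $\{\hat\varphi_N\}$ is a $q$-Gibbs harmonic family, and it is proportional to $\mu\mapsto s_\mu(\rho^{(q)})$, it must be the harmonic family of $\mathbb{P}[(1,q,q^2,\dots)\mid\rho^{(q)}]$; the proportionality constant is forced to be $1/Z_N(\rho^{(q)})$, which one checks directly from $\Phi_\rho(q)^{N}/Z_{N+1}(\rho)=1/Z_N(\rho^{(q)})$ using $Z_N(\rho)=\prod_{i=0}^{N-1}\Phi_\rho(q^i)$ and $\Phi_\rho(1)=1$. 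This proves the theorem.

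The point requiring care --- and the reason for the restriction $\alpha_i^-\equiv0$ --- is that $\rho^{(q)}$ must again be an admissible specialization, i.e. the parameters \eqref{eq:q_modif_params} must satisfy \eqref{eq:alpha_pm_specialization}. Nonnegativity and $\hat\alpha_i^-\equiv0$ are immediate; since $\hat\alpha_i^+\le\alpha_i^+$, $\hat\beta_i^+\le\beta_i^+$, and $\hat\beta_i^-\le q^{-1}\beta_i^-$, the summability condition $\sum_i(\alpha_i^\pm+\beta_i^\pm)<\infty$ is preserved. The only delicate constraint is $\hat\beta_1^++\hat\beta_1^-\le1$: a direct computation shows that on the boundary $\beta_1^++\beta_1^-=1$ one has $\hat\beta_1^++\hat\beta_1^-=1$ identically, and since $\hat\beta_1^-$ is strictly increasing in $\beta_1^-$ while $\hat\beta_1^+$ is independent of it, the inequality holds throughout $\{\beta_1^++\beta_1^-\le1\}$. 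I expect this admissibility check --- together with the convergence of $\Phi_\rho$ on $\{q^i\}_{i\ge0}$, which again is where $\alpha_i^-\equiv0$ is used --- to be the main obstacle; the rest is the same Schur-function bookkeeping as in \Cref{sub:q_schur_TASEP}.
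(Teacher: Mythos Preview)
Your proof is correct and follows essentially the same route as the paper's: identify the harmonic family $\varphi_N(\lambda)=s_\lambda(\rho)/\prod_{i=0}^{N-1}\Phi_\rho(q^i)$, apply \Cref{thm:main_prop_qGibbs} together with the skew Cauchy identity to get $\hat\varphi_N(\mu)=q^{|\mu|}s_\mu(\rho)/\prod_{i=1}^{N}\Phi_\rho(q^i)$, and then read off the new parameters from the generating function $u\mapsto\Phi_\rho(qu)$ via Jacobi--Trudi. You are more explicit than the paper in two places --- the factorization $\Phi_\rho(qu)=\Phi_\rho(q)\,\Phi_{\rho^{(q)}}(u)$ that yields \eqref{eq:q_modif_params}, and the admissibility check $\hat\beta_1^++\hat\beta_1^-\le1$ --- but the underlying argument is the same.
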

Note that $\hat \alpha_i^+,\hat \beta_i^{\pm}\ge0$,
and $\hat \beta_1^++\hat \beta_1^-\le 1$.
\begin{proof}[Proof of \Cref{thm:action_on_q_Gibbs_Schur}]
	This follows from \Cref{thm:main_prop_qGibbs}
	similarly to the computation in the beginning of \Cref{sub:q_schur_TASEP}.
	Namely, denote \eqref{eq:Voiculescu_functions}
	by
	$\Psi(u;\rho)$.
	The $q$-Gibbs measure
	\eqref{eq:two_sided_q_Schur} 
	corresponds to the $q$-Gibbs harmonic family
	\begin{equation*}
		\varphi_N(\lambda)=
		\frac{s_\lambda(\rho)}
		{\Psi(1;\rho)\Psi(q;\rho)\Psi(q^2;\rho)\ldots\Psi(q^{N-1};\rho)}.
	\end{equation*}
	Note that $\Psi(1;\rho)=1$ but it is convenient to 
	include this factor here. 
	Note also that the condition $\alpha_i^{-}\equiv 0$ ensures that the 
	series $\Psi(q^m;\rho)$ converge for all $m\in \mathbb{Z}_{\ge1}$.
	The action of $\mathbb{L}^{(q)}$
	turns the $q$-Gibbs harmonic family $\{\varphi_N\}$ into
	\begin{equation*}
		\hat \varphi_N(\lambda)=
		\frac{q^{|\lambda|}s_\lambda(\rho)}
		{\Psi(q;\rho)\Psi(q^2;\rho)\ldots\Psi(q^{N};\rho)}
		=
		\frac{\det[\psi_{\lambda_i+j-i}(\rho)\,q^{\lambda_i+j-i}]_{i,j=1}^{N}}
		{\Psi(q;\rho)\Psi(q^2;\rho)\ldots\Psi(q^{N};\rho)}
		.
	\end{equation*}
	In particular, for $N=1$ we have
	\begin{equation*}
		\psi_n(\rho^{(q)})=\frac{\psi_n(\rho)\,q^{n}}{\Psi(q;\rho)},
		\qquad n\in \mathbb{Z},
	\end{equation*}
	which readily translates into
	the modification of the parameters \eqref{eq:q_modif_params} in the claim.
\end{proof}

Measures on interlacing arrays
of the form
$\mathbb{P}[(1,q,q^2,\ldots )\mid \rho]$
are not extreme $q$-Gibbs.
A classification of extreme $q$-Gibbs measures
is obtained in \cite{Gorin2010q} (note that our
$q$ corresponds to $1/q$ in that paper, so the description
of the boundary needs to be reversed).
Extreme $q$-Gibbs measures $\mathbb{P}_{\mathbf{n}}^{(q)}$
are parametrized by infinite sequences
\begin{equation*}
	\mathbf{n}=(n_1\ge n_2\ge \ldots ), \qquad  n_i\in \mathbb{Z}.
\end{equation*}
Moreover, $\lim_{N\to+\infty}\lambda^{(N)}_j=n_j$ for each 
fixed $j=1,2,\ldots$, where
$\lambda^{(N)}_j$ come from the random configuration 
distributed according to $\mathbb{P}_{\mathbf{n}}^{(q)}$.
It is not hard to show the following.
\begin{proposition}
	\label{prop:action_L_on_n}
	The action of the Markov map $\mathbb{L}^{(q)}$
	on extreme $q$-Gibbs measures corresponds to 
	the left shift in the space of parameters:
	\begin{equation*}
		\mathbb{P}^{(q)}_{(n_1,n_2,n_3,\ldots )}
		\,
		\mathbb{L}^{(q)}
		=
		\mathbb{P}^{(q)}_{(n_2,n_3,n_4,\ldots )}.
	\end{equation*}
\end{proposition}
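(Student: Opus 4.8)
The plan is to reduce the claim to an identity between $q$-Gibbs harmonic families and then verify that identity using the explicit description of the extreme families from \cite{Gorin2010q}. By \Cref{thm:main_prop_qGibbs} the map $\mathbb{L}^{(q)}$ preserves $\mathfrak{G}_q$, and a $q$-Gibbs measure is determined by its harmonic family $\{\varphi_N\}$; write $\{\varphi^{\mathbf{n}}_N\}$ for the harmonic family of $\mathbb{P}^{(q)}_{\mathbf{n}}$, so $\varphi^{\mathbf{n}}_N(\lambda)=\mathbb{P}^{(q)}_{\mathbf{n}}(\lambda^{(N)}=\lambda)/s_\lambda(1,q,\ldots,q^{N-1})$. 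Since the image $\mathbb{P}^{(q)}_{\mathbf{n}}\mathbb{L}^{(q)}$ is again $q$-Gibbs, it suffices, by the bijection between $q$-Gibbs measures and $q$-Gibbs harmonic families, to prove that
\begin{equation}
\label{eq:shift_harmonic_goal}
q^{|\mu|}\sum_{\lambda\colon\mu\prec\lambda}\varphi^{\mathbf{n}}_{N+1}(\lambda)=\varphi^{\mathbf{n}'}_N(\mu),
\qquad\mathbf{n}'=(n_2,n_3,\ldots),
\end{equation}
for every $N\ge1$ and every signature $\mu$ of length $N$; here the left-hand side is precisely $\hat\varphi^{\mathbf{n}}_N(\mu)$ as computed in \Cref{thm:main_prop_qGibbs}.

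To verify \eqref{eq:shift_harmonic_goal} I would substitute the explicit formula for the extreme $q$-Gibbs harmonic functions obtained in \cite{Gorin2010q}, keeping in mind that the parameter there corresponds to $1/q$ here and that the labelling by $\mathbf{n}$ is fixed by the relation $\lambda^{(N)}_j\to n_j$. These functions are $q$-deformed analogues of the classical extreme characters of the Gelfand--Tsetlin graph; with such a formula in hand, the left-hand side of \eqref{eq:shift_harmonic_goal} becomes a finite summation over the signatures $\lambda$ of length $N+1$ with $\mu\prec\lambda$, i.e.\ a $q$-analogue of a branching/Pieri identity, and the prefactor $q^{|\mu|}$ together with the homogeneity of the characters should let it collapse to the character attached to the shifted sequence $\mathbf{n}'$ at depth $N$. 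A quick consistency check is $\mathbf{n}=(n,n,n,\ldots)$: then $\varphi^{\mathbf{n}}_N$ is supported on the single rectangular signature $(n^N)$ with value $q^{-n\binom{N}{2}}$, $\mu\prec(n^{N+1})$ forces $\mu=(n^N)$, and \eqref{eq:shift_harmonic_goal} reduces to $q^{nN}q^{-n\binom{N+1}{2}}=q^{-n\binom{N}{2}}$, which holds.

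An alternative, probabilistic route is to couple $\{\lambda^{(i)}\}\sim\mathbb{P}^{(q)}_{\mathbf{n}}$ with its image $\{\theta^{(i)}\}$ under $\mathbb{L}^{(q)}$: by \Cref{def:qGibbs_map} and iterating \Cref{prop:main_lemma_level_general}, conditionally on $\lambda^{(N+1)}$ the truncated array $\theta^{(1)}\prec\cdots\prec\theta^{(N)}\prec\lambda^{(N+1)}$ obeys the $(q,q^2,\ldots,q^N,1)$-Gibbs conditional law, so in particular $\lambda^{(N+1)}_{j+1}\le\theta^{(N)}_j\le\lambda^{(N+1)}_j$; since $\lambda^{(N+1)}_j\to n_j$ a.s.\ one would then argue that $\theta^{(N)}_j\to n_{j+1}$ a.s.\ (using that the $j$-th coordinate is refreshed by a truncated geometric of parameter $q^N\to0$ over an a.s.\ bounded range, plus a Borel--Cantelli estimate), and conclude that the image has deterministic boundary $\mathbf{n}'$, hence equals $\mathbb{P}^{(q)}_{\mathbf{n}'}$. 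I expect the main obstacle to be exactly this last identification --- quantitatively controlling how far the early, non-negligible-rate updates can drag $\theta^{(N)}_j$ away from the eventual target $n_{j+1}$, equivalently showing that the harmonic-family operation in \eqref{eq:shift_harmonic_goal} sends an extreme family to an extreme family rather than to a genuine mixture; this is where the precise input from \cite{Gorin2010q} (or a sharp monotone-coupling estimate) is essential.
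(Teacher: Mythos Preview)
The paper does not actually supply a proof of this proposition; it only says ``it is not hard to show.'' So there is no argument in the paper to compare against. Your reduction via \Cref{thm:main_prop_qGibbs} to the harmonic-family identity \eqref{eq:shift_harmonic_goal} is correct and is almost certainly what the authors have in mind: with Gorin's explicit formula for the extreme harmonic functions $\varphi^{\mathbf{n}}_N$ plugged in, \eqref{eq:shift_harmonic_goal} should become a direct verification. You do not carry this step out, so as written your proposal is an outline rather than a proof, but the route is the right one.

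Two corrections to the details. First, your sanity check with $\mathbf{n}=(n,n,\ldots)$ assumes that $\mathbb{P}^{(q)}_{(n,n,\ldots)}$ is the delta mass on the constant array $\lambda^{(k)}=(n^k)$. In the signature setting of \Cref{sub:general_schur} this is not justified: the last coordinate $\lambda^{(N)}_N$ is unconstrained from below by interlacing, and the hypothesis $\lambda^{(N)}_j\to n_j$ is for \emph{fixed} $j$ only, so $\lambda^{(N)}_N$ can be strictly below $n$. Your displayed identity $q^{nN}q^{-n\binom{N+1}{2}}=q^{-n\binom{N}{2}}$ is therefore checking the wrong object. Second, in the probabilistic route you have correctly located the genuine gap. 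From $\theta^{(N)}\prec\lambda^{(N+1)}$ and monotonicity $\theta^{(N-1)}_j\le\theta^{(N)}_j$ one gets that $\theta^{(N)}_j$ is nondecreasing, bounded above by $n_j$, and bounded below by $n_{j+1}$ in the limit; your Borel--Cantelli observation that the $L^{(N)}_{q^N}$ update has $Y^{(j)}=0$ for all large $N$ then shows that eventually $\theta^{(N)}_j=\max\{\theta^{(N-1)}_j,\lambda^{(N+1)}_{j+1}\}$, which only freezes the limit $L_j$ at whatever value $\theta^{(\cdot)}_j$ has already attained by that random time. Ruling out $L_j>n_{j+1}$ --- equivalently, ruling out that the image is a nontrivial mixture of extremes --- does appear to need the explicit input from \cite{Gorin2010q}, exactly as you suspected.
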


In \cite{BG2011non} a decomposition
of the non-extreme $q$-Gibbs
measures
$\mathbb{P}[(1,q,q^2,\ldots )\mid \rho]$
onto the extreme ones $\mathbb{P}^{(q)}_{\mathbf{n}}$ 
is given in terms of a determinantal point
process on the set of shifted
labels. The shifted labels
in our notation are
$n_1-1>n_2-2>\ldots $, and they form a random 
point configuration on $\mathbb{Z}$ whose correlation
functions have a determinantal form. The action of
$\mathbb{L}^{(q)}$ 
on $\mathbf{n}$
from \Cref{prop:action_L_on_n}
removes the largest 
point in this determinantal process on $\mathbb{Z}$, and 
shifts all its other points 
by one to the right.
\begin{question}
	How to explicitly link
	the action of $\mathbb{L}^{(q)}$
	on $\mathbf{n}$
	with the modification of the parameters
	\eqref{eq:q_modif_params}
	of the 
	determinantal point process 
	describing 
	$\mathbb{P}[(1,q,q^2,\ldots )\mid \rho]$?
	Does this correspondence
	(between the action on the parameters of the kernel
	and the action on the underlying 
	random point configuration)
	survive any limit transition 
	to more familiar determinantal point processes
	(e.g., random matrix spectra or Airy$_2$)?
\end{question}

\subsection{Limit \texorpdfstring{$q\to1$}{q->1} and action on Gibbs measures}
\label{sub:q1_arrays}

The $q\to1$ limit of \Cref{thm:action_on_q_Gibbs_Schur}
can be obtained similarly to the argument in \Cref{sec:limit_q_1}.
Define by $\mathbb{L}_{\tau}$, $\tau\in \mathbb{R}_{\ge0}$, 
the continuous-time
Markov semigroup under which each 
particle $\lambda^{(k)}_j$ at each $k$-th level of
the interlacing array
independently jumps to the left into one of the possible locations $m$,
where
\begin{equation*}
	\max\{\lambda^{(k+1)}_{j+1},\lambda^{(k-1)}_j\}\le m\le \lambda^{(k)}_j-1,
\end{equation*}
at rate $k$ per each of these possible locations. 

However, this definition presents an issue since
in a generic interlacing array, under $\mathbb{L}_\tau$
infinitely many particles jump in finite time.
Moreover, because for any $k\in \mathbb{Z}_{\ge1}$
jumps of $\lambda^{(k)}_j$ depend on the $(k+1)$-st level,
one cannot simply restrict $\mathbb{L}_\tau$ to 
the first several levels. 
Therefore, we have to consider a smaller space 
of interlacing arrays:

\begin{definition}
	\label{def:issue_with_many_jumps_ok}
	Let the subset $\mathcal{S}^c\subset\mathcal{S}$ 
	consist of interlacing arrays
	$\{\lambda^{(N)}_j\}_{1\le j\le N}$
	satisfying $\lambda^{(N)}_j=0$ for all $N$ and all $J(N)\le j\le N$, 
	where $N-J(N)\to+\infty$ as $N\to+\infty$.
\end{definition}

For each fixed $K$, the restriction of $\mathbb{L}_\tau$ to 
\begin{equation*}
	\{\lambda^{(N)}_j\colon N\in \mathbb{Z}_{\ge1}, \ N-K+1\le j\le N\}
\end{equation*}
(that is, to the $K$ leftmost diagonals)
is a Markov process, in which only finitely many particles 
jump in finite time.
For different $K$, these Markov processes are compatible.
Therefore, $\mathbb{L}_\tau$ makes sense on the state 
space $\mathcal{S}^c$.
Below we denote by $\mathbb{L}_{\tau}$ the Markov semigroup constructed in this manner.

\begin{theorem}\label{thm:L_action_q_to_1_gibbs_general}
	The action of the semigroup $\mathbb{L}_\tau$ on extreme
	Gibbs measures $\mathbb{P}[\vec 1\mid \rho]$,
	where $\rho$ is a specialization as in 
	\eqref{eq:alpha_pm_specialization}--\eqref{eq:Voiculescu_functions}
	with $\alpha_i^-=\beta_i^-=0$ for all $i$, $\gamma^-=0$, and $\beta_1^+<1$, transforms the parameters of 
	$\rho$ exactly as in \eqref{eq:q_modif_params},
	but with $q$ replaced by $e^{-\tau}$.
\end{theorem}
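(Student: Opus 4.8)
The plan is to take the $q\to 1$ limit of an iterated version of \Cref{thm:action_on_q_Gibbs_Schur}, mimicking the argument of \Cref{sec:limit_q_1}. First I would iterate: since $\mathbb{L}^{(q)}$ is a fixed Markov map and \Cref{thm:action_on_q_Gibbs_Schur} keeps the spectral parameters $(1,q,q^2,\ldots)$ intact, applying $\mathbb{L}^{(q)}$ a total of $T$ times sends $\mathbb{P}[(1,q,q^2,\ldots)\mid\rho]$ to $\mathbb{P}[(1,q,q^2,\ldots)\mid\rho^{(q,T)}]$, where $\rho^{(q,T)}$ is the $T$-fold iterate of the parameter transformation \eqref{eq:q_modif_params}. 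The key algebraic point is that this transformation is conjugate to scaling by $q$: for the $\alpha^+$-parameters the Möbius substitution $\alpha\mapsto 1+\alpha^{-1}$ turns $\alpha\mapsto \alpha q/(1+\alpha-\alpha q)$ into multiplication by $q^{-1}$; for the $\beta^+$-parameters the substitution $\beta\mapsto \beta^{-1}-1$ does the same; and $\gamma^+\mapsto q\gamma^+$ is already multiplicative. (Under the hypotheses $\alpha_i^-=\beta_i^-=\gamma^-=0$ the remaining parameters stay zero.) Hence $\rho^{(q,T)}=\rho^{(q^T)}$, the transformation \eqref{eq:q_modif_params} with $q$ replaced by $q^T$, so that
\[
\mathbb{P}[(1,q,q^2,\ldots)\mid\rho]\,(\mathbb{L}^{(q)})^T=\mathbb{P}[(1,q,q^2,\ldots)\mid\rho^{(q^T)}].
\]

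Next I would introduce the scaling $q=e^{-\varepsilon}$, $T=\lfloor\tau/\varepsilon\rfloor$ as in \eqref{eq:q_scaling}, so that $q^T\to e^{-\tau}$, and let $\varepsilon\to0$. On the right-hand side, continuity of the two-sided ascending Schur process $\mathbb{P}[\,\cdot\mid\,\cdot\,]$ in its spectral parameters $(1,q,q^2,\ldots)\to\vec1$ and in the specialization parameters $\rho^{(q^T)}\to\rho^{(e^{-\tau})}$ yields convergence to $\mathbb{P}[\vec1\mid\rho^{(e^{-\tau})}]$, whose parameters are exactly those claimed. On the left-hand side one needs $(\mathbb{L}^{(q)})^T\to\mathbb{L}_\tau$; as in \Cref{sec:limit_q_1}, the rates computation \eqref{eq:rates_computation} shows that at each level $k$ the accumulated discrete-time jump probability of $\lambda^{(k)}_j$ into any admissible hole converges to an exponential clock of rate $k$ per hole, which is precisely the generator of $\mathbb{L}_\tau$. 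Since both $(\mathbb{L}^{(q)})^T$ and $\mathbb{L}_\tau$ act one level at a time and, restricted to the $K$ leftmost diagonals, involve only finitely many moving particles, I would carry out this Poisson-type limit on each fixed family of leftmost diagonals and then let $K\to\infty$.

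To make this rigorous I would establish an analogue of \Cref{lemma:finite_part_of_C}: for both measures appearing in the displayed identity, restricted to the $K$ leftmost diagonals, there is for any $\delta>0$ an order-closed finite subset carrying mass $>1-\delta$ uniformly in small $\varepsilon$, together with a monotone coupling of the $q$-deformed measures to the $q=1$ one. The hypotheses $\alpha_i^-=\beta_i^-=\gamma^-=0$ and $\beta_1^+<1$ are precisely what guarantees that $\mathbb{P}[\vec1\mid\rho]$ is supported on $\mathcal{S}^c$ (the $\beta^+$'s add columns, so $\beta_1^+<1$ controls the growth of $\ell(\lambda^{(N)})$), so that $\mathbb{L}_\tau$ restricted to finitely many leftmost diagonals is a genuine finite-rate Markov process and the truncation is legitimate.

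The main obstacle is the left-hand side: controlling $(\mathbb{L}^{(q)})^T\to\mathbb{L}_\tau$ on the infinite-depth array. Unlike in \Cref{sec:limit_q_1}, where the dynamics projected onto the leftmost particles, here the update of $\lambda^{(k)}_j$ depends on level $k+1$, so the truncation must be to leftmost \emph{diagonals} rather than to finitely many levels, and one must verify that the mass escaping any finite window in those diagonals is uniformly small in $\varepsilon$ --- this is exactly where the $\mathcal{S}^c$-support condition and the monotone coupling of the $q$-deformed measures enter, and it is the part requiring the most care.
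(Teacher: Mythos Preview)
Your proposal is correct and follows essentially the same route as the paper's own argument, which is only a brief ``Idea of proof'': iterate \Cref{thm:action_on_q_Gibbs_Schur}, observe that the parameter transformation \eqref{eq:q_modif_params} is a one-parameter semigroup (so the $T$-fold iterate replaces $q$ by $q^T$), and then take the Poisson-type limit $q=e^{-\varepsilon}$, $T=\lfloor\tau/\varepsilon\rfloor$ as in \Cref{sec:limit_q_1}. Your explicit M\"obius-conjugation verification of the semigroup property and your identification of the truncation-to-leftmost-diagonals issue (and the role of the $\mathcal{S}^c$ support, guaranteed by $\alpha_i^-=\beta_i^-=\gamma^-=0$, $\beta_1^+<1$) go beyond what the paper spells out, but along exactly the same lines.
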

\begin{proof}[Idea of proof]
	One can check that the Schur process $\mathbb{P}[\vec 1\mid \rho]$ 
	with
	$\alpha_i^-=\beta_i^-=0$ for all $i$, $\gamma^-=0$, and $\beta_1^+<1$
	is supported on the subset $\mathcal{S}^c$ described
	in \Cref{def:issue_with_many_jumps_ok}.
	Similarly to \Cref{sec:limit_q_1}, we see that
	under the scaling
	$q=e^{-\varepsilon}$, 
	$T=\lfloor \tau/\varepsilon \rfloor$, $\varepsilon\to0$, 
	we have $(\mathbb{L}^{(q)})^T\to \mathbb{L}_{\tau}$.
	Next, the modification of the parameters
	\eqref{eq:q_modif_params}
	is a one-parameter semigroup. That is, applying $\mathbb{L}^{(q)}$
	one more time replaces $q$ everywhere in 
	\eqref{eq:q_modif_params} by $q^2$. Because
	$q^T\sim e^{-\tau}$, we get the result.
\end{proof}

In particular, $\mathbb{L}_{\tau}$ maps the 
push-block process of \cite{BorFerr2008DF} (see \Cref{def:kpz_growth}
below)
backwards in time in the same sense as \Cref{thm:intro_action_on_TASEP}.

\subsection{Iterated R maps}
\label{sub:iterated_R}

Consider the maps $R^{(j)}_\alpha$ defined in 
\Cref{sec:Markov_maps}. Similarly to
\Cref{sub:iterated_L_maps},
we can define the iterated R map $\mathbb{R}^{(q)}$ by
\begin{equation*}
	\mathbb{R}^{(q)}
	:=
	R^{(1)}_{q}
	R^{(2)}_{q^2}
	R^{(3)}_{q^3}
	\ldots 
\end{equation*}
(this definition has the same formal meaning as for the map $\mathbb{L}^{(q)}$, see
\Cref{sub:iterated_L_maps}).
The map $\mathbb{R}^{(q)}$ acts nicely on 
$q^{-1}$-Gibbs measures
(i.e., corresponding to $\vec c=(1,q^{-1},q^{-2},\ldots )$).
Namely, one can check that an analogue of 
\Cref{thm:main_prop_qGibbs}
holds, with $q$ replaced by $q^{-1}$
in the definition of the harmonic functions and in \eqref{eq:action_q_Gibbs_main_thm}.
The $q\to1$ continuous-time limit $\mathbb{R}_{\tau}$
of $\mathbb{R}^{(q)}$ is also readily defined 
with the help of \Cref{def:issue_with_many_jumps_ok}
--- this is just the mirroring 
of $\mathbb{L}_{\tau}$ from \Cref{sub:general_schur},
in which all particles jump to the right.
One can obtain the following analogue of \Cref{thm:action_on_q_Gibbs_Schur,thm:L_action_q_to_1_gibbs_general}
for the action of $\mathbb{R}^{(q)}$ and $\mathbb{R}_{\tau}$ on $q^{-1}$-Gibbs Schur processes:
\begin{theorem}
	\label{thm:R_action_general}
	Let $\rho$ be a specialization 
	as in 
	\eqref{eq:alpha_pm_specialization}--\eqref{eq:Voiculescu_functions}
	such that $\alpha_i^{+}=0$ for all $i$.
	We have
	for all $0<q<1$:
	\begin{equation*}
		\mathbb{P}[(1,q^{-1},q^{-2},\ldots )\mid \rho]
		\,
		\mathbb{R}^{(q)}
		=
		\mathbb{P}[(1,q^{-1},q^{-2},\ldots )\mid \rho^{(1/q)}],
	\end{equation*}
	where $\rho^{(1/q)}$ 
	has modified parameters as in \eqref{eq:q_modif_params},
	but with $q$ replaced
	by $1/q$.
	Moreover, if $\alpha_i^+=\beta_i^+=0$ for all $i$, $\gamma^+=0$, and $\beta_1^+<1$, 
	then 
	$\mathbb{P}[\vec 1\mid \rho]\,
	\mathbb{R}_{\tau}
	=
	\mathbb{P}[\vec 1\mid \rho^{(e^\tau)}]$,
	where $\rho^{(e^\tau)}$ is defined in a similar way.
\end{theorem}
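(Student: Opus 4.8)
The argument is the mirror image of the proofs of \Cref{thm:main_prop_qGibbs,thm:action_on_q_Gibbs_Schur,thm:L_action_q_to_1_gibbs_general}, with $q$ replaced everywhere by $q^{-1}$, left jumps by right jumps, and the hypothesis $\alpha_i^-=0$ by $\alpha_i^+=0$; so instead of reproving everything I would explain how each step transports. First I would establish the $\mathbb{R}^{(q)}$-analogue of \Cref{thm:main_prop_qGibbs}: $\mathbb{R}^{(q)}$ preserves the class of $q^{-1}$-Gibbs measures on $\mathcal{S}$ (now allowing signature-valued interlacing arrays), sending a $q^{-1}$-Gibbs harmonic family $\{\varphi_N\}$ — i.e. one satisfying \eqref{eq:q_Gibbs_harmonic} with every $q^{N-1}$ replaced by $q^{-(N-1)}$ — to $\hat\varphi_N(\mu)=q^{-|\mu|}\sum_{\lambda\colon\mu\prec\lambda}\varphi_{N+1}(\lambda)$. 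Since the spectral parameters $c_i=q^{-(i-1)}$ are increasing, \Cref{prop:main_lemma_level_general} applies to the $R$-maps, $R^{(j)}_{q^{j}}$ interchanging $q^{-(j-1)}$ with $q^{-j}$, so $\mathbb{R}^{(q)}$ carries the bottommost parameter $c_1=1$ up to infinity and leaves the proportional sequence $(q^{-1},q^{-2},\dots)$ behind (the mirror of \Cref{fig:braid_q}). The chain of Schur-function identities in the proof of \Cref{thm:main_prop_qGibbs} — conditioning on $\lambda^{(N+1)}=\lambda$, propagating the $q^{-1}$-Gibbs conditional law through $R^{(1)}_q,\dots,R^{(N)}_{q^N}$, using homogeneity of skew Schur functions, and collapsing internal sums by the branching rule \eqref{eq:branching} — then goes through verbatim, as does the check that $\{\hat\varphi_N\}$ is again $q^{-1}$-Gibbs harmonic.

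Next I would apply this to the $q^{-1}$-Gibbs Schur process $\mathbb{P}[(1,q^{-1},q^{-2},\dots)\mid\rho]$, which has harmonic family $\varphi_N(\lambda)=s_\lambda(\rho)/\bigl(\Psi(1;\rho)\Psi(q^{-1};\rho)\cdots\Psi(q^{-(N-1)};\rho)\bigr)$, where $\Psi(u;\rho)$ denotes the generating function \eqref{eq:Voiculescu_functions}. Here the hypothesis $\alpha_i^+=0$ is exactly what makes $\Psi(q^{-m};\rho)$ converge, and stay positive, for all $m\ge1$ — otherwise a pole at $u=1+1/\alpha_i^+$ would be crossed — and hence what makes the sums in the previous step finite; this is the mirror of the role played by $\alpha_i^-=0$ in \Cref{thm:action_on_q_Gibbs_Schur}. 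Plugging $\varphi_{N+1}$ into $\hat\varphi_N$ and using the two-sided skew Cauchy identity (the signature version of \eqref{eq:cauchy} with $\varkappa=\varnothing$, whose single-variable Cauchy kernel is $\Psi(\cdot;\rho)$) together with the row/column homogeneity $q^{-|\lambda|}s_\lambda(\rho)=\det\bigl[\psi_{\lambda_i+j-i}(\rho)\,q^{-(\lambda_i+j-i)}\bigr]$ of the Jacobi--Trudi formula \eqref{eq:Jacobi_Trudi}, I would obtain $\hat\varphi_N(\lambda)=q^{-|\lambda|}s_\lambda(\rho)/\bigl(\Psi(q^{-1};\rho)\cdots\Psi(q^{-N};\rho)\bigr)$, and reading off $N=1$ gives $\psi_n(\rho^{(1/q)})=\psi_n(\rho)\,q^{-n}/\Psi(q^{-1};\rho)$ for all $n\in\mathbb{Z}$, which translates into \eqref{eq:q_modif_params} with $q$ replaced by $1/q$ (and $\hat\alpha_i^{\pm}=0$). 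This proves the first assertion.

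For the statement about $\mathbb{R}_\tau$ I would repeat the scaling of \Cref{sec:limit_q_1}: set $q=e^{-\varepsilon}$, $T=\lfloor\tau/\varepsilon\rfloor$, and let $\varepsilon\to0$. As in the sketch of \Cref{thm:L_action_q_to_1_gibbs_general}, one first checks that $\mathbb{P}[\vec1\mid\rho]$ with the stated hypotheses on $\rho$ is supported on the subset $\mathcal{S}^c$ of \Cref{def:issue_with_many_jumps_ok}, so that $\mathbb{R}_\tau$ is well-defined there and $(\mathbb{R}^{(q)})^T\to\mathbb{R}_\tau$ on $\mathcal{S}^c$. The parameter modification just obtained forms a one-parameter semigroup — equivalently $\Psi(u;\rho)\mapsto\Psi(u/q;\rho)/\Psi(1/q;\rho)$ composes in the parameter $q^{-1}$ — so iterating $T$ times replaces $q$ by $q^{-T}\sim e^{\tau}$, yielding $\mathbb{P}[\vec1\mid\rho]\,\mathbb{R}_\tau=\mathbb{P}[\vec1\mid\rho^{(e^\tau)}]$.

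I expect the main obstacle to be the convergence bookkeeping in the first two steps. Unlike the $q$-Gibbs case, where all spectral parameters $q^{i-1}$ tend to $0$ and every series is automatically summable, here the parameters $q^{-(i-1)}$ blow up, so one must argue that $\Psi(q^{-m};\rho)$ and $\sum_{\mu\prec\lambda}\varphi_{N+1}(\lambda)$ stay finite — which is precisely why $\alpha_i^+=0$ cannot be dropped — and that ``pushing $c_1=1$ to infinity'' remains legitimate for signature-valued arrays of infinite depth. Once that is controlled, every remaining identity is a verbatim mirror of the $q$-Gibbs computations already carried out in \Cref{sec:action_on_q_Gibbs,sec:limit_q_1}.
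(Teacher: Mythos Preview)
Your proposal is correct and follows exactly the approach the paper intends: the paper does not give a separate proof of \Cref{thm:R_action_general} at all, merely stating that it is ``an analogue of \Cref{thm:action_on_q_Gibbs_Schur,thm:L_action_q_to_1_gibbs_general}'' obtained by the $q\leftrightarrow q^{-1}$, $L\leftrightarrow R$ mirroring, which is precisely the argument you spell out. Your identification of the role of $\alpha_i^+=0$ (convergence of $\Psi(q^{-m};\rho)$) as the mirror of $\alpha_i^-=0$ in the $L$-case is the one substantive point to check, and you have it right.
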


\begin{question}
	\label{q:forward_map}
	Is it possible to extend the definition of 
	$\mathbb{R}_\tau$ to Schur processes with $\gamma^+>0$?
	(This is equivalent to extending $\mathbb{L}_\tau$ to 
	the case $\gamma^->0$.)
\end{question}

If such an extension is possible, then $\mathbb{R}_\tau$ would turn the 
time $t$ in the 
Schur process $\mathbb{P}[\vec 1\mid \rho_t]$
(with the Plancherel specialization $\gamma^+=t$ and all other parameters zero)
into $e^\tau t$,
that is, \emph{forward}.
Note that this process 
would move infinitely many particles in finite time
and move individual particles very far, too.

Recall that $\mathbb{P}[\vec 1\mid \rho_t]$
can be generated by the push-block dynamics
(\Cref{def:kpz_growth} below).
Under this dynamics, 
the rightmost components 
$\{\lambda^{(N)}_1\}$
of the interlacing array evolve as a PushTASEP, a
close relative of TASEP, but with a pushing mechanism
\cite{BorFerr08push}, \cite{BorFerr2008DF}.
Therefore, a positive answer to \Cref{q:forward_map}
would lead to a continuous-time semigroup
which maps PushTASEP forward in time.

\subsection{Arrays of finite depth}
\label{sub:c_Gibbs_finite}

Fix $N\in \mathbb{Z}_{\ge1}$
and let $\mathcal{S}^{\lambda,N}$ be the space
of interlacing arrays
$\lambda^{(1)}\prec \ldots\prec\lambda^{(N-1)}\prec\lambda^{(N)} $
with fixed top row $\lambda^{(N)}=\lambda$,
where $\lambda=(\lambda_1\ge \ldots\ge \lambda_N\ge0 )$, 
$\lambda_i\in \mathbb{Z}$.
Fix pairwise distinct spectral parameters 
$c_1,\ldots,c_N>0 $.

Recall the single level Markov maps $L^{(j)}_{\alpha}$,
$R^{(j)}_{\alpha}$, $j=1,\ldots,N-1$,
defined in 
\Cref{sec:Markov_maps}. 
Consider the product space $\widetilde{\mathcal{S}}^{\lambda,N}:=
\mathcal{S}^{\lambda,N}\times \mathfrak{S}_N$,
where $\mathfrak{S}_N$ is the symmetric group.
For each elementary permutation $s_i=(i,i+1)$, $1\le i\le N-1$, 
define the Markov map $T(s_i)$ on 
$\widetilde{\mathcal{S}}^{\lambda,N}$ 
as follows. On 
the $\mathfrak{S}_N$ part it 
deterministically acts by $\sigma\mapsto s_i \sigma$.
On each fiber $\mathcal{S}^{\lambda,N}\times \{\sigma\}$
it acts as the Markov map 
\begin{equation}
	\label{eq:L_R_choice_in_permutation}
	T(s_i):=
	\begin{cases}
		L^{(i)}_{c_{\sigma(i+1)}/c_{\sigma(i)}},& 
		\textnormal{if $c_{\sigma(i)}>c_{\sigma(i+1)}$};
		\\[7pt]
		R^{(i)}_{c_{\sigma(i)}/c_{\sigma(i+1)}},& 
		\textnormal{otherwise}.
	\end{cases}
\end{equation}
Note that $T(s_i)$ do not satisfy the symmetric group
relations when acting on 
$\widetilde{\mathcal{S}}^{\lambda,N}$
(in particular, $T(s_i)^2$ is not identity).

Let $\mathbb{M}^{\lambda}_{\vec c}$
denote the $\vec c$-Gibbs measure
on $\mathcal{S}^{\lambda,N}$:
\begin{equation*}
	\mathbb{M}^{\lambda}_{\vec c}(\lambda^{(1)},\ldots,\lambda^{(N-1)} )
	=
	\frac{s_{\lambda^{(1)}}(c_1)
	s_{\lambda^{(2)}/\lambda^{(1)}}(c_2)\ldots 
	s_{\lambda/\lambda^{(N-1)}}(c_N)}{s_{\lambda}(c_1,\ldots,c_{N} )}.
\end{equation*}
Note that in contrast with arrays of infinite
depth (cf. \Cref{sub:c_Gibbs_def}), here the $\vec c$-Gibbs property 
determines the measure $\mathbb{M}_{\vec c}^{\lambda}$ uniquely.

Let $w_N=(N,N-1,\ldots,2,1 )$ be 
the longest element in the symmetric group $\mathfrak{S}_N$,
and $w_N=s_{i_1}s_{i_2}\ldots s_{i_{N(N-1)/2}} $,
$1\le i_k\le N-1$,
be its reduced word decomposition which is also assumed fixed.
Define 
\begin{equation}
	\label{eq:T_map}
	\mathbb{T}:=
	T(s_{i_1}) 
	T(s_{i_2})
	\ldots 
	T(s_{i_{N(N-1)/2}})
\end{equation}
(in this notation we do not indicate the dependence 
on the choice of a particular reduced word).
Clearly, $\mathbb{T}^2$ acts as the identity on the 
$\mathfrak{S}_N$ part of 
$\widetilde{\mathcal{S}}^{\lambda,N}$.
Moreover, $\mathbb{T}^2$ preserves the measure
$\mathbb{M}_{\vec c}^{\lambda}$ viewed as the measure on 
$\mathcal{S}^{\lambda,N}\times\{ e \}$.
Indeed, this is because by \Cref{prop:main_lemma_level_general}
each $T(s_i)$ maps $\mathbb{M}_{\sigma \vec c}^{\lambda}$
to $\mathbb{M}_{s_i \sigma \vec c}^{\lambda}$. The map
$\mathbb{T}^2$ can be viewed as a sampling algorithm for the 
measure~$\mathbb{M}_{\vec c}^{\lambda}$:
\begin{proposition}
	\label{prop:sampling_algo}
	Start with any (nonrandom) interlacing array 
	$(\lambda^{(1)}\prec\ldots\prec\lambda^{(N-1)}\prec\lambda)$
	and apply the Markov map $\mathbb{T}^{2k}$ to it. 
	The distribution of the resulting
	random interlacing array 
	converges, as $k\to+\infty$, to
	$\mathbb{M}_{\vec c}^{\lambda}$ in the total variation norm.
\end{proposition}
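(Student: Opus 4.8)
The plan is to recognize $\mathbb{T}^{2}$ as a Markov chain on the \emph{finite} set $\mathcal{S}^{\lambda,N}$ — it preserves the fiber $\mathcal{S}^{\lambda,N}\times\{e\}$ because $w_N^{2}=e$ — and then to invoke the standard convergence theorem for finite Markov chains. By the discussion preceding the Proposition, \Cref{prop:main_lemma_level_general} applied letter by letter along the fixed reduced word for $w_N$ shows that $\mathbb{M}_{\vec c}^{\lambda}$ is $\mathbb{T}^{2}$-invariant, and this measure has full support since every one-variable skew weight $s_{\lambda^{(j)}/\lambda^{(j-1)}}(c_j)=c_j^{\,|\lambda^{(j)}|-|\lambda^{(j-1)}|}$ is strictly positive. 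So it remains to prove that $\mathbb{T}^{2}$ is aperiodic and irreducible; convergence in total variation to $\mathbb{M}_{\vec c}^{\lambda}$ from any (in particular deterministic) initial state is then automatic.

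Aperiodicity is immediate: each truncated geometric $Y_\alpha(A)$ with $\alpha\in(0,1)$ attains its maximal value $A$ with positive probability, and letting every truncated geometric occurring inside $\mathbb{T}$ do so leaves the array unchanged, so $\mathbb{T}(A,A)>0$, hence $\mathbb{T}^{2}(A,A)>0$, for every $A$. Full support of the invariant measure forces every state of $\mathbb{T}^2$ to be positively recurrent, so there are no transient states and the state space is a disjoint union of closed recurrent communicating classes. Consequently, to obtain irreducibility it suffices to exhibit a single state reachable from every state: if a state $A_{\ast}$ is reachable from $B$ and the class of $B$ is closed, then $A_{\ast}$ lies in that class, and since this holds for all $B$ there is only one class.

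I would take $A_{\ast}=A_{\min}$, the coordinatewise-smallest interlacing array with top row $\lambda$, given by $(A_{\min})^{(j)}_k=\lambda_{k+N-j}$, and show $\mathbb{T}^{2k}(B,A_{\min})>0$ for every $B$ and all large $k$. Fix $B$ and consider the event $\mathcal{E}$ that in each application of $\mathbb{T}^{2}$ every $R$-type constituent map does nothing (all its truncated geometrics output their maxima) while every $L$-type constituent map pushes its row maximally to the left, i.e.\ replaces each $\lambda^{(j)}_k$ by $\max\{\lambda^{(j-1)}_k,\lambda^{(j+1)}_{k+1}\}$. Since $\mathcal{S}^{\lambda,N}$ is finite, $\mathcal{E}$ has probability at least some $\varepsilon>0$ per step. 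On $\mathcal{E}$ the array is non-increasing in the coordinatewise order and bounded below by $A_{\min}$, hence eventually constant, equal to some $A^{\star}\ge A_{\min}$, and the number of steps before stabilization is bounded in terms of $\lambda$ and $N$ only (each non-stationary step decreases the finite total drop from $B$ to $A_{\min}$ by at least one).

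The crucial point — and the one I expect to be the main obstacle — is that \emph{in every application of $\mathbb{T}^{2}$ each row $j\in\{1,\dots,N-1\}$ receives at least one $L$-type map}. To prove it, track the spectral-parameter vector $\vec d=(c_{\sigma(1)},\dots,c_{\sigma(N)})$ carried along by $\mathbb{T}^{2}$: it starts and ends at $\vec c$; a swap at positions $(j,j+1)$ is an $L$-move precisely when $d_j>d_{j+1}$ (by \eqref{eq:L_R_choice_in_permutation}), in which case it raises $\sum_{i>j}d_i$, whereas an $R$-move at $(j,j+1)$ lowers this sum and all swaps at other positions leave it unchanged; since at least one swap at $(j,j+1)$ does occur (every reduced word for $w_N$ involves $s_j$) and $\vec d$ returns to $\vec c$, not all of these swaps can be $R$-moves. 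Granting this, once the array on $\mathcal{E}$ has stabilized at $A^{\star}$, the maximal-left push is a no-op on every row, so $A^{\star}$ is a fixed point of the monotone operator $F:(\lambda^{(j)}_k)\mapsto(\max\{\lambda^{(j-1)}_k,\lambda^{(j+1)}_{k+1}\})$ with top row pinned to $\lambda$; a short downward induction on the row index — equivalently, unrolling the fixed-point relation and observing that any monotone upward path to the top row reaches $\lambda$ at an index $\ge k+N-j$ — identifies $A_{\min}$ as the unique fixed point of $F$. Hence $A^{\star}=A_{\min}$, which gives $\mathbb{T}^{2k}(B,A_{\min})>0$ for all large $k$, establishes reachability of $A_{\min}$ from every state, and with the previous paragraph yields irreducibility, completing the proof.
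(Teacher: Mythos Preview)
Your proof is correct. Aperiodicity is handled exactly as in the paper (every truncated geometric can return its maximal value, so $\mathbb{T}^2$ fixes each array with positive probability). For irreducibility the paper takes a different, shorter route: it asserts that $\mathbb{T}^2$ assigns positive probability to changing a \emph{single} entry $\lambda^{(k)}_j$ while leaving all others fixed, and that such single-entry moves connect $\mathcal{S}^{\lambda,N}$. You instead drive every state to the coordinatewise-minimal array $A_{\min}$ by forcing all $R$-moves to idle and all $L$-moves to push maximally left. Both arguments ultimately rest on the same hidden fact---that in one pass of $\mathbb{T}^2$ every row $j$ is touched by at least one $L$-move (and, symmetrically, at least one $R$-move)---which you prove cleanly via the conservation of $\sum_{i>j}c_{\sigma(i)}$, whereas the paper leaves it implicit (without it, the paper's ``change one entry in either direction'' is not justified). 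Your approach is longer but fully self-contained, and the fixed-point characterization of $A_{\min}$ is a nice bonus; the paper's sketch is shorter but asks the reader to supply essentially the same balancing argument.
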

\begin{proof}
	This follows from the standard convergence 
	theorem for Markov chains on finite spaces.
	Indeed, 
	the Markov chain corresponding to $\mathbb{T}^{2k}$ is 
	\begin{itemize}
		\item 
			aperiodic since $\mathbb{T}^2$ assigns positive probability
			to the trivial move;
		\item 
			irreducible
			because $\mathbb{T}^2$ assigns positive probability 
			to changing only one 
			entry $\lambda^{(k)}_j$, $1\le j\le k\le N-1$
			in the interlacing array 
			(the set $\mathcal{S}^{\lambda,N}$
			is connected by such individual changes).
	\end{itemize}
	This completes the proof.
\end{proof}

\begin{question}
	How fast is the convergence in \Cref{prop:sampling_algo},
	depending on the system size (which is $\sim N \lambda_1$)?
	What is the mixing time of $\mathbb{T}$?
\end{question}

\subsection{$q$-distributed lozenge tilings}
\label{sub:q_lozenge}

Let us now consider a concrete case of the 
setup outlined in 
the previous
\Cref{sub:c_Gibbs_finite}.
Fix $N$ and the top row $\lambda=(b,b,\ldots,b,0,0\ldots,0)$, where $b$ repeats $a$ times and $0$ repeats $c$ times, 
with $a+c=N$.
Then interlacing arrays
of depth $N$ and top row $\lambda$ are in bijection with 
lozenge tilings of a hexagon with sides $a,b,c,a,b,c$,
or, equivalently, with boxed plane partitions
(see \Cref{fig:tiling} for an illustration and, e.g., \cite{BorodinPetrov2013Lect} for more details).

\begin{figure}[htpb]
	\centering
	\includegraphics[width=.45\textwidth]{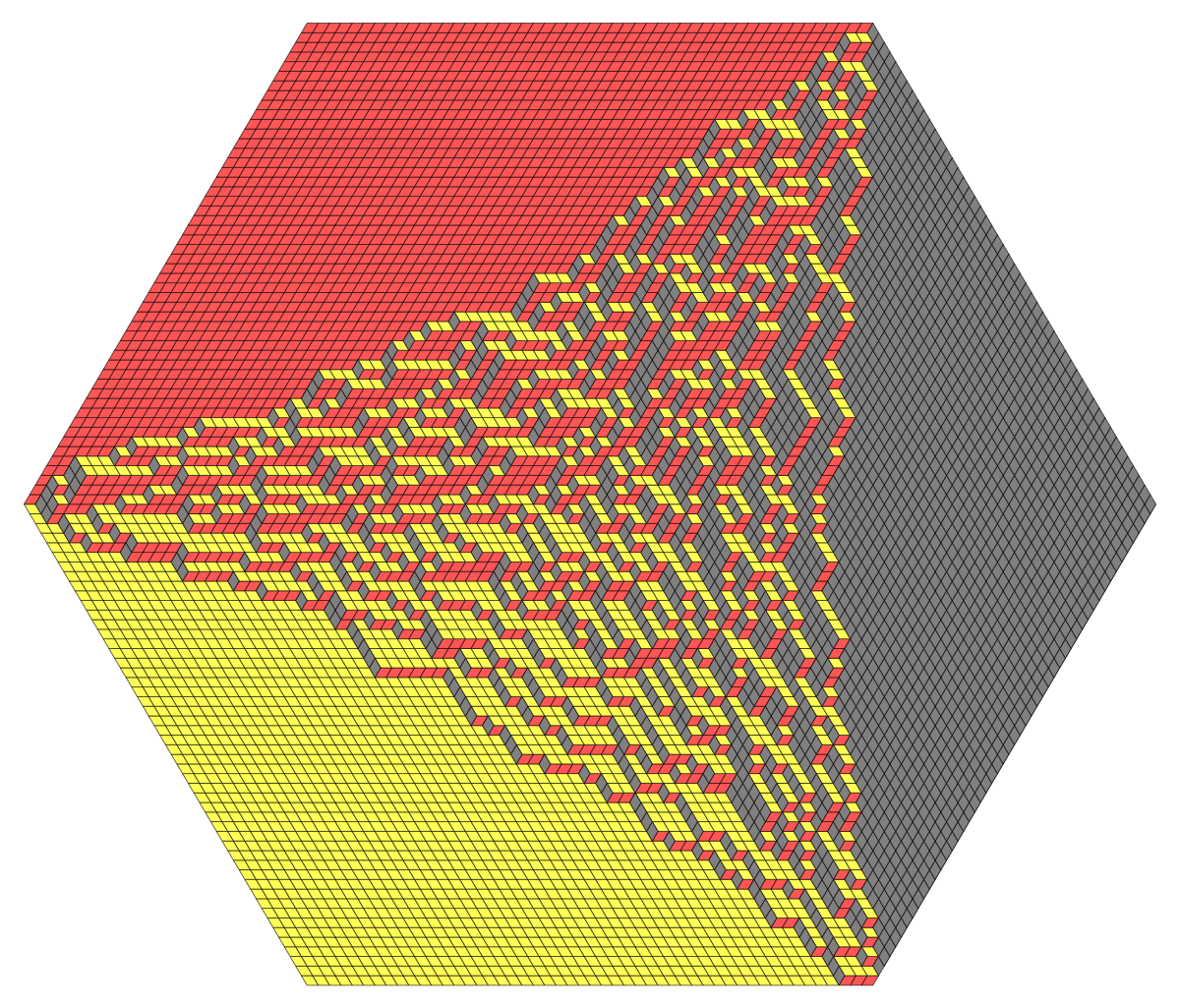}
	\hspace{.07\textwidth}
	\includegraphics[width=.45\textwidth]{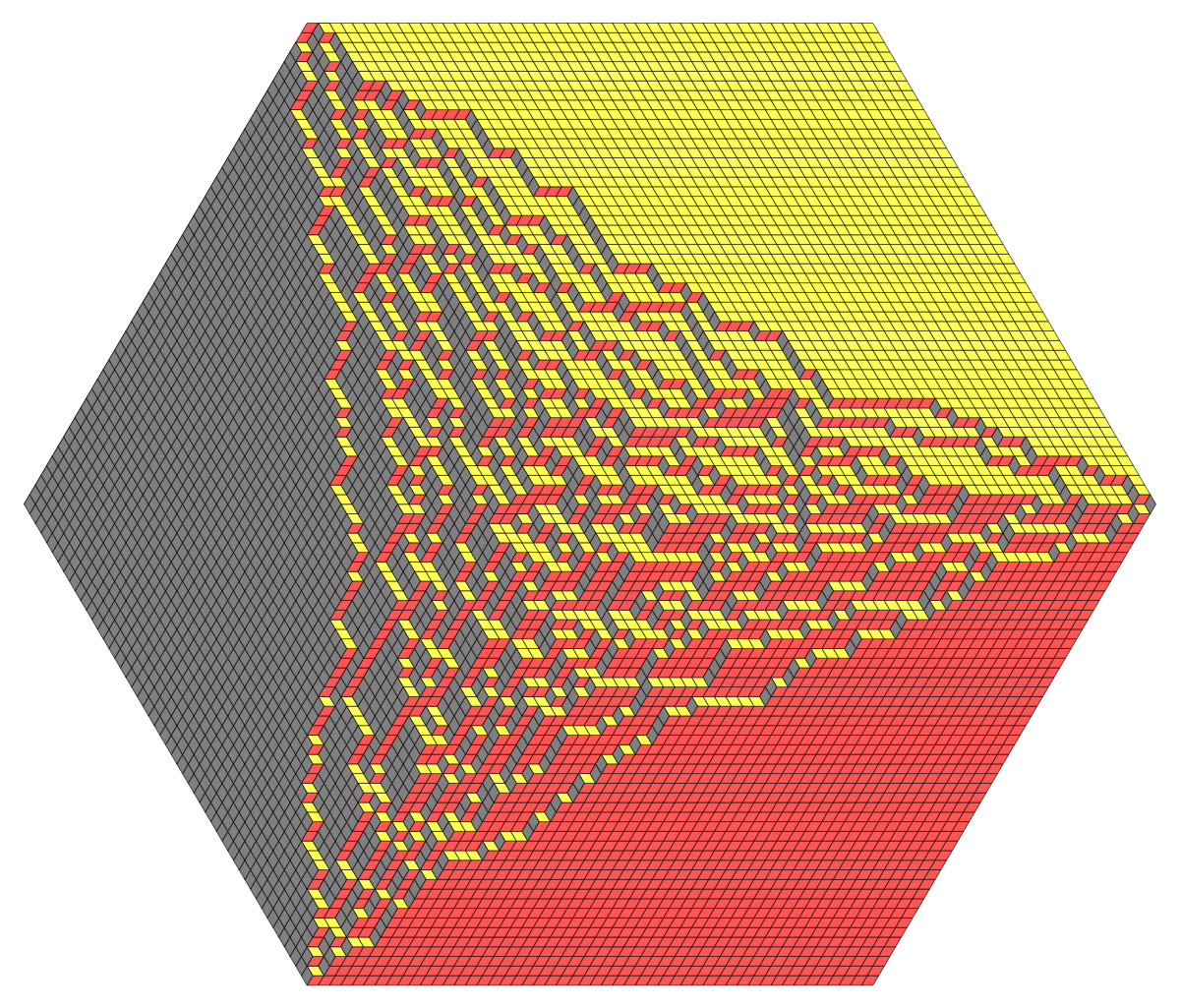}
	\caption{Samples from the measures $\mathbb{M}_{q^{-1}}$ (left) and
	$\mathbb{M}_{q}$ (right) with $a=b=c=50$ and $q=0.95$.
	The sample on the left is generated by the shuffling
	algorithm of \cite{borodin-gr2009q}, and the picture on 
	the right is the result of applying the map $\mathbb{T}$.}
	\label{fig:tiling}
\end{figure}

Let $\mathbb{M}_{q^{-1}}$ and $\mathbb{M}_{q}$ denote the 
measures under which the probability weight
of a lozenge tiling is proportional to 
$q^{-\mathrm{vol}}$ or $q^{\mathrm{vol}}$, respectively, 
where the volume is defined in \eqref{eq:volume}.
These two measures are 
$\vec c$-Gibbs with $\vec c=(1,q,q^2,\ldots,q^{N-1} )$ and 
$\vec c=(q^{N-1},\ldots,q,1 )$, respectively (recall that multiplying $\vec c$ by a scalar
does not change the $\vec c$-Gibbs property).

Take the reduced word 
\begin{equation*}
	w_N=
	(s_{1}s_2\ldots s_{N-1})
	(s_{1}s_2\ldots s_{N-2})
	\ldots 
	(s_1s_2)
	(s_1),
\end{equation*}
and let $\mathbb{T}$ be the corresponding Markov map \eqref{eq:T_map}.
One readily sees that the action of $\mathbb{T}$ on $\mathbb{M}_{q^{-1}}$:
\begin{itemize}
	\item 
		Turn $\mathbb{M}_{q^{-1}}$ into $\mathbb{M}_q$;
	\item 
		Almost surely moves vertical lozenges (see \Cref{fig:tiling})
		to the left because in \eqref{eq:L_R_choice_in_permutation}
		we always choose the option L;
	\item Changes the $(N-1)$-st row of the tiling only once, 
		the $(N-2)$-nd only twice, and so on.
\end{itemize}

An exact sampling algorithm for $\mathbb{M}_{q^{-1}}$
was presented in 
\cite{borodin-gr2009q}.
Starting with the exact sample of $\mathbb{M}_{q^{-1}}$ (\Cref{fig:tiling}, left)
and applying $\mathbb{T}$, 
we obtain an exact sample of $\mathbb{M}_{q}$ (\Cref{fig:tiling}, right),
while randomly moving the vertical lozenges to the left.
An implementation of this mapping 
$\mathbb{M}_{q^{-1}}\,\mathbb{T}=\mathbb{M}_q$ with all the intermediate steps
can be found online 
\cite{PetrovZhang2019simul}.

The map $\mathbb{T}$ works in the same way for an arbitrary
top row $\lambda$ (when the polygon being tiled
is not necessarily a hexagon, but can be a general sawtooth domain
as in, e.g., \cite{Petrov2012GT}).
The advantage of the hexagon case is the presence of the exact sampling
algorithm \cite{borodin-gr2009q}.

\begin{question}
	Consider lozenge tilings of growing sawtooth domains with top rows
	$\lambda=\lambda(N)$ which depend on $N$ in some way.
	Can the symmetry
	of the $q^{\pm\mathrm{vol}}$ measures
	manifested by the map $\mathbb{T}$
	be utilized to obtain the limit shape and fluctuations of the
	leftmost piece of the
	frozen boundary as $N\to+\infty$? 
	
	Here by the leftmost piece we mean the part of the curve separating
	the leftmost region occupied by only vertical lozenges,
	and the liquid region.
	Existence and characterization of limit shapes for $q^{\pm\mathrm{vol}}$ 
	is due to \cite{CohnKenyonPropp2000},
	\cite{OkounkovKenyon2007Limit},
	and some explicit formulas were obtained recently in
	\cite{DiFran2019qvol}.
\end{question}

\subsection{Dynamics in the bulk}
\label{sub:bulk_limits}

Consider the
Schur process $\mathbb{P}[\vec 1\mid \rho_t]$ (also sometimes known as the 
Plancherel measure for the infinite-dimensional unitary group).
It is convenient to use lozenge tiling interpretation
of interlacing arrays as in the previous \Cref{sub:q_lozenge}.
From 
\cite{BorodinKuan2007U},
\cite{BorFerr2008DF}
it is known that as $N$, $k$, and $t$ go to infinity proportionally to each other, 
the local lattice configuration 
of lozenges
around 
each
$\lambda^{(N)}_k$
converges to the ergodic translation invariant Gibbs measure
on lozenges tilings of the whole plane (see \Cref{fig:Gibbs_tilings} for an illustration).
Such ergodic measures form a two-parameter family \cite{Sheffield2008}.
As parameters one can take the densities of two of the three types of lozenges.
We remark that the ergodic Gibbs measures are far from being independent
Bernoulli ones. In particular, 
the joint correlations of lozenges possess a determinantal structure
\cite{okounkov2003correlation}.

\begin{figure}[htpb]
	\centering
	\includegraphics[width=.65\textwidth]{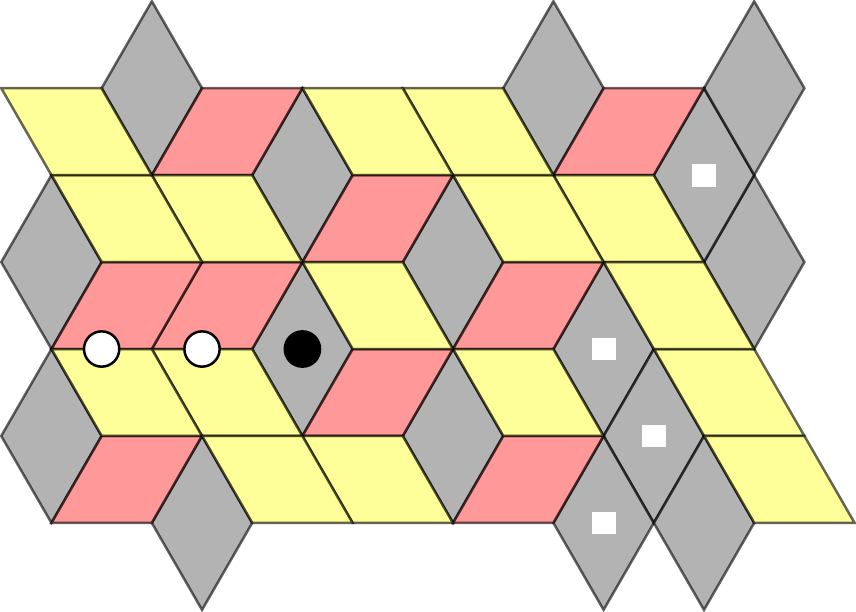}

	\caption{A lozenge configuration in the bulk and a possible move 
	under the bulk limit of $\mathbb{L}_{\tau}$: the vertical lozenge with a black dot
	can move to one of the white dotted locations, at rate
	$1$ per white dot. Square marks indicate lozenges 
	which are blocked in the 
	push-block dynamics.}
	\label{fig:Gibbs_tilings}
\end{figure}

We say that $(N,k,t)$ correspond to the \emph{bulk} of the system
if the limiting density of each of the types of lozenges around $\lambda^{(N)}_{k}(t)$
is positive.
One can also consider the bulk limit of the 
dynamics $\mathbb{L}_{\tau}$. 
Because $\mathbb{L}_{\tau}$ maps the Schur process
$\mathbb{P}[\vec 1\mid \rho_t]$ to
$\mathbb{P}[\vec 1\mid \rho_{e^{-\tau}t}]$
and $t\to+\infty$, we need to scale $\tau$ 
as 
$\tau=\uptau/t$
(here $\uptau\in \mathbb{R}_{>0}$ is the new scaled time which stays fixed).
Then $e^{-\uptau/t}\,t\sim \left( 1-\frac{\uptau}t \right)t=t-\uptau$.
Considering $\mathbb{L}_{\uptau/t}$ is equivalent to slowing down 
all the jump rates in $\mathbb{L}$ by the factor of $t$.
Since we are looking around level $N$ and $N$ grows proportionally
to $t$, the slowed down dynamics in the bulk
will have equal jump rates on all levels
at finite distance from the $N$-th one.

Therefore, under the bulk limit of $\mathbb{L}_{\tau}$, each vertical lozenge can move into 
one of the holes to the left of it (with the requirement that the interlacing is preserved),
at a constant rate \emph{per hole} (for simplicity, we can assume that this rate is equal to $1$).
See \Cref{fig:Gibbs_tilings} for an illustration.

Consider the combination of the dynamics
$\mathbb{L}_{\uptau l/t}$ and $\mathbb{R}_{\uptau r/t}$ 
running in parallel,\footnote{Here we are ignoring the issues with 
definitions of the continuous-time dynamics
outlined in \Cref{sub:q1_arrays,sub:iterated_R}.}
where $l,r>0$ are parameters.
In the bulk limit of this combination, 
we 
readily obtain the Hammersley-type process in the bulk with two-sided jumps.
This two-sided dynamics was introduced and studied in 
\cite{Toninelli2015-Gibbs},
where it was shown that this dynamics
preserves the ergodic Gibbs measures on tilings of the whole plane.
We see that our Markov maps $\mathbb{L}_{\tau}$ and $\mathbb{R}_\tau$
can be viewed as 
the pre bulk limit versions 
of the two-sided Hammersley-type
processes of 
\cite{Toninelli2015-Gibbs}.

\medskip

Let us now discuss connections to the push-block
dynamics of \cite{BorFerr2008DF}. For completeness, 
let us recall its definition:
\begin{definition}[Push-block dynamics]
	\label{def:kpz_growth}
	Each vertical lozenge has an independent exponential
	clock of rate 1. When the clock rings, the lozenge
	tries to move to the right by one. 
	If it is blocked by a vertical lozenge from below
	(see the square mark in \Cref{fig:Gibbs_tilings}),
	then the jump is suppressed.
	If there are vertical lozenges above the one moving, then they also get pushed 
	to the right by one.
\end{definition}

The one-sided particular case of the Hammersley-type processes
is the push-block dynamics, up to rotating the picture 
by $\pi/3$ and focusing on the yellow lozenges in \Cref{fig:Gibbs_tilings} instead of the 
vertical (gray) ones.

Thus, in the bulk limit \Cref{thm:L_action_q_to_1_gibbs_general}
informally turns into the
statement that one can run the 
one-sided Hammersley-type process and the push-block
dynamics (both in terms of the vertical lozenges),
and the resulting process preserves ergodic Gibbs measures.
This statement follows from 
\cite{Toninelli2015-Gibbs}, as well as 
its rather straightforward generalization given next:

\begin{proposition}
	Running six one-sided Hammersley-type processes in parallel,
	where each individual process moves one type of lozenges in one of the 
	directions $e^{\mathbf{i}\pi k/3}$, $0\le k\le 5$, 
	at a specified rate $\upalpha_k \ge 0$,
	preserves ergodic Gibbs measures on 
	tilings of the whole plane.
\end{proposition}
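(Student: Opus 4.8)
The plan is to argue at the level of infinitesimal generators and to reduce the statement to the two-sided, single-type case treated in \cite{Toninelli2015-Gibbs}. Write the generator of the combined process as $A=\sum_{k=0}^{5}\upalpha_k A_k$, where $A_k$ is the generator of the $k$-th one-sided Hammersley-type dynamics, which slides lozenges of the type attached to the direction $e^{\mathbf{i}\pi k/3}$ into the interlacing-admissible holes along that direction at rate $1$ per hole. The infinite-volume dynamics is well defined as a local Markov process exactly as in \cite{Toninelli2015-Gibbs} (under any ergodic Gibbs measure the relevant gaps between consecutive lozenges have light tails, so individual lozenges have a.s.\ finite total jump rate), and a translation-invariant probability measure $\pi$ is stationary for it precisely when $\sum_{\eta}\pi(\eta)\,(Af)(\eta)=0$ for every cylinder function $f$; see \cite{Toninelli2015-Gibbs,Liggett1985}. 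Since $A$ is a nonnegative linear combination of the $A_k$, it then suffices to show $\sum_{\eta}\pi(\eta)\,(A_k f)(\eta)=0$ for every $k$, every cylinder $f$, and every ergodic Gibbs measure $\pi$.

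Next I would use the symmetry of the triangular lattice to reduce the six processes to a single lozenge type. Rotation $R$ of the plane by $\pi/3$ sends lozenge tilings to lozenge tilings, conjugates lattice translations to lattice translations, and preserves the local uniform-Gibbs property; hence the pushforward $R_{*}$ maps the family of translation-invariant ergodic Gibbs measures bijectively onto itself, permuting the slope parameters. Each of the three lozenge types has exactly two opposite natural sliding directions, $R$ cyclically permutes the six (type, direction) pairs, and therefore $R$ conjugates $A_k$ to $A_{k+1}$ with indices modulo $6$. Consequently, once $\sum_{\eta}\pi(\eta)(A_0 f)(\eta)=0$ is known for every ergodic Gibbs $\pi$, conjugating with $R$ and $R^{2}$ gives the same for all $k$. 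Equivalently, it is enough to treat the three two-sided Hammersley-type dynamics (one per lozenge type), which are rotations of one another.

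Finally I would invoke \cite{Toninelli2015-Gibbs}: for a single lozenge type the two-sided Hammersley-type dynamics with left rate $l\ge0$ and right rate $r\ge0$ has generator $l\,A_{\mathrm{left}}+r\,A_{\mathrm{right}}$, and by that work every ergodic Gibbs measure $\pi$ is stationary for it for all $l,r\ge0$. Thus $l\sum_{\eta}\pi(\eta)(A_{\mathrm{left}}f)(\eta)+r\sum_{\eta}\pi(\eta)(A_{\mathrm{right}}f)(\eta)=0$ for all $l,r\ge0$ and all cylinder $f$, which forces each of the two sums to vanish separately. Combined with the rotational reduction above, this yields $\sum_{\eta}\pi(\eta)(A_k f)(\eta)=0$ for every $k$, and summing with the weights $\upalpha_k$ shows that $\pi$ is stationary for the combined six-process dynamics.

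The step I expect to be the main obstacle is the careful verification in the second paragraph: one must fix an identification of lozenge tilings of the plane with height functions (equivalently, with the interlacing-array picture) and check that $R_{*}$ genuinely preserves the set of ergodic Gibbs measures and that ``slide a lozenge of type $\tau$ in direction $\theta$'' transforms under $R$ into ``slide a lozenge of type $R\tau$ in direction $\theta+\pi/3$'', which carries all the combinatorial bookkeeping. By contrast, the well-definedness of the infinite-volume dynamics and the generator characterization of stationarity are standard and are borrowed from \cite{Toninelli2015-Gibbs,Liggett1985}, and the base case is quoted directly from \cite{Toninelli2015-Gibbs} rather than reproved. If \cite{Toninelli2015-Gibbs} is read as already covering all three lozenge types, the rotational-symmetry reduction may be dropped, and only the linearity-in-$(l,r)$ argument together with the summation over $k$ is needed.
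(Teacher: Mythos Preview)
The paper does not actually supply a proof of this proposition: it is stated immediately after the sentence ``This statement essentially follows from \cite{Toninelli2015-Gibbs}, as well as its rather straightforward generalization given next,'' and no argument is given. So there is no ``paper's own proof'' to compare against beyond the implicit claim that the result is a routine extension of \cite{Toninelli2015-Gibbs}.

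Your proposal is a perfectly reasonable way to make that implicit claim precise, and it matches the spirit of what the paper is gesturing at. The decomposition $A=\sum_k \upalpha_k A_k$, the reduction by the $\pi/3$ rotational symmetry of the triangular lattice, and the appeal to \cite{Toninelli2015-Gibbs} for the base case are exactly the ingredients one would expect. The paper even hints at the rotational reduction when it observes, just before the proposition, that ``the one-sided particular case of the Hammersley-type processes is essentially the push-block dynamics, up to rotating the picture by $\pi/3$.'' One small point to be careful about: your separation of $A_{\mathrm{left}}$ and $A_{\mathrm{right}}$ relies on \cite{Toninelli2015-Gibbs} establishing stationarity for the two-sided process with \emph{arbitrary} nonnegative rates $(l,r)$, not merely for some fixed pair; this is indeed the case there (and is also what the paper assumes when it writes ``$l,r>0$ are parameters''), but you should state explicitly which formulation of Toninelli's theorem you are invoking. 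With that caveat, your argument is sound and is essentially what the paper leaves to the reader.
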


\subsection{Branching graph perspective}
\label{sub:back_TASEP_and_branching_graph}

Recall that by $\mathcal{S}^c$ we denote the 
set of all interlacing arrays of infinite depth
which have many zeroes along the left border
(\Cref{def:issue_with_many_jumps_ok}).
Let us explain how the Markov maps $\mathbb{L}_{\tau}$
can be utilized to equip $\mathcal{S}^c\times \mathbb{R}$ with
a structure of an 
$\mathbb{R}$-graded
\emph{projective system}
in the sense of 
\cite{BorodinOlsh2011Bouquet}.
Projective systems
generalize branching graphs such as the Young graph, and the latter 
play a fundamental role in 
Asymptotic Representation Theory
\cite{VK81AsymptoticTheory},
\cite{borodin2016representations}.
The definitions and questions in this subsection 
are motivated by the connection to branching
graphs. 

\begin{remark}
	The set $\mathcal{S}^c\times \mathbb{R}$
	is ``larger'' than the more well-studied branching graphs.
	Namely, in the Young and Gelfand--Tsetlin graphs
	the vertices are indexed by Young diagrams and 
	signatures, respectively (a signature is a tuple
	$(\nu_1\ge \ldots\ge\nu_N )$, $\nu_i\in \mathbb{Z}$),
	while in $\mathcal{S}^c\times \mathbb{R}$ the 
	vertices are whole infinite collections of
	interlacing diagrams $\lambda^{(1)}\prec\lambda^{(2)}\prec\ldots $.
	This makes it hard to predict which properties
	of the Young and Gelfand--Tsetlin graphs
	could translate to $\mathcal{S}^c\times \mathbb{R}$.
\end{remark}

Let $M_s$, $s\in \mathbb{R}$, 
be probability measures on $\mathcal{S}$ supported by $\mathcal{S}^c$
(examples include the one-sided Schur measures as in \Cref{thm:L_action_q_to_1_gibbs_general}).
We call 
the family 
$\{M_s\}_{s\in \mathbb{R}}$ 
\emph{coherent} 
if for any $\tau\ge 0$ and $s\in \mathbb{R}$ we have
\begin{equation*}
	M_s\,
	\mathbb{L}_\tau 
	=M_{s-\tau}.
\end{equation*}
Coherent families are sometimes known as
\emph{entrance laws}, cf.
\cite{Dynkin1978entrance_laws}.
Clearly, coherent families form a 
convex set. Its extreme elements are, by definition,
those which cannot be represented as 
nontrivial convex combinations of other coherent families.

\begin{question}
	How to characterize extreme coherent
	families?
	Can every coherent family be represented
	in a unique way as a (continual) convex combination
	of the extremes?
\end{question}

Let us present an example of 
a coherent family based on 
Schur processes.
Take $M_s^{\mathrm{Schur}}=\mathbb{P}[\vec 1\mid \rho(s)]$, 
where $\rho(s)$ is a specialization with 
$\alpha_i^{\pm}(s)=\beta_i^-(s)=0$ for all $i$, $\gamma^-(s)=0$,
and other parameters
given by
\begin{equation*}
	\beta_i^+(s)=
	\frac{\beta_i^+ e^{s}}{1-\beta_i^++\beta_i^+e^{s}},\qquad 
	\gamma^+(s)=e^{s}\gamma^+,
\end{equation*}
where $\beta_i^{+}$ and $\gamma^{+}$ are fixed and satisfy 
\eqref{eq:alpha_pm_specialization}.
The fact that the family $\{ M_s^{\mathrm{Schur}} \}$ 
is indeed coherent follows
from \Cref{thm:L_action_q_to_1_gibbs_general}.

Let us discuss two particular examples.
\begin{itemize}
	\item 
		When $\gamma^+=1$ and all other parameters
		are zero, $M_s^{\mathrm{Schur}}$ is the family of single-time 
		distributions of the push-block dynamics
		under the 
		logarithmic time change
		$s=\log t$.
	\item 
		When $\beta_1^+=\beta\in(0,1)$ and
		all other parameters are zero, 
		the random interlacing array corresponding to $M_s^{\mathrm{Schur}}$
		has the form $\lambda^{(N)}=(1^{X_N}0^{N-X_N})$, 
		where $(X_1,X_2,\ldots )$ is the trajectory of the 
		simple random walk with steps $0,1$ taken with probabilities
		$1-\beta_1^+(s)$ and $\beta_1^+(s)$, respectively.
		The parameter $\beta_1^+(s)$ interpolates between $0$ and $1$ 
		at $s=-\infty$ and $s=+\infty$, respectively.
		The map $\mathbb{L}_\tau$ thus provides a coupling
		between these simple random walk trajectories
		with varying probability of up step.
		The concrete action of $\mathbb{L}_\tau$
		in this example
		leads to 
		\Cref{prop:Bernoulli_random_walk} formulated in the 
		Introduction.
\end{itemize}

\begin{question}
	Are the coherent families $\{ M_s^{\mathrm{Schur}}\}$
	extreme? Are there other interesting (extreme or 
	non-extreme) coherent families?
\end{question}

Let us focus on the case 
$\{M_s^{\mathrm{Schur}}\}$ with
$\gamma^+=1$ 
and all other parameters zero. 
The structure of a projective family
allows to define
for each $s\in \mathbb{R}$ the 
\emph{up-down Markov process}
on $\mathcal{S}^c$ which preserves each $M_s^{\mathrm{Schur}}$
(see \cite{Borodin2007}).
In more detail, the forward Markov generator is defined as
\begin{equation*}
	\mathbb{L}^{up}_{s,s+ds}(\boldsymbol \mu\to\boldsymbol\lambda)
	=
	\frac
	{M^{\mathrm{Schur}}_{s+ds}(\boldsymbol\lambda)}
	{M^{\mathrm{Schur}}_s(\boldsymbol\mu)}\;
	\mathbb{L}_{ds}(\boldsymbol\lambda\to \boldsymbol\mu),
	\qquad 
	\boldsymbol\lambda, \boldsymbol\mu\in \mathcal{S}^c.
\end{equation*}
One can check that this is not the same forward
evolution as the push-block 
generator (under any time change).
In particular, $\mathbb{L}_{s,s+ds}^{up}$
is time-inhomogeneous.
Therefore, the up-down Markov process 
arising from the branching graphs formalism
does not reduce 
(in restriction to the leftmost particles $\lambda^{(N)}_{N}$)
to the stationary
dynamics from \Cref{sec:equil_dyn}.

\begin{question}
	The up-down Markov chains 
	associated with distinguished non-extreme coherent families
	on well-studied branching
	graphs 
	converge to infinite-dimensional diffusions
	on the boundary
	(e.g., 
	\cite{Borodin2007},
	\cite{Petrov2007}). 
	Is there such a limit procedure 
	for the up-down processes associated
	with $\{M_s^{\mathrm{Schur}}\}$
	or other coherent families on $\mathcal{S}^c\times \mathbb{R}$?
\end{question}

Viewing $\mathcal{C}$ as a subset of $\mathcal{S}^c$,
one can similarly define the projective
system structure on $\mathcal{C}\times \mathbb{R}$
associated with the Markov maps $\mathbf{L}_\tau$
(\Cref{def:text_def_L_tau}). 
The restrictions of $\{M_s^{\mathrm{Schur}}\}$
form coherent families on $\mathcal{C}\times \mathbb{R}$,
and all the problems formulated in this subsection
also make sense for the 
smaller object $\mathcal{C}\times \mathbb{R}$.
Note that the up-down Markov chain on each floor $\mathcal{C}\times\{s\}$ 
with $\gamma^+=1$ (and all other parameters zero)
preserves the TASEP distribution $\upmu_{e^s}$, but is it not the same as the 
stationary dynamics
discussed in \Cref{sec:equil_dyn}.

\subsection{Lifting to additional parameters}
\label{sub:lifting_sHL}

The definition of the local Markov maps
$L^{(j)}_{\alpha}$ and $R^{(j)}_{\alpha}$
which randomly change a single level of an interlacing
array is inspired by the bijectivization of 
a degenerate case of
the Yang-Baxter
equation. Beyond this degenerate case 
associated with the Schur symmetric polynomials, 
the bijectivization can be developed to include
models associated with spin Hall-Littlewood 
or spin $q$-Whittaker symmetric functions
\cite{BufetovPetrovYB2017}, 
\cite{BufetovMucciconiPetrov2018}.
A scheme of symmetric functions 
is given in \Cref{fig:symm_functions_scheme}.

\begin{figure}[htpb]
	\centering
	\includegraphics{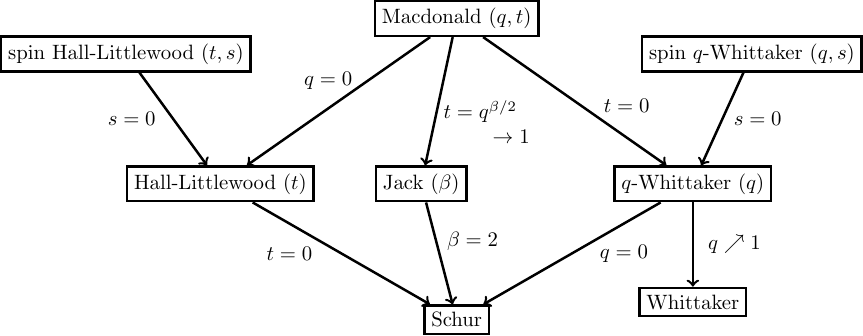}
	\caption{An hierarchy of symmetric functions.} 
	\label{fig:symm_functions_scheme}
\end{figure}

Let us consider three setups.
First, in the spin Hall-Littlewood case, the 
maps $L^{(j)}_\alpha$ and $R^{(j)}_\alpha$
can be obtained by considering sequences of 
local transitions given in
Figures 4 and 5 
in \cite{BufetovPetrovYB2017}
(see \Cref{sub:bijectivisation_mention} for more details).
Therefore, one can potentially define Markov 
maps preserving the class of probability
measures on interlacing arrays satisfying 
a version of the Gibbs property associated
with the spin Hall-Littlewood functions.
These Gibbs measures include the subclass of 
spin Hall-Littlewood processes. 
The Markov maps on the spin Hall-Littlewood processes could project 
(in a way similar to how $\mathbb{L}_\tau$ leads to $\mathbf{L}_\tau$)
into
maps acting nicely on distributions of the 
stochastic six-vertex model 
and the ASEP with step initial data.

\medskip

Second, on the spin $q$-Whittaker side the TASEP
is generalized 
to the $q$-TASEP
\cite{SasamotoWadati1998}, 
\cite{BorodinCorwin2011Macdonald}
and further to the $q$-Hahn TASEP
\cite{Povolotsky2013}, \cite{Corwin2014qmunu}.
A continuous-time
version of the $q$-Hahn TASEP 
can be found in 
\cite{Takeyama2014}, \cite{barraquand2015q}.

\begin{question}
	Do there exist Markov maps on 
	(spin) $q$-Whittaker processes
	mapping the time parameter in the $q$-TASEP or the 
	(continuous-time)
	$q$-Hahn TASEP backwards?
\end{question}

Finally, let us discuss
a setting which does not immediately fit into the scheme
of \Cref{fig:symm_functions_scheme}
but is also of interest. 
Configurations of the 
(not necessarily stochastic)
six-vertex model 
with the domain wall boundary conditions
(e.g., see \cite{reshetikhin2010lectures})
can be encoded as finite depth
interlacing arrays of strict partitions with 
fixed top row. 
The Yang-Baxter equation swapping spectral
parameters in this model can potentially be bijectivised
in the same way as in \cite{BufetovPetrovYB2017},
which should lead to 
Markov maps acting nicely on the distribution
of the six-vertex model. (In the Schur case this
is described in \Cref{sub:c_Gibbs_finite}.)

\begin{question}
	Can these Markov maps be taken to the continuous-time
	limit similarly to the $q\to1$ limit described in
	\Cref{sec:limit_q_1}? If this is possible, this would lead to 
	a new non-local sampling algorithm for the 
	distribution of the 
	homogeneous (i.e., with equal spectral parameters)
	six-vertex model with domain wall boundary
	conditions. The bulk limit of this latter algorithm should 
	presumably coincide with the Markov process
	from 
	\cite{BorodinBufetov2015}
	preserving the distribution of 
	the six-vertex model on a torus.
\end{question}

\bibliographystyle{alpha}
\bibliography{bib}

\end{document}